\newtheorem{theorem}{Theorem}[section]
\newtheorem{lemma}[theorem]{Lemma}
\newtheorem{proposition}[theorem]{Proposition}
\newtheorem{conjecture}[theorem]{Conjecture}
\newtheorem{question}[theorem]{Question}
\newtheorem{corollary}[theorem]{Corollary}
\theoremstyle{definition}
\newtheorem{definition}[theorem]{Definition}
\newtheorem{example}[theorem]{Example}
\newtheorem{remark}[theorem]{Remark}
\newtheorem{mainthm}{Theorem}
\numberwithin{equation}{section}
\newcommand\bba{\mathbb{A}} 
\newcommand\CC{{\mathbb{C}}}
\newcommand\PP{{\mathbb{P}}} 
\newcommand\QQ{{\mathbb{Q}}} 
\newcommand\RR{{\mathbb{R}}} 
\newcommand\ZZ{{\mathbb{Z}}}
\newcommand\B{\mathbf{B}} 
\newcommand\kk{\mathbf{k}}
\newcommand\OOO{{\mathscr{O}}}
\newcommand\OO{{\mathcal{O}}}
\newcommand\qb{\overline{\mathbb{Q}}}
\newcommand\Supp{{\rm Supp}\,}
\newcommand\tx{\widetilde{X}}
\DeclareMathOperator{\vol}{vol} 
\DeclareMathOperator{\codim}{codim} 
\DeclareMathOperator{\End}{End} 
\DeclareMathOperator{\Sing}{Sing} 
\begin{document}

	\title[Arithmetic degrees and Zariski dense orbits]
	{Arithmetic degrees and Zariski dense orbits of cohomologically hyperbolic maps} 
	
	\author[Yohsuke Matsuzawa]{Yohsuke Matsuzawa}
	\author[Long Wang]{Long Wang}
	\address{Department of Mathematics, Graduate School of Science, Osaka Metropolitan University, 3-3-138, Sugimoto, Sumiyoshi, Osaka, 558-8585, Japan}
    \email{matsuzaway@omu.ac.jp}

    \address{Shanghai Center for Mathematical Sciences, Fudan University, Jiangwan Campus, Shanghai, 200438, China; Graduate School of Mathematical Sciences, The University of Tokyo, 3-8-1 Komaba, Meguro-Ku, Tokyo 153-8914, Japan}
    \email{wanglll@fudan.edu.cn}

	\begin{abstract} 
		A dominant rational self-map on a projective variety is called $p$-cohomologically hyperbolic if the $p$-th dynamical degree is strictly larger than other dynamical degrees. For such a map defined over $\overline{\mathbb{Q}}$, we study lower bounds of the arithmetic degrees, existence of points with Zariski dense orbit, and finiteness of preperiodic points. In particular, we prove that, if $f$ is an $1$-cohomologically hyperbolic map on a smooth projective variety, then (1) the arithmetic degree of a $\overline{\mathbb{Q}}$-point with generic $f$-orbit is equal to the first dynamical degree of $f$; and (2) there are $\overline{\mathbb{Q}}$-points with generic $f$-orbit. Applying our theorem to the recently constructed rational map with transcendental dynamical degree, we confirm that the arithmetic degree can be transcendental. 
	\end{abstract}

	\subjclass[2020]{Primary 37P15; 
		Secondary 37P05, 
		37P30, 
		37P55 
	}	
	
	\keywords{arithmetic degree, dynamical degree, Zariski dense orbit}

	\maketitle
	
	\thispagestyle{empty}


	\section{Introduction}\label{sec_intr}
	
	Let $X$ be a smooth projective variety defined over $\qb$
	and $f \colon X \dashrightarrow X$ a dominant rational map.
	Let us fix an ample divisor $H$ on $X$.
	Then for $k = 0,1,\dots, \dim X$,
	the $k$-th dynamical degree $\delta_{k}(f)$ of $f$ is 
	\begin{align*}
		\delta_{k}(f) := \lim_{n \to \infty} \left( (f^n)^{\ast}(H^k)\cdot H^{\dim X - k} \right)^{1/n}.
	\end{align*}
	See Section \ref{sec_prel} for definitions and basic properties of dynamical degrees.
	These quantities measure geometric complexity of the map $f$.
	In particular, $\delta_{1}(f)$ measures how $(f^n)^*H$ is getting ``larger'' 
	as $n$ goes to infinity.
	The Kawaguchi--Silverman conjecture says that the growth rate of 
	the height function along a Zariski dense orbit would be equal to $\delta_{1}(f)$:
	\begin{align*}
		\alpha_f(x) := \lim_{n \to \infty} \max\{ 1, h_H(f^n(x))\}^{1/n} = \delta_1(f),
	\end{align*}
	where $h_H$ is a logarithmic Weil height function on $X$ associated with $H$.
	(Note that the existence of the limit is a part of the conjecture.)
	This quantity $\alpha_f(x)$ is called the arithmetic degree of $f$ at $x$.
	
	In this paper, we investigate arithmetic degrees of 
	$p$-cohomologically hyperbolic maps. 
	Also, as byproducts, we prove the existence of Zariski dense orbits
	and finiteness of preperiodic points for certain classes of rational maps.
    We say that the $f$-orbit of a point $x \in X(\qb)$ is \emph{well-defined}
    if $f^n(x)$ is not contained in the indeterminacy locus of $f$ for every
    $n \geq 0$, and we say that a well-defined orbit is \emph{generic} if 
    it is infinite and its intersection with any proper Zariski closed subset
    is finite.
	Among other results, we prove that, if $f$ is $1$-cohomologically hyperbolic, then
	\begin{itemize}
		\item $\alpha_f(x) = \delta_1(f)$ for each $\qb$-point $x$ with well-defined and generic $f$-orbit; 
		
		\item there exists a $\qb$-point whose $f$-orbit is well-defined and generic.
	\end{itemize}

	\subsection{Cohomologically hyperbolic maps}
	
	\begin{definition}\label{def_coh}
		Let $X$ be a projective variety over an algebraically closed field of characteristic zero. Let $f : X \dashrightarrow X$ be a dominant rational self-map. 
		We say $f$ is \textit{$p$-cohomologically hyperbolic}, if there is a positive integer $1\leq p \leq \dim X$ such that  
		\begin{equation*}
			\delta_p(f) > \delta_i(f) \ \text{ for all } i \in \{ 1, 2, \dots, \dim X \} \setminus \{p\}. 
		\end{equation*}
		We say $f$ is \textit{cohomologically hyperbolic} if it is \textit{$p$-cohomologically hyperbolic} for some $p$. 
	\end{definition}
	
	Notice that the cohomological hyperbolicity of $f$ implies that $\delta_1(f) > 1$ by (\ref{eq_fdd}) below; in particular, $f$ has infinite order. Moreover, for a birational self-map $f$, it is cohomologically hyperbolic if and only if $\delta_1(f) > 1$ when $\dim X = 2$;
	if and only if $\delta_1(f) \neq \delta_2(f)$ when $\dim X = 3$.

	The notion of cohomological hyperbolicity was originally proposed by Guedj \cite{Gu10} from the viewpoint of complex dynamical systems. This is a cohomological analogy of the hyperbolicity in smooth dynamical systems. Cohomologically hyperbolic self-maps are usually expected to have some particular and pretty dynamical properties, such as the existence of invariant measures and the equidistribution of periodic points. We refer to \cite{Gu10} for more background and discussions. The paper \cite{BHT22} provides a nice overview together with references for known results.

	\subsection{Kawaguchi--Silverman conjecture}
	
	Let $X$ be a projective variety over $\qb$. For a dominant rational self-map $f: X \dashrightarrow X$, we let $I(f) \subset X$ be the indeterminacy locus of $f$.
	\begin{definition}
		We write
		\begin{align*}
			X_f(\qb) = \{ x \in X(\qb) \mid \text{$f^i(x)\notin I(f)$ for all $i \geq 0$} \}.
		\end{align*}
		For each point $x \in X_f(\qb)$, we can define the forward $f$-orbit as
		\begin{align*}
			\OO_f(x) = \{f^i(x) \,:\, i \geq 0\}.
		\end{align*}
		When the ground field is an arbitrary field $\kk$, we make the same definition.
		By a result of Amerik \cite{Am11}, the set $X_f(\qb)$ is Zariski dense in $X$. 
	\end{definition}

	\begin{definition}[cf.\ \cite{KS16a, Si14}]
		Let $H$ be an ample Cartier divisor on $X$ and
		fix a logarithmic Weil height function $h_H \colon X(\qb) \longrightarrow \RR$.
		See, for example, \cite{BG06,HS00,Lan83} for the definition and the basic theory of Weil height functions.
		We set $h_H^+ = \max\{1, h_H\}$.
		For $x \in X_f(\qb)$, we define the \textit{lower} and the \textit{upper arithmetic degree} of $f$ at $x$ by 
		\[ \underline{\alpha}_f(x) = \liminf_{i\to \infty} h^{+}_H(f^i(x))^{1/i}, \]
		\[ \overline{\alpha}_f(x) = \limsup_{i\to \infty} h^{+}_H(f^i(x))^{1/i}. \]
		Both of these quantities are independent of the choice of ample divisor $H$ and height function $h_H$ (\cite[Proposition 12]{KS16a}). 
		When $\underline{\alpha}_f(x) = \overline{\alpha}_f(x)$, we write
		\[ \alpha_f(x) = \lim_{i\to \infty} h^{+}_H(f^i(x))^{1/i}
		\]
		and call it the arithmetic degree.
	\end{definition}

	The following is a conjecture of Kawaguchi and Silverman concerning the properties of arithmetic degrees, which is the main theme of this paper.

	\begin{conjecture}[{\cite[Conjecture 6]{KS16a}, see also \cite[Conjecture 40]{Si14}}]\label{ksc} Let $X$ be a smooth projective variety over $\qb$. Let $f : X \dashrightarrow X$ be a dominant rational self-map and $x \in X_f(\qb)$. Then the following hold.
		
		\medskip $(\mathrm{a})$ $\underline{\alpha}_f(x) = \overline{\alpha}_f(x)$. In other words, the limit 
		\[ \lim_{i\to \infty} h^{+}_H(f^i(x))^{1/i} \]
		defining $\alpha_f(x)$ exists.
		
		\medskip $(\mathrm{b})$ $\alpha_f(x)$ is an algebraic integer.
		
		\medskip $(\mathrm{c})$ The collection of arithmetic degrees
		\[ \{ \alpha_f(y) \,:\, y\in X_f(\qb) \} \]
		is a finite set.
		
		\medskip $(\mathrm{d})$ If the forward orbit $\OO_f(x)$ is Zariski dense in $X$, then
		\[ \alpha_f(x) = \delta_1(f). 
		\] 
	\end{conjecture}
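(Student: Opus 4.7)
The plan is to take part (d) as the principal content of the conjecture and to deduce (a), (b), (c) from it by induction on dimension. For (d), the upper bound $\oa_f(x) \le \delta_1(f)$ is already known in considerable generality via a functoriality-of-heights estimate, so the real task is the lower bound $\ua_f(x) \ge \delta_1(f)$ under the hypothesis that $\OO_f(x)$ is Zariski dense.

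My approach is a Tate-style construction of a canonical height adapted to $f$. First I would pass to a birational model $\pi : \tx \to X$ chosen so that the lift $\widetilde{f}$ is algebraically stable, meaning $(\widetilde{f}^n)^{\ast} = (\widetilde{f}^{\ast})^n$ on $\mathrm{N}^1(\tx)_\RR$. Using positivity of $\widetilde{f}^{\ast}$ on the nef cone together with a Perron--Frobenius argument, one expects to produce a nonzero nef $\RR$-Cartier class $D$ with $\widetilde{f}^{\ast}D \equiv \delta_1(f) \cdot D$. Setting
\[
\widehat{h}_f(x) = \lim_{n \to \infty} \delta_1(f)^{-n}\, h_D(\widetilde{f}^n(x))
\]
then yields a canonical height satisfying $\widehat{h}_f \circ f = \delta_1(f)\,\widehat{h}_f$ and $\widehat{h}_f = h_H + O(1)$. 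The lower bound in (d) would follow once we show $\widehat{h}_f(x) > 0$; this is expected whenever the forward orbit is Zariski dense, since the vanishing locus of $\widehat{h}_f$ should be contained in a countable union of proper $f$-invariant subvarieties.

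The main obstacles are twofold. First, algebraic stability is a serious restriction: in dimension two one can always achieve it by blowing up, but in higher dimension this can fail, so one must either work with an inverse system of models or replace $\widetilde{f}^{\ast}$ by the asymptotic operator $\lim_n ((f^n)^{\ast})^{1/n}$. Second, convergence of the Tate limit requires summable control of the defect $h_D(\widetilde{f}(x)) - \delta_1(f)\,h_D(x)$; the error terms arise from exceptional divisors and from sub-leading eigenvalues of $\widetilde{f}^{\ast}$, and this is precisely the step where the strict domination $\delta_1(f) > \delta_i(f)$ for $i \ne 1$ — that is, $1$-cohomological hyperbolicity — should do the essential work, by forcing all competing contributions to decay strictly faster than $\delta_1(f)^n$.

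For the remaining parts: granted (d), consider a point $x$ whose orbit is not Zariski dense, let $Y := \overline{\OO_f(x)}$, and restrict $f$ to a dominant rational self-map of $Y$; induction on $\dim X$ gives $\alpha_f(x) = \delta_1(f|_Y)$, yielding (a). Since $\delta_1(f|_Y)$ is the spectral radius of an integer linear action on $\mathrm{N}^1(Y)$, it is an algebraic integer, giving (b). Finally, (c) reduces to finiteness of the values of $\delta_1$ across $f$-invariant subvarieties of $X$, which should follow from Northcott-type boundedness applied to invariant subschemes of bounded degree.
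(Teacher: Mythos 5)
The statement you set out to prove is not a theorem of the paper but an open conjecture (the Kawaguchi--Silverman conjecture), and in fact the paper demonstrates that parts (b) and (c) are \emph{false} in general for dominant rational maps. Theorem~\ref{thm_trans_two} of the paper exhibits a dominant rational self-map $f\colon \PP^2\dashrightarrow\PP^2$ over $\qb$ and a point $x$ with $\alpha_f(x)=\delta_1(f)$ transcendental, disproving (b); and the introduction cites \cite{LS20} for a counterexample to (c). So any argument purporting to prove the full conjecture must contain an error, and it is worth locating yours precisely.

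The flaw is in your deduction of (b) from (d). You write that $\delta_1(f|_Y)$ ``is the spectral radius of an integer linear action on $N^1(Y)$, hence an algebraic integer.'' This identification only holds when the map is algebraically $1$-stable (e.g.\ for surjective endomorphisms, or for birational surface maps after Diller--Favre stabilization). For a general dominant rational self-map one cannot arrange algebraic stability, even birationally, even on $\PP^2$: the Bell--Diller--Jonsson examples recalled in Example~\ref{ex_bdj} are precisely dominant (non-birational) self-maps of $\PP^2$ whose first dynamical degree is transcendental, so it is not the spectral radius of \emph{any} integer matrix. Your canonical-height construction for (d) suffers from the same issue at an earlier stage: the Tate-limit argument needs a nef eigenclass $D$ with $\widetilde f^{*}D\equiv\delta_1 D$ on a stable model, and such a model need not exist. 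The paper avoids this entirely; its proof of the lower bound in the $1$-cohomologically hyperbolic case (Theorem~\ref{thm_lb_pch}, Corollary~\ref{thm_lb_onech}) goes through the bigness of the divisor $D_n=\delta_{p+1}^{-n}\phi^{*}H+\delta_{p-1}^{-n}\psi^{*}H-\varepsilon^n\delta_p^n\delta_{p-1}^{-n}\delta_{p+1}^{-n}\pi^{*}H$ on a resolution of $f^{2n}$ (Proposition~\ref{prop_bigness}), combined with the height inequality of Lemma~\ref{lem_tech} and Xie's adelic-topology/DML machinery, not through canonical heights. Secondarily, your reduction of (a) to (d) via restriction to $Y=\overline{\OO_f(x)}$ needs more care: $Y$ may be reducible (so one must pass to an iterate of $f$), $Y$ is generally singular, and one must check that $f|_Y$ is dominant on each component; these are handled in the paper by Lemmas~\ref{lem:comp-dyn-deg}, \ref{lem_jsxz3.2}, and \ref{lem_mmsz2.5}, but only under extra hypotheses that cannot be discharged in general. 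In short, (a) and (d) remain open, while (b) and (c) are known to fail, and the canonical-height route cannot repair this.
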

	
	In general, we have 
	\[ 1 \leq \underline{\alpha}_f(x) \leq \overline{\alpha}_f(x) \leq \delta_1(f), 
	\]
	where the first two inequalities are obvious, while the last one is non-trivial and is due to \cite[Theorem 1.4]{Ma20} and \cite[Proposition 3.11]{JSXZ21}. 
	In particular, if $\delta_1(f) = 1$, then $\alpha_f(x) = 1 = \delta_1(f)$ for all $x \in X_f(\qb)$. 
	Therefore, we are only interested in the case where $\delta_1(f) > 1$. Moreover, in order to show the existence of $\alpha_f(x)$ and the equality $\alpha_f(x) = \delta_1(f)$, it is enough to show the inequality $\underline{\alpha}_f(x) \geq \delta_1(f)$.

	When $f: X \to X$ is a surjective endomorphism, the first three claims (a), (b) and (c) were shown in \cite{KS16b}, while the last one (d) is still widely open. We refer to \cite{CLO22, MSS18, MMSZ20} and references therein for known results.

	When $f: X \dashrightarrow X$ is an arbitrary dominant rational self-map, much less is known. In fact, 
	there is a counterexample to Conjecture \ref{ksc}(c) due to \cite{LS20}.  
	Also as it turned out that the dynamical degree can be a transcendental number \cite{BDJ20}, 
	part (b) must fail for points with Zariski dense orbit for such maps if part (d) is correct. 
	Indeed, Theorem \ref{thm_trans_two} below gives the first counterexample to Conjecture \ref{ksc}(b). We refer to \cite{Wa22} and references therein for previous works towards claim (d), which is still open for birational self-maps of $\PP^2$. The main obstacle is the deficiency of functoriality of height functions (\cite[Theorem B.3.2.(b)]{HS00}) for an arbitrary rational map.

	\subsection{Main theorems}
	Before giving the statements of our main results, let us introduce two more notions. 
	
	\begin{definition}[{cf.\ \cite[Definition 2.4]{Xi22}}] 
    Let $X$ be a projective variety over an algebraically closed field 
    $\mathbf{k}$ of characteristic zero. 
    Let $f : X \dashrightarrow X$ be a dominant rational self-map, 
    and let $x \in X_{f}(\kk)$. 
    We say $(X, f, x)$ has the \textit{DML property} (short for ``dynamical Mordell--Lang''),
    if for every closed subvariety $Z \subset X$, the set 
		\begin{align}\label{dml-return-set}
			\left\{ n \in \ZZ_{\geq 0} : f^n(x) \in Z \right\} 
		\end{align}
		is a finite union of one-sided arithmetic progressions.
        Here an one-sided arithmetic progression is a set of the form
        $\{ ak + b \mid k \in \ZZ_{\geq 0} \}$ for some non-negative integers $a$ and $b$.
        Note that we allow $a=0$, that is, a set consisting of a single integer is allowed.
	\end{definition}
	
	The \textit{dynamical Mordell--Lang conjecture} (cf. \cite[Conjecture 1.2]{BGT10}) predicts that $(X, f, x)$ has the DML property for every $x \in X_{f}(\kk)$. Although this conjecture is open in general, it is known for \'etale dominant morphisms of quasi-projective varieties (\cite[Theorem 1.3]{BGT10}).

	\begin{definition}\label{def_generic} Let $X$ be a projective variety. A subset $\OO \subset X$ is called \textit{generic}, if $\OO$ is infinite and $\OO\cap Z$ is finite for every proper Zariski closed subset $Z \subsetneq X$.
	\end{definition}

	Clearly, a generic subset $\OO \subset X$ is Zariski dense in $X$. Conversely, the dynamical Mordell--Lang conjecture implies that a Zariski dense orbit $\OO_f(x)$ is generic in $X$. 
	
	The following is our main theorem, which generalizes results from \cite{Wa22}. 
	
	\begin{mainthm}\label{thm_lb_pch} Let $f: X \dashrightarrow X$ be a dominant rational self-map of a smooth projective variety $X$ defined over $\qb$. Assume that $f$ is $p$-cohomologically hyperbolic. 
		
		\smallskip $(1)$ For every $x \in X_f(\qb)$ with generic orbit $\OO_f(x)$, we have 
		\[ \underline{\alpha}_f(x) \geq \frac{\delta_p(f)}{\delta_{p-1}(f)}. 
		\]
		
		\smallskip $(2)$ There exists a sequence $\{x_i\}_{i\geq 1} \subset X_f(\qb)$ of $\qb$-points, such that $(X, f, x_i)$ has the DML property for each $i$, and the sequence $\{\underline{\alpha}_f(x_i)\}_{i\geq 1}$ of lower arithmetic degrees converges and satisfies 
		\[ \lim_{i\to \infty} \underline{\alpha}_f(x_i) \geq \frac{\delta_p(f)}{\delta_{p-1}(f)}.
		\]
	\end{mainthm}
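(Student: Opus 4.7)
The plan for Part (1) is to convert the sought lower bound on $\underline{\alpha}_f(x)$ into an intersection-theoretic estimate on a resolution of indeterminacy, then extract the correct growth rate via a Siu-type positivity argument tailored to the $p$-cohomological hyperbolicity. For each $n \ge 1$, pick a smooth birational modification $\pi_n\colon X_n\to X$ on which $f^n\circ\pi_n$ extends to a morphism $F_n\colon X_n\to X$. Functoriality of heights along the morphism $F_n$ then reduces the problem to producing a lower bound on $h_{F_n^{\ast} H}(\widetilde x_n)$, where $\widetilde x_n\in\pi_n^{-1}(x)$.

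The central step is to construct, for every $\epsilon > 0$ and every sufficiently large $n$, a numerical decomposition
\[
F_n^{\ast} H \;\equiv\; c_n\,\pi_n^{\ast} H + E_n,
\]
with $E_n$ effective on $X_n$ and $c_n^{1/n} \to \delta_p(f)/\delta_{p-1}(f) - \epsilon$. The coefficient $c_n$ is forced by the ratio
\[
\frac{((f^n)^{\ast}H)^p\cdot H^{d-p}}{((f^n)^{\ast}H)^{p-1}\cdot H^{d-p+1}},
\]
which grows like $(\delta_p/\delta_{p-1})^n$ thanks to the $p$-cohomological hyperbolicity together with the log-concavity of $k\mapsto ((f^n)^{\ast}H)^k\cdot H^{d-k}$. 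A generalized Siu inequality applied in degree $p$ (possibly after restricting to complete intersections cut out by generic members of $|H|$) then delivers the decomposition with Siu coefficient equal to this ratio, which is precisely why the output is $\delta_p/\delta_{p-1}$ and not $\delta_p$ alone. Passing back to heights yields $h_H(f^n(x)) \ge c_n\,h_H(x) - C_n$ whenever $\widetilde x_n \notin \Supp E_n$, and the generic-orbit hypothesis---combined with an inductive arrangement of the supports $\Supp E_n$ inside a countable union of proper subvarieties, each meeting the orbit finitely often---absorbs the error and yields $\underline{\alpha}_f(x)\ge \delta_p(f)/\delta_{p-1}(f)$ after taking $\liminf$ of $n$-th roots. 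The principal technical obstacle is precisely this uniform control of $\Supp E_n$ in $n$: because $f$ is only rational, $E_n$ cannot be obtained by pulling back a single fixed divisor, so the decomposition must be built inductively in a way that allows the generic-orbit hypothesis to absorb the cumulative error.

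For Part (2), where the generic-orbit hypothesis is replaced by a DML requirement at each $x_i$, the plan is to construct the $x_i$ via a specialization/reduction-mod-$\mathfrak{p}$ argument. At a prime of good reduction for which a suitable iterate of $f$ restricts to an \'etale morphism on an invariant open subset, one produces a Zariski-dense family of $\qb$-points in $X_f(\qb)$ to which the Bell--Ghioca--Tucker theorem applies, supplying the DML property at each. Since DML implies that the orbit meets every proper closed subvariety in a finite union of one-sided arithmetic progressions, a diagonal selection of $x_i$ along a filtration of the relevant bad loci then allows a modified form of the argument of Part (1) to run at each $x_i$, yielding $\lim_{i\to\infty}\underline{\alpha}_f(x_i)\ge \delta_p(f)/\delta_{p-1}(f)$.
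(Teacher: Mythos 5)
Your plan for Part (1) contains a genuine gap at the central step. You propose a two-term decomposition $F_n^{\ast}H \equiv c_n\,\pi_n^{\ast}H + E_n$ with $E_n$ effective (or pseudo-effective) and $c_n^{1/n}$ tending to $\delta_p/\delta_{p-1}$. But Siu's inequality on $X_n$ forces $c_n$ to be of size roughly
\[
\frac{\bigl(F_n^{\ast}H^N\bigr)}{N\bigl(F_n^{\ast}H^{N-1}\cdot \pi_n^{\ast}H\bigr)} = \frac{\deg_{N,H}(f^n)}{N\,\deg_{N-1,H}(f^n)},
\]
whose $n$-th root tends to $\delta_N/\delta_{N-1}$. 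By the Khovanskii--Teissier inequalities, $k \mapsto \deg_{k,H}(f^n)/\deg_{k-1,H}(f^n)$ is non-increasing, so $\delta_N/\delta_{N-1}$ is the \emph{smallest} of these ratios, and you cannot reach $\delta_p/\delta_{p-1}$ for $p<N$ from a two-term divisor decomposition on $X_n$. Your parenthetical ``generalized Siu inequality applied in degree $p$, possibly after restricting to complete intersections'' does not close this gap: the degree-$p$ Siu-type estimate ([Da20, Cor.~3.4.5]) produces a pseudo-effective \emph{codimension-$p$ cycle class}, not an effective divisor, and restricting to a $(p+1)$-dimensional complete intersection $Y$ gives a decomposition valid only on $Y$, which is useless for controlling heights along an orbit that does not lie on $Y$. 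You would also need to control which iterates $f^{jn}(x)$ avoid $\Supp E_n$, a set depending on $n$, and iterate the one-step inequality consistently; genericity of the orbit handles this part, but the bound you would get is still wrong for $p<N$.

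The paper circumvents exactly this obstacle by replacing the two-term height inequality with a \emph{three-term} one involving $h_H(z)$, $h_H(f^n(z))$, and $h_H(f^{2n}(z))$ (Proposition \ref{prop_hinq} and Lemma \ref{lem_tech}), built from the bigness of the $\RR$-divisor
\[
D_n = \frac{1}{\delta_{p+1}^n}\phi^{\ast}H + \frac{1}{\delta_{p-1}^n}\psi^{\ast}H - \frac{\varepsilon^n\delta_p^n}{\delta_{p-1}^n\delta_{p+1}^n}\pi^{\ast}H
\]
on a resolution of $f^{2n}$. Its proof of bigness (Proposition \ref{prop_bigness}) expands $\vol(D'_n)$, invokes the higher-codimension Siu-type bound of [Da20] precisely where you wanted ``Siu in degree $p$'', and then uses the log-concavity estimate Lemma \ref{lemma:ineq-of-dynamical-degrees} to compare the resulting $n$-th root limits. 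The cohomological hyperbolicity $\delta_p > \delta_{p\pm 1}$ enters exactly to verify the hypothesis $\zeta > 1/d_1 + 1/d_2$ of Lemma \ref{lem_tech}, and the output $\zeta d_1/2 = \varepsilon^n\delta_p^n/(2\delta_{p-1}^n)$ feeds into $\underline\alpha_{f^n}(y)$ and, after taking $n$-th roots, yields the desired $\delta_p/\delta_{p-1}$. The three-term structure is essential; without the middle term weighted against both $\phi^{\ast}H$ and $\psi^{\ast}H$, log-concavity cannot be brought to bear, and you are stuck with $\delta_N/\delta_{N-1}$.

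Your Part (2) also has a gap. A general dominant rational self-map need not have an iterate that becomes an \'etale morphism on an invariant open subset, so the Bell--Ghioca--Tucker theorem (which requires an \'etale morphism) does not apply, and the reduction-mod-$\mathfrak p$ specialization you sketch does not supply DML points. The paper instead uses Xie's results [Xi22, Propositions 3.24 and 3.27] (Proposition \ref{prop_xie}): for an arbitrary dominant rational self-map of infinite order, there is a non-empty adelic open set of points $x \in X_f(\qb)$ with infinite orbit contained in any prescribed non-empty Zariski open $U$, satisfying the DML property. Combined with the open sets $U_n$ from the bigness of $D_n$ and the same three-term inequality, this gives the sequence $\{x_i\}$. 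So the skeleton of Part (2) — find DML points avoiding the bad locus of $D_n$ and run Part (1)'s estimate — is right in spirit, but the DML-producing mechanism must be Xie's, not BGT's.
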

	
	\begin{remark}\label{rmk:existence-points-adelic-open-ver}
        In Theorem \ref{thm_lb_pch} (2), we do not claim that the orbits of $x_i$'s are disjoint.
		What we actually prove is that for an arbitrary $\epsilon>0$, there is a non-empty adelic open subset $A \subset X(\qb)$ such that
		\begin{align*}
			\underline{ \alpha}_f(x) \geq \frac{\delta_p(f)}{\delta_{p-1}(f)} -\epsilon
		\end{align*}
		for all $x \in A$.
		Here ``adelic open subset'' is in the sense of \cite[Section 3]{Xi22}.
        The formal definition of adelic topology involves quite a lot.
        Roughly speaking, for each embedding of $\qb$ into
        $\CC_p$ for any $p \leq \infty$, we consider the inverse image $U$ of
        $p$-adic open subsets to $X(\qb)$. 
        Then take the union $V$ of all Galois conjugates of $U$.
        These $V$ as well as the images of such sets by flat morphisms from 
        other varieties are the basis of the adelic topology.
        (To make sense this definition, we need to care about the
        different choices of models of $X$ over a number field.)
        It is worth to note that two non-empty adelic open sets of an irreducible variety have a non-empty intersection. Therefore, the adelic topology allows us to combine results obtained by
        $p$-adic analytic methods for finitely many different $p$.
        Also we can use the terminology ``general" for adelic opens
        in the same way as for Zariski opens.
	\end{remark}
	
	In the proof of Theorem \ref{thm_lb_pch}, we use Xie's result 
	(\cite[Propositions 3.24 and 3.27]{Xi22}) to find points with DML property.
	This is one of new ingredients we use after \cite{Wa22}.
	As a direct corollary, we have

	\begin{corollary}\label{thm_lb_onech} 
        Let $f: X \dashrightarrow X$ be a dominant rational self-map of a smooth projective variety $X$ defined over $\qb$. Assume that $f$ is $1$-cohomologically hyperbolic. 
		\smallskip $(1)$ For every $x \in X_f(\qb)$ with generic orbit $\OO_f(x)$,  $\alpha_f(x)$ exists and $\alpha_f(x) = \delta_1(f)$. 
		\smallskip $(2)$ There exists a sequence $\{x_i\}_{i\geq 1} \subset X_f(\qb)$ of $\qb$-points, such that $(X, f, x_i)$ has the DML property for each $i$, and the sequence $\{\underline{\alpha}_f(x_i)\}_{i\geq 1}$ of lower arithmetic degrees converges to $\delta_1(f)$: 
		\[ \lim_{i\to \infty} \underline{\alpha}_f(x_i) = \delta_1(f). 
		\]
	\end{corollary}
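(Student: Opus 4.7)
The plan is to deduce Corollary \ref{thm_lb_onech} as a direct specialization of Theorem \ref{thm_lb_pch} to the case $p=1$. The only numerical ingredient is the identity $\delta_0(f) = 1$, which follows immediately from the definition of the dynamical degree by taking $k=0$: since $(f^n)^{\ast}(H^0) \cdot H^{\dim X} = H^{\dim X}$ for every $n$, we have $\delta_0(f) = \lim_{n\to\infty}(H^{\dim X})^{1/n} = 1$. Consequently the ratio $\delta_p(f)/\delta_{p-1}(f)$ appearing in Theorem \ref{thm_lb_pch} simplifies to $\delta_1(f)/\delta_0(f) = \delta_1(f)$.

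For part (1), I would combine Theorem \ref{thm_lb_pch}(1), which yields $\underline{\alpha}_f(x) \geq \delta_1(f)$ for any $x \in X_f(\qb)$ whose forward orbit is generic, with the universal upper bound $\overline{\alpha}_f(x) \leq \delta_1(f)$ recalled in the introduction (due to \cite[Theorem 1.4]{Ma20} and \cite[Proposition 3.11]{JSXZ21}). Since $\underline{\alpha}_f(x) \leq \overline{\alpha}_f(x)$ by definition, the two bounds force
\[
\underline{\alpha}_f(x) = \overline{\alpha}_f(x) = \delta_1(f),
\]
so the limit defining $\alpha_f(x)$ exists and equals $\delta_1(f)$.

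For part (2), I would apply Theorem \ref{thm_lb_pch}(2) with $p=1$ to obtain a sequence $\{x_i\}_{i\geq 1} \subset X_f(\qb)$ such that $(X,f,x_i)$ has the DML property for each $i$ and $\lim_{i\to\infty} \underline{\alpha}_f(x_i) \geq \delta_1(f)$. Since every term satisfies $\underline{\alpha}_f(x_i) \leq \delta_1(f)$ by the same universal upper bound, the limit must in fact equal $\delta_1(f)$. There is essentially no obstacle here: the substantive work is packaged into Theorem \ref{thm_lb_pch}, and the corollary is a formal consequence obtained by noting $\delta_0(f)=1$ and sandwiching with the known upper bound on arithmetic degrees.
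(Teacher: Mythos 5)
Your proposal is correct and follows the paper's proof exactly: specialize Theorem \ref{thm_lb_pch} to $p=1$, use $\delta_0(f)=1$, and sandwich against the universal upper bound $\overline{\alpha}_f(x)\leq\delta_1(f)$ from Proposition \ref{prop_jsxz3.11}. Your brief justification in part (2) of why the inequality $\geq$ becomes equality (each term already obeys $\underline{\alpha}_f(x_i)\leq\delta_1(f)$) is a minor elaboration of what the paper leaves implicit, but the argument is identical in substance.
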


	As an application of Corollary \ref{thm_lb_onech}, we study the following so-called Zariski dense orbit conjecture for dominant rational self-maps. The argument here is inspired by \cite[Section 8]{JSXZ21}. 
	
	\begin{conjecture}\label{conj_zdo} Let $X$ be a projective variety over an algebraically closed field $\mathbf{k}$ of characteristic zero, and let $f : X \dashrightarrow X$ be a dominant rational self-map. If every $f$-invariant rational function on $X$ is constant, then there exists $x \in X_f(\kk)$ whose orbit $\OO_f(x)$ is Zariski dense in $X$.
	\end{conjecture}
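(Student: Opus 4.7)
The plan is to combine the arithmetic machinery developed earlier in this paper with a classical geometric analysis of orbit closures. I first reduce to the case $\kk = \qb$ and $X$ smooth projective: spread $(X,f)$ out to a model over a $\qb$-variety $S$ and specialize at a general closed point $s \in S$. The hypothesis that the $f$-invariant rational function field equals $\kk$ is constructible and survives generic specialization, so $\kk(X_s)^{f_s} = \qb$; a Zariski dense $f_s$-orbit at a $\qb$-point of $X_s$ then lifts to a Zariski dense $f$-orbit over $\kk$. Passing to a resolution of singularities preserves both the hypothesis and the conclusion, so we may assume $X$ is smooth projective over $\qb$.

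Next I split on cohomological hyperbolicity. If $f$ is $p$-cohomologically hyperbolic for some $p$, apply Theorem \ref{thm_lb_pch}(2) in the adelic-open form of Remark \ref{rmk:existence-points-adelic-open-ver}: for any small $\epsilon > 0$ with $\delta_p(f)/\delta_{p-1}(f) - \epsilon > 1$, there is a non-empty adelic open $A \subset X(\qb)$ such that every $x \in A$ satisfies $\underline{\alpha}_f(x) > 1$ and $(X,f,x)$ has DML. The height bound forces each such $x$ to be non-preperiodic, and a careful combination of DML with the adelic density of $A$ (in the style of \cite{Xi22}, controlling the contribution of proper $f$-invariant subvarieties on which $f$ has strictly smaller first dynamical degree) yields an $x \in A$ whose orbit is generic in the sense of Definition \ref{def_generic}, hence Zariski dense. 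This settles the cohomologically hyperbolic regime.

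When $f$ is not cohomologically hyperbolic, the arithmetic input of this paper gives no useful lower bound and I argue by contrapositive using the geometric approach of \cite[Section 8]{JSXZ21} and Amerik--Campana. Assume no $\qb$-point has Zariski dense orbit; then for every $x \in X_f(\qb)$ the orbit closure $V_x$ is a proper $f$-invariant subvariety of $X$, and by Amerik's theorem the $V_x$ cover $X$ Zariski densely as $x$ varies. Parametrizing these orbit closures in the relevant components of the Chow variety of $X$ and applying a constructibility argument, one extracts an $f$-equivariant dominant rational map $\pi : X \dashrightarrow Y$ with $\dim Y \geq 1$. Then $\pi^{\ast} \kk(Y) \subset \kk(X)$ consists of non-constant $f$-invariant rational functions, contradicting the hypothesis and closing the argument.

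The main obstacle is the Chow-variety step in the non-hyperbolic case: packaging a highly non-uniform family of orbit closures (whose dimensions and moduli can jump along strata) into a single $f$-equivariant algebraic fibration. This is the substantive content of the conjecture in full generality, and existing approaches either restrict to special dynamical classes (polarized endomorphisms, surfaces, split polynomial maps, and so on) or add geometric hypotheses. The contribution of the present paper is precisely to sidestep this geometric extraction in the cohomologically hyperbolic regime: heights, dynamical Mordell--Lang, and the adelic topology directly manufacture a $\qb$-point with Zariski dense orbit, with no need to analyze the ambient family of $f$-invariant subvarieties.
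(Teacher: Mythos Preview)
The statement you are attempting to prove is labeled \emph{Conjecture} in the paper precisely because it is open in the generality stated; the paper does not prove it. What the paper establishes are special cases: Theorem \ref{thm_zdo} for $1$-cohomologically hyperbolic maps over $\qb$, and Theorem \ref{thm_zdo_three} for cohomologically hyperbolic birational self-maps of threefolds. Your proposal is therefore not a proof but a sketch whose two branches each contain the open content of the conjecture.

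In the non-hyperbolic branch you yourself identify the gap: the ``Chow-variety step'' extracting an $f$-equivariant fibration from a covering family of proper orbit closures is exactly what is not known in general, and your final paragraph concedes this. That branch is not a proof; it is a restatement of why the conjecture is hard.

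In the cohomologically hyperbolic branch there is also a real gap. Having $\underline{\alpha}_f(x) > 1$ and DML for $x$ in a non-empty adelic open does not, by any argument in this paper or in \cite{Xi22}, directly force $\OO_f(x)$ to be Zariski dense for general $p$. The paper's actual mechanism (Proposition \ref{prop:exist-zdo-general-case}) is different: one compares $\underline{\alpha}_f(x)$ with the first dynamical degree of the restriction $f^k|_Z$ to an irreducible component $Z$ of $\overline{\OO_f(x)}$, and then invokes Lemma \ref{lem:comp-dyn-deg} to bound $\delta_1(f^k|_Z)$ by $\delta_{1+\codim Z}(f)^k$. This yields only $\codim_X \overline{\OO_f(x)} \leq l_0$, which is zero precisely when $p=1$. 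For $p \geq 2$ the paper gets Zariski density only in the threefold birational case, by combining $l_0 \leq 1$ with Cantat's finiteness of totally invariant hypersurfaces. Your vague appeal to ``controlling the contribution of proper $f$-invariant subvarieties on which $f$ has strictly smaller first dynamical degree'' does not supply this comparison, and in fact the restriction need not have smaller $\delta_1$ (Lemma \ref{lem_jsxz3.2} gives only $\delta_1(f|_Z) \leq \delta_1(f)$). Finally, the reduction from arbitrary $\kk$ to $\qb$ is not as routine as you suggest: lifting a Zariski dense orbit from a specialization back to the generic fibre requires care and is not addressed in the paper.
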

	
	We refer to \cite[Introduction]{Xi22} for the history of this conjecture and the state of the art. We note that a cohomologically hyperbolic dominant rational self-map does not admit invariant rational functions by the product formula of dynamical degrees (see Lemma \ref{lem_no_invariant_fib}). 
	So cohomological hyperbolicity should be enough for the existence of Zariski dense orbit.
	For $1$-cohomologically hyperbolic maps, we can confirm it:
	
	\begin{mainthm}\label{thm_zdo}
		Let $X$ be a smooth projective variety defined over $\qb$.
		Let $f \colon X \dashrightarrow X$ be a $1$-cohomologically hyperbolic
		dominant rational self-map.
		Then there is $x \in X_f(\qb)$ such that the $f$-orbit $\OO_f(x)$
		is Zariski dense in $X$. Moreover, we can take such $x$ so that $\alpha_f(x)$ exists and $\alpha_f(x) = \delta_1(f)$. 
	\end{mainthm}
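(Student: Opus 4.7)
The plan is to apply the adelic-open strengthening of Corollary \ref{thm_lb_onech}(2) (see Remark \ref{rmk:existence-points-adelic-open-ver}), use a Baire-category argument in the adelic topology to find a $\qb$-point in it with Zariski dense orbit, convert Zariski density to genericity via DML, and conclude by Corollary \ref{thm_lb_onech}(1).

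Concretely, for each $\epsilon>0$, Remark \ref{rmk:existence-points-adelic-open-ver} supplies a non-empty adelic open $A_\epsilon\subseteq X(\qb)$ with $\underline{\alpha}_f(x)\geq \delta_1(f)-\epsilon$ for all $x\in A_\epsilon\cap X_f(\qb)$. Running the same Xie-based construction (\cite[Propositions 3.24, 3.27]{Xi22}), one should moreover arrange that every such $x$ has the DML property. Choose $\epsilon$ small enough that $\delta_1(f)-\epsilon>1$; then every orbit coming from $A_\epsilon$ is infinite.

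The key geometric input is that the bad set
\[
B \;=\; \bigl\{\, x \in X_f(\qb) : \overline{\OO_f(x)} \subsetneq X \,\bigr\}
\]
is contained in a countable union of proper Zariski closed subsets of $X$: for $x\in B$ the orbit closure $\overline{\OO_f(x)}$ is a proper subvariety of $X$ defined over a number field (obtained from the fields of definition of $x$ and $f$), and on a fixed projective $X$ there are only countably many proper subvarieties defined over some number field. By the Baire-type property of the adelic topology on an irreducible projective variety over $\qb$ (cf.\ \cite[Section 3]{Xi22}), a non-empty adelic open set is never contained in a countable union of proper Zariski closed subsets, so $A_\epsilon$ contains a $\qb$-point $x$ outside $B$.

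This $x$ has $\overline{\OO_f(x)}=X$, satisfies $\underline{\alpha}_f(x)>1$, and has DML. If some proper $Z\subsetneq X$ contained infinitely many orbit points, DML would yield integers $a\geq 1$ and $j\geq 0$ with $\overline{\OO_{f^a}(f^j(x))}\subseteq Z\subsetneq X$; decomposing $X=\overline{\OO_f(x)}=\bigcup_{c=0}^{a-1}\overline{\OO_{f^a}(f^c(x))}$ and using irreducibility of $X$, some index $c_\ast$ must satisfy $\overline{\OO_{f^a}(f^{c_\ast}(x))}=X$, whence an appropriate positive iterate of the dominant rational self-map $f$ would send $X$ into the proper subvariety $\overline{\OO_{f^a}(f^j(x))}$, a contradiction. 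Hence $\OO_f(x)$ is generic in the sense of Definition \ref{def_generic}, and Corollary \ref{thm_lb_onech}(1) yields $\alpha_f(x)=\delta_1(f)$.

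The principal obstacle will be the simultaneous adelic-open arrangement of the lower bound on $\underline{\alpha}_f$ and the DML property on a single non-empty adelic open set; this should be obtainable by threading Xie's Propositions 3.24 and 3.27 through the proof of Corollary \ref{thm_lb_onech}(2) so that the DML property is produced for every point of the adelic open constructed there. Granted that input, the remaining Baire-category plus DML argument is essentially formal.
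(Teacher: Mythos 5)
Your outline (lower bound on an adelic open, find a dense orbit, DML $\Rightarrow$ generic, apply Corollary \ref{thm_lb_onech}(1)) matches the paper's in shape, and your DML step (Zariski dense orbit plus DML implies generic) is correct and parallels the end of the paper's Proposition \ref{prop:exist-zdo-general-case}. The genuine gap is the Baire-category step. You assert that a non-empty adelic open set is never contained in a countable union of proper Zariski closed subsets; this is false. Since $X(\qb)$ is countable, \emph{every} subset of $X(\qb)$---in particular any adelic open---is contained in the countable union of singletons $\bigcup_{y \in X(\qb)}\{y\}$. So your observation that the bad set $B$ is covered by the countable family of orbit closures $\overline{\OO_f(x)}$ carries no information, and no Baire-type property of the adelic topology (none such appears in \cite[Section 3]{Xi22}, and none can, for the reason just given) is available to extract a $\qb$-point of $A_\epsilon$ outside $B$.

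The paper's proof, via Proposition \ref{prop:exist-zdo-general-case}, makes the arithmetic-degree lower bound \emph{produce} density rather than merely confirm it afterwards. It arranges the adelic open $A$ so that for $x \in A$ the orbit stays in a fixed open $U$ on which $f$ restricts to a quasi-finite morphism, DML holds, and $\underline{\alpha}_f(x) \geq \delta_1(f) - \epsilon$; it then picks a periodic irreducible component $Z$ of $\overline{\OO_f(x)}$ of maximal dimension whose generic point lies in $U$, and runs the chain $(\delta_1(f)-\epsilon)^k \leq \underline{\alpha}_f(x)^k = \underline{\alpha}_{f^k|_Z}(f^s(x)) \leq \delta_1(f^k|_Z) \leq \delta_{1+\codim Z}(f)^k$, using Lemmas \ref{lem_ks13}, \ref{lem_mmsz2.5}, Proposition \ref{prop_jsxz3.11}, and crucially Lemma \ref{lem:comp-dyn-deg}. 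For $p=1$ and $\epsilon$ small this forces $\codim Z = 0$, i.e.\ $\overline{\OO_f(x)} = X$. This dynamical-degree comparison on the orbit-closure component is the ingredient your argument is missing.
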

	\begin{remark} The first claim is a generalization of \cite[Theorem 1.12(3)]{JSXZ21} where $f$ is assumed to be a morphism. See Proposition \ref{prop:exist-zdo-general-case} for a more general statement. 
	\end{remark}

	The following is a partial generalization of \cite[Theorem 1.12(2)]{JSXZ21} where $f$ is an automorphism of a projective threefold with first dynamical degree greater than one. 
	
	\begin{mainthm}\label{thm_zdo_three} Let $f: X \dashrightarrow X$ be a cohomologically hyperbolic birational self-map of a smooth projective threefold $X$ defined over $\qb$. 
		Then there is a $\qb$-point $x \in X_f(\qb)$ such that the $f$-orbit $\OO_f(x)$ 
		is Zariski dense in $X$. 
	\end{mainthm}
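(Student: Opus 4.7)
The plan is to split into two cases depending on which of $\delta_1(f)$ or $\delta_2(f)$ strictly dominates. Since $f$ is birational on a threefold, we have $\delta_0(f) = \delta_3(f) = 1$ and, by the log-concavity of dynamical degrees, $\delta_1(f), \delta_2(f) \geq 1$. Consequently, cohomological hyperbolicity forces either $\delta_1(f) > \delta_2(f)$ (so $f$ is $1$-cohomologically hyperbolic) or $\delta_2(f) > \delta_1(f)$ (so $f$ is $2$-cohomologically hyperbolic). In the first case Theorem \ref{thm_zdo} applied directly to $f$ suffices. In the second case, the identity $\delta_k(f) = \delta_{3-k}(f^{-1})$ for birational maps on smooth projective threefolds shows that $f^{-1}$ is $1$-cohomologically hyperbolic, so Theorem \ref{thm_zdo} applied to $f^{-1}$ produces points $y \in X_{f^{-1}}(\qb)$ with Zariski dense backward $f^{-1}$-orbit. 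The remaining task is to convert such a backward orbit into a Zariski dense forward $f$-orbit.

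To do this I would first use the adelic-open refinement of Theorem \ref{thm_zdo} inherited from Remark \ref{rmk:existence-points-adelic-open-ver} to realize the above set of ``good'' backward-orbit points as a non-empty adelic open subset $A_1 \subset X(\qb)$. I then intersect $A_1$ with the adelic open sets encoding (i) $y \in X_f(\qb)$, i.e., the forward $f$-orbit of $y$ is well-defined, and (ii) $f^{-n}(y) \notin I(f)$ for every $n \geq 0$. Condition (i) comes from an adelic refinement of Amerik's theorem applied to $f$, while condition (ii) follows by applying the analogous $p$-adic indeterminacy-avoidance statement for $f^{-1}$ to the proper closed subset $I(f) \subset X$. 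Since non-empty adelic open subsets of an irreducible variety always intersect (Remark \ref{rmk:existence-points-adelic-open-ver}), there exists $y$ satisfying all three properties simultaneously.

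Set $y_n := f^{-n}(y)$. The three conditions on $y$ ensure that $f(y_{n+1}) = y_n$ for every $n \geq 0$, that $y_N \in X_f(\qb)$ for every $N \geq 0$, and that
\[
\OO_f(y_N) = \{y_N, y_{N-1}, \ldots, y_0\} \cup \OO_f(y).
\]
In particular $\OO_f(y_N) \subseteq \OO_f(y_{N+1})$, so the Zariski closures $W_N := \overline{\OO_f(y_N)}$ form an ascending chain of closed subsets of the Noetherian space $X$, which must stabilize at some $W_{N_0}$. This $W_{N_0}$ then contains $y_n$ for every $n \geq 0$, i.e., the entire $f^{-1}$-orbit of $y$; but that orbit is Zariski dense by construction, forcing $W_{N_0} = X$. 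Therefore $x := y_{N_0}$ satisfies $\overline{\OO_f(x)} = X$, completing the proof. The main obstacle I anticipate is verifying that the three required conditions on $y$---Zariski density of the $f^{-1}$-orbit together with indeterminacy-avoidance for both $f$ and $f^{-1}$ along the two-sided orbit---can be packaged simultaneously as a non-empty adelic open set; once this is done, the Noetherian stabilization step is a short topological observation.
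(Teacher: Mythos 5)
Your proof is correct, but it takes a genuinely different route from the paper's in the case $\delta_2(f) > \delta_1(f)$. The paper observes that a cohomologically hyperbolic map admits no invariant rational function (Lemma \ref{lem_no_invariant_fib}), invokes Cantat's theorem that such a map has only finitely many totally invariant hypersurfaces, and then combines Propositions \ref{prop:exist-zdo-general-case} (applied with $p=2$, so $l_0 = 1$) and \ref{prop_xie} to produce a point $x$ whose orbit avoids that finite union of hypersurfaces and whose orbit closure has codimension at most one; density follows. You instead exploit birationality more directly through the duality $\delta_k(f) = \delta_{N-k}(f^{-1})$, reducing the second case to the already-proved Theorem \ref{thm_zdo} for $f^{-1}$, and then transfer density of a backward orbit to density of a forward orbit by the Noetherian stabilization of $\overline{\OO_f(f^{-N}(y))}$. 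The intersection of the adelic-open conditions you list (backward orbit Zariski dense and avoiding $I(f) \cup I(f^{-1})$, forward orbit well-defined) is indeed non-empty by Remark \ref{rmk:existence-points-adelic-open-ver}/\ref{rmk:adelic-open} and Proposition \ref{prop_xie}, and the bookkeeping $f(y_{n+1}) = y_n$ is justified precisely because $y_{n+1}\notin I(f)$ and $y_n \notin I(f^{-1})$ force the graph fibers to be singletons. Your route is arguably cleaner and more self-contained (it avoids appealing to Cantat's external result and the slightly delicate argument that a codimension-one orbit closure would have to be a totally invariant hypersurface), but it is special to birational maps; the paper's route, while heavier, would in principle adapt to dominant rational self-maps satisfying the same degree constraints, since Cantat's finiteness theorem does not require invertibility.
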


	Combining Theorem \ref{thm_zdo} with the striking result of Bell--Diller--Jonsson \cite{BDJ20}, we see that arithmetic degrees can be transcendental for dominant rational self-maps in dimension two. This gives the first counterexample to Conjecture \ref{ksc}(b). 
	
	\begin{mainthm}\label{thm_trans_two} There is a dominant rational self-map $f: \PP^2 \dashrightarrow \PP^2$ defined over $\qb$ and a $\qb$-point $x\in \PP^2_f(\qb)$ such that $\alpha_f(x)$ exists and $\alpha_f(x) = \delta_1(f)$ is a transcendental number. 
	\end{mainthm}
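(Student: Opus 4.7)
The plan is to apply Theorem \ref{thm_zdo} to the example of a rational surface map with transcendental first dynamical degree constructed by Bell, Diller, and Jonsson \cite{BDJ20}.

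First, I would invoke the construction of \cite{BDJ20} to fix a dominant rational self-map $f \colon \PP^2 \dashrightarrow \PP^2$ defined over $\QQ$ (and hence over $\qb$) whose first dynamical degree $\delta_1(f)$ is a transcendental real number.

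Next, I would check that $f$ is $1$-cohomologically hyperbolic. Since $\dim \PP^2 = 2$, Definition \ref{def_coh} reduces to the single inequality $\delta_1(f) > \delta_2(f)$, where $\delta_2(f)$ is the topological degree of $f$ and is a positive integer. In the Bell--Diller--Jonsson construction the topological degree is a prescribed small integer, while $\delta_1(f)$ is produced as a limit that strictly exceeds this integer (and in any case the transcendence of $\delta_1(f)$ combined with the explicit numerical lower bound in \cite{BDJ20} rules out the equality or reverse inequality with an integer). Hence $\delta_1(f) > \delta_2(f)$.

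With these two facts established, Theorem \ref{thm_zdo} applies to $f$ and produces a point $x \in \PP^2_f(\qb)$ whose forward orbit $\OO_f(x)$ is Zariski dense in $\PP^2$, such that $\alpha_f(x)$ exists and satisfies $\alpha_f(x) = \delta_1(f)$. Since $\delta_1(f)$ was chosen to be transcendental, so is $\alpha_f(x)$, producing the desired counterexample to Conjecture \ref{ksc}(b).

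There is no genuine obstacle: the deep content is distributed between Theorem \ref{thm_zdo} and the existence statement of \cite{BDJ20}. The only task internal to this proof is the verification of the inequality $\delta_1(f) > \delta_2(f)$ for the chosen example, which is immediate from the numerical data in \cite{BDJ20}.
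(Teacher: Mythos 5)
Your proposal is correct and takes the same route as the paper: pick a Bell--Diller--Jonsson map with transcendental $\delta_1(f)$, verify it is $1$-cohomologically hyperbolic, and apply Theorem \ref{thm_zdo}. One inaccuracy worth flagging: your claim that ``the topological degree is a prescribed small integer, while $\delta_1(f)$ is produced as a limit that strictly exceeds this integer'' does not hold for the Bell--Diller--Jonsson family in general -- for instance, taking $\zeta = -3 + 4\sqrt{-1}$ gives $\delta_2(f) = 25 > \delta_1(f) \approx 13.45$, which is $2$-cohomologically hyperbolic, not $1$-cohomologically hyperbolic. One must select a specific member of the family, such as the one with $\zeta = 1 + 2\sqrt{-1}$ used in the paper, where $\delta_2(f) = 5 < \delta_1(f) \approx 6.8576$, and check the inequality numerically from the data in \cite{BDJ20}; your final sentence does concede this, so the argument as stated still goes through once that choice is made explicit.
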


	If the answer of the following question is affirmative, then for every $d \geq 3$, there is a birational self-map $f: \PP^d \dashrightarrow \PP^d$ defined over $\qb$ and a $\qb$-point $x\in \PP^d_f(\qb)$ such that $\alpha_f(x)$ exists and $\alpha_f(x) = \delta_1(f)$ is a transcendental number. 
	
	\begin{question}[{cf. \cite[Question 1.6 and Theorem 1.1]{BDJK21}}]\label{ques_trans} Let $d \geq 3$ be a positive integer. Does there exist a birational map $f : \PP^d \dashrightarrow \PP^d$ defined over $\qb$ which is $1$-cohomologically hyperbolic and for which $\delta_1(f)$ is transcendental? 
	\end{question}

	\begin{remark} We notice that for a birational self-map of $\PP^2$, the first dynamical degree is always an algebraic integer (\cite{DF01}). If (a) and (d) of Conjecture \ref{ksc} are true for a birational self-map of $\PP^2$, then it is not difficult to see that its arithmetic degrees are all algebraic integers. 
	\end{remark}

	Finally we give two theorems for dominant rational self-maps with ``large topological degree'', namely, $(\dim X)$-cohomologically hyperbolic self-maps. Such maps are ``highly non-invertible'' and it allows us to prove the following type of inequality:
	\begin{align*}
		h_H^+(f(x))  \geq \kappa h_H^+(x) - C \quad \text{outside a closed subset $Z$}
	\end{align*}
	where $\kappa$ would be some positive real or $1$ depending on the size of $Z$. Combining this idea with a weak dynamical Mordell--Lang theorem \cite{BGT15, BHS20} (cf. \cite[Proof of Proposition 3.11]{JSXZ21}), we can prove:

	\begin{mainthm}\label{thm:ad-largetopdeg}
		Let $X$ be a smooth projective variety of dimension $N$ defined over $ \overline{\QQ}$, and let $f \colon X \dashrightarrow X$ be an $N$-cohomologically hyperbolic dominant rational self-map. Let $x \in X_{f}( \overline{\QQ})$ with Zariski dense $f$-orbit $\OO_{f}(x)$ in $X$. Assume that 
		\begin{itemize}\label{assump_intersection}
			\item[$(\ast)$] the intersection of $\OO_{f}(x)$ and any Zariski closed subset of $X$ of codimension at least two is finite.
		\end{itemize}
		Then we have
		\begin{align*}
			\underline{\alpha}_{f}(x) \geq \frac{ \delta_{N}(f)}{ \delta_{N-1}(f)}.
		\end{align*}
	\end{mainthm}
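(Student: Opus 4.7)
The plan is to produce, for each $\kappa < \delta_N(f)/\delta_{N-1}(f)$, a lower height inequality of the form $h_H(f^n(y)) \geq \lambda_n h_H(y) - C_n$ with $\lambda_n^{1/n} \to \delta_N(f)/\delta_{N-1}(f)$, valid for $y$ in the complement of a proper Zariski closed subset, and then to iterate it along $\{f^{kn}(x)\}_{k \geq 0}$ by invoking hypothesis $(\ast)$ together with a weak dynamical Mordell--Lang input. Passing to the limit $n \to \infty$ then recovers $\underline{\alpha}_f(x) \geq \delta_N(f)/\delta_{N-1}(f)$.

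To obtain the inequality, I would fix $n \gg 1$, resolve the indeterminacy of $f^n$ by a birational morphism $\pi \colon \tx \to X$ so that $g := f^n \circ \pi \colon \tx \to X$ is a morphism, and set $\widetilde H := \pi^* H$. The defining asymptotics of the dynamical degrees yield
\begin{align*}
(g^*H)^N \sim \delta_N(f)^n\, (H^N), \qquad (g^*H)^{N-1} \cdot \widetilde H \sim \delta_{N-1}(f)^n\, (H^N).
\end{align*}
Siu's bigness inequality for the nef classes $g^*H$ and $\widetilde H$ then shows that $g^*H - \lambda_n \widetilde H$ is big for some $\lambda_n$ satisfying $\lambda_n^{1/n} \to \delta_N(f)/\delta_{N-1}(f)$, giving a decomposition $g^*H \equiv \lambda_n \widetilde H + \widetilde E_n$ with $\widetilde E_n$ an effective $\QQ$-divisor. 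Translating via height functoriality for morphisms \cite[Theorem B.3.2]{HS00} and absorbing $\pi$-exceptional contributions (whose images on $X$ have codimension $\geq 2$), we get
\begin{align*}
h_H(f^n(y)) \geq \lambda_n h_H(y) - C_n \quad \text{for all } y \in X_{f^n}(\qb) \setminus W_n,
\end{align*}
where $W_n = D_n \cup Z_n$, with $D_n$ a divisor (the image of $\Supp \widetilde E_n$) and $Z_n$ a proper subset of codimension $\geq 2$ (indeterminacy plus $\pi$-exceptional images).

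Applying the inequality at $y = f^{kn}(x)$, hypothesis $(\ast)$ excludes only finitely many $k$ for which $f^{kn}(x) \in Z_n$. The remaining obstacle is controlling how often the orbit visits the divisor $D_n$: here I would invoke the weak dynamical Mordell--Lang theorem of \cite{BGT15, BHS20}, applied to a morphism model arising from a global resolution of the dynamics (compare the argument in \cite[Proof of Proposition 3.11]{JSXZ21}), to conclude that $\{k : f^{kn}(x) \in D_n\}$ has asymptotic density zero in $\ZZ_{\geq 0}$. Together with a fallback estimate $h_H(f^n(y)) \geq h_H(y) - C'_n$ valid outside a possibly larger divisor (obtained by the same Siu argument with $\kappa = 1$, using only $\delta_N(f) > 1$), this prevents catastrophic height loss at the sparse bad iterates, and one concludes
\begin{align*}
h_H^+(f^{kn}(x)) \geq \lambda_n^{k(1-o(1))}\, h_H^+(x) - O(1),
\end{align*}
hence $\underline{\alpha}_f(x) \geq \lambda_n^{1/n}$. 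Sending $n \to \infty$ gives the theorem. The principal obstacle is the divisorial component $D_n$: hypothesis $(\ast)$ alone cannot preclude infinitely many visits of $\OO_f(x)$ to $D_n$, so both the weak dynamical Mordell--Lang input and the $\kappa=1$ fallback are essential to ensure that the multiplicative growth $\lambda_n^k$ dominates the height losses incurred on the density-zero set of bad times.
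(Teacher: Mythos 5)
Your overall strategy — a Siu-type bigness estimate, iteration along the orbit, and the weak dynamical Mordell--Lang theorem to control the density of bad iterates — matches the paper's, and the two-term inequality $h_H(f^n(y)) \geq \lambda_n h_H(y) - C_n$ you extract from $\phi'^*H - \lambda_n\psi'^*H$ being big is a workable alternative to the paper's three-term inequality involving $\phi^*H$, $\psi^*H$, $\pi^*H$. However, there is a genuine gap in how you handle the density-zero set of bad iterates, and it is not a technicality: it is exactly the point of the paper's Lemma~\ref{lem:weaklowerbound}.

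Your fallback estimate $h_H(f^n(y)) \geq h_H(y) - C'_n$ is obtained by the same Siu argument with $\kappa = 1$, so it fails on a proper closed subset that can again be a divisor. Now consider an iterate $m$ with $g^m(x) \in D_n$ (so the main inequality fails) and simultaneously $g^m(x)$ in the bad divisor of the fallback. Then you have \emph{no} lower bound relating $h_H(g^{m+1}(x))$ to $h_H(g^m(x))$. The weak DML theorem only says the bad set has density zero — it does not say it is finite, and hypothesis $(\ast)$ does not apply to divisors. A density-zero set of unbounded height losses is fatal for a lower bound on $\underline{\alpha}_f(x)$: if, say, $h_H^+(g^{m_i+1}(x))$ were to drop back to $O(1)$ along a sparse sequence $m_i \to \infty$, the liminf of $h_H^+(g^m(x))^{1/m}$ would still be $1$, regardless of how fast the height grows in between. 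So the claim that the $\kappa=1$ fallback ``prevents catastrophic height loss at the sparse bad iterates'' is exactly what is unjustified.

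What is needed, and what Lemma~\ref{lem:weaklowerbound} supplies, is a uniform lower bound $h_H^+(f(x)) \geq \kappa\, h_H^+(x) - C$ whose exceptional locus $W$ has codimension $\geq 2$, so that by hypothesis $(\ast)$ the orbit meets $W$ only finitely often. Obtaining such a $W$ is not free: the proof factors the resolution through the Stein factorization (a finite map onto a \emph{normal} model) and then applies Lemma~\ref{lem-resolution} to arrange that the exceptional divisor $E$ is anti-relatively-ample with $\pi_*E = 0$, so that $-E$ can be traded against the relatively ample pullback without introducing a divisorial bad locus on $X$. Your phrase ``absorbing $\pi$-exceptional contributions (whose images on $X$ have codimension $\geq 2$)'' gestures at this, but mere $\pi$-exceptionality of $\widetilde E_n$ is not enough; you need the anti-ampleness of a specific exceptional divisor to get a \emph{lower} bound, and that is the content of Lemmas~\ref{lem-resolution}, \ref{lem:excep-antiample}, and \ref{lem:weaklowerbound}. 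Without this ingredient the iteration argument does not close.
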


	We note that the assumption $(\ast)$ is automatically satisfied when $\dim X = 2$.

	It is known that the set of preperiodic points of a non-invertible surjective self-morphism
	on a projective space is a set of bounded height, i.e., any height function associated with
	an ample divisor is bounded on this set.
	The second author proved such finiteness of preperiodic points for birational cohomologically
	hyperbolic maps \cite[Theorem 1.1]{Wa22}.
	Following his idea, we prove an $N$-cohomologically hyperbolic version:

	\begin{mainthm}\label{thm:fin-preper-largetopdeg}
		Let $X$ be a smooth projective variety of dimension $N$ defined over $\qb$.
		Let $f \colon X \dashrightarrow X$ be an $N$-cohomologically hyperbolic dominant rational map.
		Then there is a non-empty Zariski open subset $U \subset X$ such that
		the set
		\begin{align*}
			\left\{ x \in U(\qb) \cap X_f(\qb) \,\middle|\, \OO_f(x) \subset U, \# \OO_f(x) < \infty \right\}
		\end{align*}
		is a set of bounded height.
	\end{mainthm}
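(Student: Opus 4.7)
The strategy is to establish a one-step lower height inequality
\begin{equation*}
h_H^+(f(x)) \geq \kappa \, h_H^+(x) - C
\end{equation*}
for some $\kappa > 1$ and constant $C \geq 0$, valid on a non-empty Zariski open $U \subset X$ (with $U \cap I(f) = \emptyset$ and $f(U) \cap I(f) = \emptyset$ after further shrinking), and to observe that preperiodicity then forces bounded height. Indeed, given such an inequality, if $x \in U(\qb) \cap X_f(\qb)$ is preperiodic with $\OO_f(x) \subset U$, iterating yields
\begin{equation*}
h_H^+(f^n(x)) \geq \kappa^n \, h_H^+(x) - C \cdot \frac{\kappa^n - 1}{\kappa - 1} \qquad (n \geq 1),
\end{equation*}
and since $\OO_f(x)$ is finite the left-hand side is bounded uniformly in $n$. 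Letting $n \to \infty$ and using $\kappa > 1$ gives the uniform bound $h_H^+(x) \leq C/(\kappa - 1)$.

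The construction of the functional inequality exploits the $N$-cohomological hyperbolicity, which gives $\delta_N(f)/\delta_{N-1}(f) > 1$; one aims for $\kappa$ just below this ratio. The plan is to pass to a smooth birational model $\pi \colon \widetilde{X} \to X$ on which $\widetilde{f} := f \circ \pi \colon \widetilde{X} \to X$ is a generically finite morphism of degree $\delta_N(f)$, and to use the pushforward height identity
\begin{equation*}
\sum_{y \in \widetilde{f}^{-1}(z)} h_H(y) = h_{\widetilde{f}_{\ast} H}(z) + O(1)
\end{equation*}
valid for $z$ away from the branch locus. Because any single preimage $y = x$ satisfies $h_H(x) \leq \sum_{y \in \widetilde{f}^{-1}(f(x))} h_H(y) + O(1)$, a comparison of $\widetilde{f}_{\ast} H$ to an effective multiple $\lambda H + E$ of $H$ with $\lambda$ close to $\delta_{N-1}(f)/\delta_N(f) < 1$ translates into $h_H(f(x)) \geq \kappa \, h_H(x) - O(1)$ with $\kappa$ close to $\delta_N(f)/\delta_{N-1}(f)$. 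The set $U$ is then the complement of the union of the indeterminacy locus of $f$, the image $\pi(\mathrm{Exc}(\pi))$, the branch locus of $\widetilde{f}$, and the support of the correction divisor $E$.

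The main obstacle is converting the asymptotic numerical comparison $(f^n)_{\ast} H \cdot H^{N-1} \sim \delta_{N-1}(f)^n$ against the topological degree $\delta_N(f)^n$ into an \emph{effective one-step} inequality $\widetilde{f}_{\ast} H \leq \lambda H + E$ with $E$ effective and $\mathrm{Supp}\, E$ a proper subset of $X$. Because $f$ is only a rational map, $f^{\ast}$ is not functorial on $N^{1}(X)$, and it will likely be necessary to replace $f$ by a sufficiently high iterate $f^m$ on a model resolving $f^m$ in order to realize the asymptotic ratio within any prescribed tolerance $\varepsilon > 0$; the induced per-step constant $\kappa = (\delta_N(f)/\delta_{N-1}(f))^{1-\varepsilon}$ is still strictly larger than $1$ and the first-paragraph iteration argument applies to $f^m$, noting that $x$ is preperiodic under $f$ iff it is preperiodic under $f^m$. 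Irreducibility of $X$ combined with the strict inequality $\delta_N(f) > \delta_{N-1}(f)$ ensures that the correction divisor has proper support, so that $U$ is non-empty.
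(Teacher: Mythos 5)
Your reduction in the first paragraph is correct, and if the per-step inequality could be established it would in fact give a simpler proof than the one in the paper: the paper instead combines a three-term inequality coming from bigness of the divisor $D_n$ in Proposition~\ref{prop_bigness} with the $\kappa=1$ inequality from Lemma~\ref{lem:bigness-largetopdeg}, and then runs a quadratic-recurrence estimate with an auxiliary sequence $l_n$ whose growth rate $\alpha$ is the larger root of a quadratic. Your strategy avoids all of that.

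The gap is in how you propose to obtain the per-step bound $h_H^+(f(x)) \geq \kappa\,h_H^+(x) - C$ with $\kappa>1$. The pushforward formula $\sum_{y\in \widetilde f^{-1}(z)} h_{\pi^*H}(y) = h_{\widetilde f_*\pi^*H}(z) + O(1)$ controls $h_H(x)$ only by the \emph{sum} over all $\approx \delta_N(f)^m$ preimages, since $x$ is just one of them, and the numerical comparison you want has the wrong sign: $\widetilde f_*\pi^*H \cdot H^{N-1} = \deg_{N-1,H}(f^m)$, which grows like $\delta_{N-1}(f)^m$ and is unbounded in $m$ (indeed $\delta_{N-1}(f)>1$ whenever $\delta_N(f)>1$, by log concavity (\ref{eq_fdd})), while $\lambda H\cdot H^{N-1}=\lambda H^N$ stays bounded. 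So there is no $\lambda<1$ for which $\widetilde f_*\pi^*H \leq \lambda H + E$ holds with $\Supp E$ a proper subset, and pushing this route through yields at best $h_H(f^m(x)) \geq (\mathrm{const}/\delta_{N-1}^m)\,h_H(x) - C$, a constant degenerating to $0$ rather than one above $1$. To salvage the strategy, drop the pushforward formula and get $\kappa>1$ directly from a bigness statement on a resolution $X'$ of $f^n$, in the style of Lemma~\ref{lem:bigness-largetopdeg} with an extra factor: for any fixed $\kappa$, the estimate
\begin{align*}
\vol\!\left({\phi'_n}^{*}H - \kappa\,{\psi'_n}^{*}H\right) \;\geq\; \deg_{N,H}(f^n) - N\kappa\,\deg_{N-1,H}(f^n) \;=\; H^N\delta_N(f)^n - N\kappa\,\deg_{N-1,H}(f^n)
\end{align*}
is positive once $n$ is large (using $\delta_N(f)>\delta_{N-1}(f)$), so ${\phi'_n}^{*}H - \kappa\,{\psi'_n}^{*}H$ is big, and \cite[Lemma 2.26(1)]{LS21} then gives $h_H(f^n(z)) \geq \kappa h_H(z) - C$ for $z$ outside the image of its augmented base locus. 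Your iteration paragraph, applied to $g=f^n$, then completes the proof.
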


	\subsection*{Organization of the paper}
	In Section \ref{sec_prel}, we summarize properties of dynamical degree and arithmetic degree.
	Some of them are well-known facts and some of them are new (at least we could not find references).
	In Section \ref{sec_lemmas}, we show several key lemmas for the proof of main theorems.
	In Section \ref{sec_prf}, we prove main theorems except the last two ones about self-maps with large topological degree. We also collect several examples of cohomologically hyperbolic self-maps. The last two theorems, i.e., Theorems \ref{thm:ad-largetopdeg} and \ref{thm:fin-preper-largetopdeg}, are proved in Section \ref{sec_lgtopdeg}.

	\subsection*{Convention}  
	\begin{itemize}
		\item A variety is an irreducible and reduced separated scheme of finite type over a field. 
		\item Let $X$ be a projective variety over $\qb$ and $D$ an $\RR$-Cartier divisor on $X$.
		By the basic theory of Weil height functions, we can attach $D$ a function $X(\qb) \to \RR$
		called a Weil height function, which is determined by $D$ up to difference by bounded functions. By fixing a Weil height function $h_D$ associated with $D$, we mean picking and fixing one such function. 
		\item Let $X$ be a reduced separated scheme of finite type over $\qb$.
		{\it The adelic topology on $X(\qb)$} is the topology on $X(\qb)$ defined in \cite[Section 3]{Xi22}.
		Adelic topology is stronger than Zariski topology, and irreducible components with respect to
		Zariski topology are still irreducible components with respect to adelic topology.
		See \cite[Proposition 3.18]{Xi22} for more properties of this topology.
	\end{itemize}
	
	\subsection*{Acknowledgements.} 
	This project started when the authors stayed at National University of
	Singapore hosted by Professor De-Qi Zhang. We wish to thank him for his hospitality.
	The first author thanks Doctor Shou Yoshikawa for answering his questions. The second author thanks Professors Chen Jiang and Keiji Oguiso as well as Doctors Jia Jia and Sichen Li for suggestions, discussions and comments. Both authors thank Professors Tien-Cuong Dinh,  Joseph Silverman, Junyi Xie, and the referee for their comments and suggestions. 
	The first author is supported by JSPS KAKENHI Grant Number JP22K13903.
	The second author is supported by JSPS KAKENHI Grant (21J10242), Postdoctoral Fellowship Program of CPSF (GZC20230535), and National Key Research and Development Program of China (\#2023YFA1010600).

	\section{Preliminaries}\label{sec_prel}

	\subsection{Dynamical degrees} 
	In this subsection, we work over an algebraically closed field $\kk$
	of characteristic zero.
	Let $X$ be a smooth projective variety over $\kk$. Let $f : X \dashrightarrow X$ be a dominant rational self-map. 
	\begin{definition}\ 
		\begin{enumerate}
			\item 
			Let us fix a resolution of indeterminacy
			\begin{align*}
				\xymatrix{
					& X' \ar[ld]_{\psi} \ar[rd]^{\phi} &\\
					X \ar@{-->}[rr]_f && X
				}
			\end{align*}
			i.e. a smooth projective variety $X'$ with a birational morphism $\psi$
			and a morphism $\phi$ such that $f \circ \psi = \phi$.
			For a codimension $k$-cycle $\alpha$ on $X$, we define 
			the pull-back cycle class $f^* \alpha$ as
			\begin{align*}
				f^* \alpha = \psi_* \phi^* \alpha
			\end{align*}
			where $\phi^*$ and $\psi_*$ in the right-hand side are
			usual Gysin pull-back and proper push-forward of cycles.
			Note that, by the projection formula,  $f^* \alpha$ is independent of the choice of the resolution $X'$. 
			This $f^*$ induces a homomorphism
			\begin{align*}
				f^* \colon N^k(X) \longrightarrow N^k(X)
			\end{align*}
			where $N^k(X)$ is the group of codimension $k$-cycles on $X$ modulo
			numerical equivalence.
			
			\item 
			Let $H$ be an ample Cartier divisor on $X$.
			For $k = 0,1, \dots, \dim X$, the $k$-th degree of $f$ with respect to $H$ is
			\begin{align*}
				\deg_{k,H}(f) = \left( f^{\ast}(H^k)\cdot H^{\dim X - k} \right).
			\end{align*}
			The \textit{$k$-th dynamical degree} of $f$ is defined as
			\begin{align*}
				\delta_k(f) = \lim_{i \to \infty} \left( \deg_{k, H} (f^i) \right)^{1/i}.
			\end{align*}
			This limit exists and is independent of the choice of ample Cartier divisor $H$ (\cite{Da20, Tr20}). In fact, we have
			\begin{align*}
				\delta_{k}(f) = \lim_{i \to \infty} \|(f^i)^*|_{N^k(X)_\RR} \|^{1/i}
			\end{align*}
			where $N^k(X)_\RR = N^k(X) \otimes_\ZZ \RR$, $(f^i)^*|_{N^k(X)_\RR}$
			is the pull-back linear map acting on $N^k(X)_\RR$, and $\|\ \|$
			is an arbitrary norm on the finite dimensional vector space 
			$\End_\RR(N^k(X)_\RR)$ (cf. \cite{Da20,Tr20}).
		\end{enumerate}
		
	\end{definition}
	
	In particular, when $f$ is a surjective endomorphism, or more generally, $f$ is \textit{algebraically} $k$-\textit{stable}, that is, $(f^i)^{\ast} = (f^{\ast})^i$ on $N^k(X)_{\RR}$ for all $i \geq 1$, we have
	\[ \delta_k(f) = \text{spectral radius of}\ f^{\ast}: N^k(X)_{\RR} \to N^k(X)_{\RR}. \]
	
	One basic property of dynamical degrees is the birational invariance. Precisely, if $\pi: X \dashrightarrow Y$ is a generically finite dominant rational map between normal projective varieties, and $g : Y \dashrightarrow Y$ is a rational self-map such that $g \circ \pi = \pi \circ f$, then
	$\delta_i(f) = \delta_i(g)$ for all $i$ (see e.g., \cite[Theorem 1]{Da20}). In view of this, we can define the dynamical degrees for dominant rational self-maps of an \textit{arbitrary} projective variety in an obvious manner. 
	
	Another basic property is that the function $k \mapsto \log \delta_k(f)$ is concave (see e.g., \cite[Proposition 1.2.i)]{Gu05}). Equivalently,  we have 
	\begin{equation}\label{eq_logc}
		\delta_k(f)^2 \geq \delta_{k-1}(f) \delta_{k+1}(f) \ \ \text{for } 1 \leq k \leq \dim X - 1.
	\end{equation}
	In particular, we have 
	\begin{equation}\label{eq_fdd}
		1 \leq \delta_k(f) \leq \delta_1(f)^k \ \ \text{for } 0 \leq k \leq \dim X,  
	\end{equation}
	and there are positive integers $p$ and $q$ with $0 \leq p \leq q \leq \dim X$ such that
	\begin{equation}\label{eq_kt}
		1 = \delta_0(f) < \cdots < \delta_p(f) = \cdots = \delta_q(f) > \cdots > \delta_{\dim X}(f). 
	\end{equation}

	The following is a consequence of log concavity of dynamical degrees.
	We use it in the proof of Proposition \ref{prop_bigness}. 
	
	\begin{lemma}\label{lemma:ineq-of-dynamical-degrees} Let $X$ be a projective variety of dimension $N$ over $\kk$ and $f \colon X \dashrightarrow X$ a dominant rational self-map. 
		Let $p \in \{1, \dots, N\}$ be a positive integer such that 
		$\delta_{p}(f) = \max\{\delta_{1}(f),\dots, \delta_{N}(f)\}$.
		Then we have
		\begin{align*}
			\delta_{p}(f) \delta_{p-1}^{p}(f) \delta_{p+1}^{N-p}(f) \geq \delta_{p-1}^{i}(f) \delta_{p+1}^{N-1-i}(f) \delta_{i}(f) \delta_{i+1}(f)
		\end{align*}
		for $i = 0,\dots, N-1$. Here we set $ \delta_{N+1}(f) = 1$.
	\end{lemma}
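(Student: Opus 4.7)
The plan is to take logarithms and reduce the claim to an inequality between concave sequences. Setting $a_{k} := \log \delta_{k}(f)$ with the conventions $a_{0} = 0$ and $a_{N+1} = \log 1 = 0$, the claimed inequality is equivalent to
\begin{equation*}
\ell(i) := a_{p} + (p-i)\,a_{p-1} + (i+1-p)\,a_{p+1} \;\geq\; a_{i} + a_{i+1} \;=:\; h(i)
\end{equation*}
for every $i \in \{0,1,\dots,N-1\}$. Notice that $\ell$ is affine in $i$, and a direct substitution gives $\ell(p-1) = a_{p} + a_{p-1} = h(p-1)$ and $\ell(p) = a_{p} + a_{p+1} = h(p)$, so $\ell$ can be viewed as the (possibly extended) chord of $h$ through the two indices $p-1$ and $p$.

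Next I would verify that $h$ is concave on $\{0,\dots,N-1\}$. This follows from log-concavity (\ref{eq_logc}): the inequality $2h(i) \geq h(i-1) + h(i+1)$ rewrites as $(a_{i} - a_{i-1}) \geq (a_{i+2} - a_{i+1})$, which is exactly the non-increasing-slope characterization of concavity of $a_{\bullet}$ on $\{0,\dots,N\}$. I would also record that the hypothesis $a_{p} = \max$, together with concavity of $a$, forces $a$ to be non-decreasing on $\{0,\dots,p\}$ and non-increasing on $\{p,\dots,N\}$.

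With these ingredients, I would split the proof into two cases. When $1 \leq p \leq N-1$, both chord endpoints $p-1$ and $p$ lie in $\{0,\dots,N-1\}$, and the standard fact that a concave sequence lies below its extended chord off the chord's index range yields $h(i) \leq \ell(i)$ for all $i \leq p-2$ or $i \geq p+1$, with equality at $i \in \{p-1,p\}$. When $p = N$, the chord argument breaks down because the second chord endpoint $p = N$ is outside $\{0,\dots,N-1\}$; here I would instead use that $a$ is non-decreasing throughout and argue directly: for $i \leq N-2$, bound $h(i) \leq 2 a_{N-1}$ while $\ell(i) = a_{N} + (N-i)\,a_{N-1} \geq 2\, a_{N-1}$, using $a_{N} \geq 0$ and $N - i \geq 2$, and the case $i = N-1$ is the equality $\ell(N-1) = a_{N} + a_{N-1} = h(N-1)$. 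The main obstacle is precisely this separate treatment of $p = N$: the log-concavity inequality at $k = N$ is not part of the hypothesis (we have only set $\delta_{N+1} = 1$ as a convention), so the monotonicity of $a$ coming from maximality at the endpoint must be used as a substitute for the missing concavity step.
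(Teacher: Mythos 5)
Your proof is correct, and it reaches the conclusion by a route that is related to but genuinely different in packaging from the paper's. Both arguments ultimately rest on log-concavity (\ref{eq_logc}), but the paper works multiplicatively: it forms the ratio $R_i$ of the two sides, substitutes the bounds $\delta_p/\delta_i \geq (\delta_p/\delta_{p\mp 1})^{|p-i|}$, and chains inequalities separately for $i<p$ and $i\geq p$ until $R_i \geq 1$ drops out. You instead pass to logarithms and make the structural observation that the right-hand side is $h(i) = a_i + a_{i+1}$, which is a concave sequence on $\{0,\dots,N-1\}$, while the left-hand side is exactly the affine extension $\ell$ of the secant of $h$ through the two consecutive indices $p-1$ and $p$; the inequality then follows from the standard fact that a concave sequence lies on or below its extended secants, with equality precisely at the secant's endpoints. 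This is a cleaner conceptual picture of why the inequality holds. The small cost is the extra case $p=N$: your secant argument literally needs $p\leq N-1$ so that both chord endpoints lie in the domain of $h$, and you correctly fall back on monotonicity of $a$ (from maximality at the endpoint) plus $a_N\geq 0$ to handle $i\leq N-2$. In the paper's formulation this boundary case disappears automatically because when $p=N$ only the $i<p$ branch ever arises and the chain of inequalities goes through verbatim with $\delta_{N+1}=1$. Both approaches are valid; yours offers the clearer geometric interpretation, the paper's is more uniform.
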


	\begin{proof}
		Let us write 
		\begin{align*}
			\delta_{i} = \delta_{i}(f)
		\end{align*}
		for $i = 0,\dots , N$. 
		By log concavity of dynamical degrees 
		(\ref{eq_logc}), 
		we have
		\begin{align*}
			\frac{ \delta_{p}}{ \delta_{i}} \geq 
			\begin{cases}
				\left( \frac{ \delta_{p}}{ \delta_{p-1}}\right)^{p-i} \quad \text{if $i < p$}\\[2mm]
				\left( \frac{ \delta_{p}}{ \delta_{p+1}}\right)^{i-p} \quad \text{if $i \geq p$}.
			\end{cases}
		\end{align*}
		Let 
		\begin{align*}
			R_{i} &:= \frac{\delta_{p} \delta_{p-1}^{p} \delta_{p+1}^{N-p} }{ \delta_{p-1}^{i} \delta_{p+1}^{N-1-i} \delta_{i}\delta_{i+1}}\\
			&= \frac{ \delta_{p} \delta_{p+1}}{ \delta_{i} \delta_{i+1}} \left(\frac{ \delta_{p-1}}{ \delta_{p+1}} \right)^{p-i}.
		\end{align*}
		When $i < p$, we have
		\begin{align*}
			R_{i} & = \frac{\delta_{p}}{\delta_{i} }\frac{  \delta_{p+1}}{ \delta_{i+1}} \left(\frac{ \delta_{p-1}}{ \delta_{p+1}} \right)^{p-i} \\
			&\geq \left( \frac{ \delta_{p}}{ \delta_{p-1}}\right)^{p-i} \frac{  \delta_{p+1}}{  \delta_{i+1}} \left(\frac{ \delta_{p-1}}{ \delta_{p+1}} \right)^{p-i}\\
			&= \left( \frac{ \delta_{p}}{ \delta_{p+1}}\right)^{p-i} \frac{  \delta_{p+1}}{  \delta_{i+1}} \geq \frac{ \delta_{p}}{ \delta_{i+1}} \geq 1.
		\end{align*}
		When $i \geq p$, we have
		\begin{align*}
			R_{i} & = \frac{\delta_{p}}{\delta_{i+1} }\frac{  \delta_{p+1}}{ \delta_{i}} \left(\frac{ \delta_{p-1}}{ \delta_{p+1}} \right)^{p-i} \\
			&\geq \left( \frac{ \delta_{p}}{ \delta_{p+1}}\right)^{i+1-p}  \frac{  \delta_{p+1}}{ \delta_{i} } \left(\frac{ \delta_{p-1}}{ \delta_{p+1}} \right)^{p-i}\\
			& = \frac{ \delta_{p}}{ \delta_{i}} \left( \frac{ \delta_{p}}{ \delta_{p-1}}\right)^{i-p} \geq 1.
		\end{align*}
		This proves the lemma. 
	\end{proof}
	
	The following lemma compares dynamical degrees of maps and those of maps restricted on
	an invariant subvariety.
	We use this in the proof of Theorem \ref{thm_zdo}.
	
	\begin{lemma}\label{lem:comp-dyn-deg} 
		Let $X$ be a smooth projective variety of dimension $N$ defined over $\kk$, and $f : X \dashrightarrow X$ a dominant rational self-map. 
		Let $Z \subset X$ be a positive dimensional irreducible closed subvariety of dimension $N-l$ with generic point $\eta$.
		Suppose $\eta \notin I(f)$, $f(\eta)= \eta$ (i.e., $Z \not\subset I(f)$ and $f|_Z$ induces a dominant rational self-map of $Z$), and $f$ is quasi-finite at $\eta$.
		Let $g \colon Z \dashrightarrow Z$ be the rational map induced by $f$.
		Then we have 
		\begin{align*}
			\delta_{i}(g) \leq \delta_{i+l}(f)
		\end{align*}
		for $i=1,\dots, N-l$.
	\end{lemma}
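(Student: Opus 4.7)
The plan is to prove the uniform inequality
\[
\deg_{i,H|_Z}(g^n) \;\leq\; C\cdot \deg_{i+l,H}(f^n)
\]
for all $n\geq 1$ with a constant $C$ depending only on $(X,H,Z)$; taking $n$-th roots and letting $n \to \infty$ then yields $\delta_{i}(g)\leq \delta_{i+l}(f)$. To set things up, for each $n$ I would choose a smooth resolution $\pi_n\colon X_n\to X$ of $f^n$ with $F_n:=f^n\circ\pi_n\colon X_n\to X$, arranged so that $\pi_n$ is an isomorphism over a neighborhood of $\eta$. This is possible because $\eta\notin I(f)$ together with $f(\eta)=\eta$ implies $\eta\notin I(f^n)$ by an easy induction. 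Let $Z_n\subset X_n$ be the strict transform of $Z$; then $p_n:=\pi_n|_{Z_n}\colon Z_n\to Z$ is birational, and $q_n:=F_n|_{Z_n}\colon Z_n\to Z$ resolves $g^n$ and is dominant and generically finite (since quasi-finiteness of $f$ at $\eta$ propagates to quasi-finiteness of $f^n$ at $\eta$). Applying the projection formula for $p_n$ and for the closed embedding $Z_n\hookrightarrow X_n$, I rewrite the left-hand degree as an intersection number on $X_n$:
\[
\deg_{i,H|_Z}(g^n) \;=\; \int_{X_n} (F_n^*H)^{i}\cdot (\pi_n^*H)^{N-l-i}\cdot [Z_n].
\]

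The heart of the proof is a uniform bound of the form
\[
[Z_n] \cdot (F_n^*H)^{i}\cdot (\pi_n^*H)^{N-l-i} \;\leq\; C\cdot (F_n^*H)^{i+l}\cdot (\pi_n^*H)^{N-l-i}
\]
on $X_n$, with $C$ depending only on $(X,H,Z)$. Granted this, the right-hand side is exactly $C\cdot\deg_{i+l,H}(f^n)$ and we are done. To prove the bound, I would fix $m$ large enough so that, by a Bertini-type argument, there exist effective divisors $D_1,\dots,D_l\in|mH|$ on $X$ all containing $Z$ with $D_1\cdot D_2\cdots D_l=[Z]+[W]$ as effective codimension-$l$ cycles on $X$ (with $Z$ appearing with multiplicity one and $W$ effective). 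Pulling back to $X_n$ gives the identity $m^l(F_n^*H)^l=F_n^*D_1\cdots F_n^*D_l$ in $N^l(X_n)_{\RR}$. Each $F_n^*D_j$ is an effective Cartier divisor containing $Z_n$ set-theoretically, since $F_n(Z_n)\subset Z\subset D_j$.

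The main obstacle, and the key technical step, is to verify that $Z_n$ appears with multiplicity at least one as a codimension-$l$ component of the intersection cycle $F_n^*D_1\cdots F_n^*D_l$; once this is established, the residual class is a sum of effective codimension-$l$ contributions and in particular is non-negative against every nef class, yielding the claimed bound with $C=m^l$. This multiplicity statement reduces to showing that the ideal $(F_n^*D_1,\dots,F_n^*D_l)$ is $\mathfrak{m}_{\eta_n}$-primary in the local ring at the generic point $\eta_n$ of $Z_n$, which is a direct consequence of the quasi-finiteness of $F_n$ at $\eta_n$. The latter holds because $\pi_n$ is an isomorphism at $\eta_n$ and $f^n$ is quasi-finite at $\eta$; the quasi-finiteness hypothesis on $f$ in the statement of the lemma is precisely what makes this multiplicity argument go through.
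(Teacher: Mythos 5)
Your proposal follows the same overall strategy as the paper's proof: cut $Z$ out (as a proper component) by $l$ members of a suitable very ample linear system, pull them back under the resolution, use quasi-finiteness at the generic point to see that $Z_n$ is still a proper component of the intersection upstairs, and conclude by comparing intersection numbers. The cosmetic differences (using $|mH|$ with a constant $C=m^l$ instead of choosing $H$ so that the divisors already live in $|H|$) are harmless, since the $n$-th root kills the constant.

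However, there is a genuine gap at the key step: you claim that once $Z_n$ appears as a proper component of $F_n^*D_1\cap\cdots\cap F_n^*D_l$, ``the residual class is a sum of effective codimension-$l$ contributions.'' This is not automatic. The intersection $F_n^*D_1\cap\cdots\cap F_n^*D_l$ can have excess components (for instance, if $F_n$ contracts a divisor to a point of $Z$, that exceptional divisor lies in every $F_n^*D_j$ regardless of the choice of the $D_j$'s), and the Fulton intersection product is then a cycle \emph{class}, not a canonical effective cycle; its excess contributions need not be effective. What is actually true, and suffices, is that $F_n^*D_1\cdots F_n^*D_l - [Z_n]$ is \emph{pseudo-effective}, since the $F_n^*D_j$ are nef and $Z_n$ is a proper component. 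This positivity statement is exactly what the paper invokes as \cite[Lemma 3.3]{JSXZ21}; you are implicitly reproving it but the crucial assertion is left unjustified. The correct verification of the local multiplicity at $\eta_n$ via the $\mathfrak{m}_{\eta_n}$-primary condition is what the paper also does (phrased as ``$\phi$ is quasi-finite at the generic point of $\widetilde{Z}$, so $\widetilde{Z}$ is an irreducible component of the intersection''), so that part of your reasoning is fine; the missing piece is precisely the pseudo-effectivity of the residual, which should be cited or proved with care about excess intersections rather than asserted.
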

	\begin{proof}
		Take a very ample divisor $H$ on $X$ with the following property:
		there are effective divisors $H_1, \dots ,H_l$ that are linearly equivalent to $H$
		such that $H_1 \cap \cdots \cap H_l$ contains $Z$ as an irreducible component.
		
		Fix arbitrary $n \in \ZZ_{\geq 1}$.
		Take the following commutative diagram:
		\begin{align*}
			\xymatrix{
				Z \ar@{}[d]|{\bigcap} \ar@{-->}@/^20pt/[rr]^{g^n} & \widetilde{Z}  \ar[l]_{\psi_Z} \ar@{}[d]|{\bigcap} \ar[r]^{\phi_Z}& Z \ar@{}[d]|{\bigcap} \\
				X \ar@{-->}@/_20pt/[rr]_{f^n} & \widetilde{X} \ar[l]^{\psi} \ar[r]_\phi & X
			}
		\end{align*}
		where $\widetilde{X}$ is a smooth projective variety, $\psi$ is isomorphic over 
		$X \setminus I(f^n)$, $\widetilde{Z}$ is an irreducible closed subvariety of $\widetilde{X}$,
		and $\psi_Z$ is isomorphic over $Z \setminus I(f^n)$.
		Let $H_Z$ be the restriction of $H$ on $Z$ (as a divisor class).
		Then we have
		\begin{align*}
			\deg_{i,H_Z}(g^n) &= \left((g^n)^*(H_Z^i)\cdot H_Z^{N-l-i} \right)_{\widetilde{Z}} \\
			&= \left(\phi_Z^*H_Z^i \cdot \psi_Z^* H_Z^{N-l-i} \right)_{\widetilde{Z}}\\
			&= \left( \phi^*H^i \cdot \psi^*H^{N-l-i} \cdot [\widetilde{Z}]\right)_{\widetilde{X}}.
		\end{align*}
		By our assumptions, $\eta \notin I(f^n)$ and $f^n$ is quasi-finite at $\eta$.
		Thus $\phi$ is quasi-finite at the generic point of $\widetilde{Z}$.
		This implies $\phi^*H_1 \cap \cdots \cap \phi^*H_l$ contains $\widetilde{Z}$
		as an irreducible component.
		By \cite[Lemma 3.3]{JSXZ21}, $\phi^*H_1 \cdots \phi^*H_l - [\widetilde{Z}]$
		defines a pseudo-effective class in $N^l(X)$, the group of codimension $l$-cycles
		modulo numerical equivalence.
		Thus we obtain
		\begin{align*}
			\left( \phi^*H^i \cdot \psi^*H^{N-l-i} \cdot [\widetilde{Z}]\right)_{\widetilde{X}}
			&\leq \left( \phi^*H^i \cdot \psi^*H^{N-l-i} \cdot \phi^*H_1 \cdots \phi^*H_l \right)_{\widetilde{X}} \\
			&= \left( \phi^*H^{i+l} \cdot \psi^*H^{N-l-i}  \right)_{\widetilde{X}} \\
			&= \deg_{i+l,H}(f^n).
		\end{align*}
		It then follows that $\deg_{i,H_Z}(g^n) \leq \deg_{i+l,H}(f^n)$ for all $n\geq 1$
		and hence $\delta_{i}(g) \leq \delta_{i+l}(f)$ as claimed.
	\end{proof}

    In Lemma \ref{lem:comp-dyn-deg}, the assumption that $f$ is quasi-finite at $\eta$ roughly means that $Z$ is not contained in the locus where $f$ is not generically finite. We need this assumption for our proof, and do not know if we can get rid of this. As a contrast, let us recall the following similar result, a special case of \cite[Proposition 3.2]{JSXZ21}, which also compares the dynamical degrees of a rational self-map and of its restriction.

	\begin{lemma}\label{lem_jsxz3.2} Let $X$ be a smooth projective variety over $\kk$, and $f : X \dashrightarrow X$ a dominant rational self-map. Let $Z$ be an irreducible closed subvariety of $X$ which is not contained in $I(f)$ such that $f|_Z$ induces a dominant rational self-map of $Z$. Then for $i = 0, 1, \dots, \dim Z$, we have 
		\[ \delta_i(f|_Z) \leq \delta_i(f). 
		\]
	\end{lemma}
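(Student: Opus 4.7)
The plan is to mirror the argument of Lemma \ref{lem:comp-dyn-deg} verbatim, but to bound the class $[\widetilde{Z}]$ using pull-back along the birational morphism $\psi$ instead of $\phi$. This substitution is precisely what allows us to dispense with any quasi-finiteness hypothesis along $Z$, at the cost of obtaining only $\delta_i(f|_Z) \leq \delta_i(f)$ rather than a sharper bound involving $\delta_{i+l}(f)$. The strategy is to bound $\deg_{i, H_Z}((f|_Z)^n)$ above by a constant multiple (independent of $n$) of $\deg_{i, H}(f^n)$ for every $n \geq 1$, and then take $n$-th roots.

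First, for each $n \geq 1$, take a resolution of indeterminacy
\[
\xymatrix{
& \widetilde{X} \ar[ld]_\psi \ar[rd]^\phi & \\
X \ar@{-->}[rr]_{f^n} && X
}
\]
with $\widetilde{X}$ smooth, $\psi$ birational, and $\phi = f^n \circ \psi$ a morphism, and let $\widetilde{Z} \subset \widetilde{X}$ be the strict transform of $Z$. Since $Z \not\subset I(f^n)$ and $(f|_Z)^n$ is dominant, $\psi|_{\widetilde{Z}} \colon \widetilde{Z} \to Z$ is birational and $\phi|_{\widetilde{Z}} \colon \widetilde{Z} \to Z$ is dominant, so $(f|_Z)^n$ is resolved through $\widetilde{Z}$ (after further desingularization if necessary, which does not affect intersection numbers by birational invariance). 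Setting $H_Z := H|_Z$ and $l = \codim_X Z$, the projection formula gives, as an intersection number on $\widetilde{X}$,
\[
\deg_{i, H_Z}((f|_Z)^n) = \phi^* H^i \cdot \psi^* H^{\dim Z - i} \cdot [\widetilde{Z}].
\]
Next, choose a positive integer $m$ and sufficiently general divisors $H_1, \dots, H_l \in |mH|$ such that $Z$ is an irreducible component of $H_1 \cap \cdots \cap H_l$ appearing with multiplicity one at its generic point. Because $\psi$ is an isomorphism on a dense open meeting the generic point of $Z$, the strict transform $\widetilde{Z}$ is an irreducible component of the scheme-theoretic intersection $\psi^* H_1 \cap \cdots \cap \psi^* H_l$. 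Thus, by \cite[Lemma 3.3]{JSXZ21}, the class $m^l \psi^* H^l - [\widetilde{Z}]$ is pseudo-effective in $N^l(\widetilde{X})$. Pairing with the nef class $\phi^* H^i \cdot \psi^* H^{\dim Z - i}$ yields
\[
\deg_{i, H_Z}((f|_Z)^n) \leq m^l \cdot \phi^* H^i \cdot \psi^* H^{\dim X - i} = m^l \cdot \deg_{i, H}(f^n).
\]
Taking $n$-th roots and letting $n \to \infty$, since $m$ is independent of $n$, the factor $(m^l)^{1/n}$ tends to $1$, and we conclude $\delta_i(f|_Z) \leq \delta_i(f)$.

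The main technical point is verifying that $\widetilde{Z}$ genuinely appears as an irreducible component of $\psi^* H_1 \cap \cdots \cap \psi^* H_l$, which rests on choosing the $H_i$ generically enough and on the fact that $\psi$ is an isomorphism over the generic point of $Z$. A secondary subtlety is that $\widetilde{Z}$ may be singular, but since dynamical degrees and intersection numbers are birational invariants, one can always pass to a desingularization to make sense of the computation.
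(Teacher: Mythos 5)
Your proof is correct, but it is worth noting that the paper itself does not actually prove this lemma: it simply cites it as a special case of \cite[Proposition 3.2]{JSXZ21}. So you are supplying a self-contained argument where the paper relies on a reference. Your approach is a genuinely useful variation on the paper's own Lemma~\ref{lem:comp-dyn-deg}: there the authors bound $[\widetilde{Z}]$ by $\phi^*H_1 \cdots \phi^*H_l$, which requires knowing that $\phi$ is quasi-finite at the generic point of $\widetilde{Z}$ (i.e., that $f^n$ is quasi-finite at the generic point of $Z$) in order to see that $\widetilde{Z}$ is a component of $\phi^*H_1 \cap \cdots \cap \phi^*H_l$, and this yields the sharper comparison $\delta_i(f|_Z) \leq \delta_{i+l}(f)$. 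You instead bound $[\widetilde{Z}]$ by $\psi^*H_1 \cdots \psi^*H_l$; since $\psi$ is always an isomorphism over the generic point of $Z$ (which lies outside $I(f^n)$), the needed component statement is automatic, so the quasi-finiteness hypothesis vanishes, at the cost that the $\psi^*H^l$ you absorb now lands on the $\psi$-side of the intersection number, giving $\deg_{i,H}(f^n)$ rather than $\deg_{i+l,H}(f^n)$. This is exactly the trade-off the paper's Remark preceding the lemma alludes to, and it is a nice observation that the two statements sit on either side of the same computation. The minor points you flag yourself (the possible singularity of $\widetilde{Z}$ handled by desingularizing and birational invariance of intersection numbers, the choice of $H_1,\dots,H_l \in |mH|$ with $m$ independent of $n$ so that $(m^l)^{1/n} \to 1$) are all handled correctly.
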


	We end this subsection with the following well-known fact saying that a cohomologically hyperbolic dominant rational self-map does not admit any invariant rational function (cf. \cite[Theorem 4.1]{OT15}). 
	
	\begin{lemma}\label{lem_no_invariant_fib} Let $X$ be a projective variety over $\kk$, and $f : X \dashrightarrow X$ a cohomologically hyperbolic dominant rational self-map. Then there exists no $f$-invariant rational function on $X$. 
	\end{lemma}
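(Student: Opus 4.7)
The plan is to argue by contradiction using the product formula for relative dynamical degrees due to Dinh--Nguyen and Truong. Suppose $h$ is a non-constant $f$-invariant rational function on $X$. Interpreting $h$ as a dominant rational map $\pi \colon X \dashrightarrow \PP^{1}$, the invariance $h \circ f = h$ becomes the semi-conjugation $\pi \circ f = \id_{\PP^{1}} \circ \pi$. After replacing $X$ by a resolution of singularities and $\pi$ by a lift (which does not affect dynamical degrees by their birational invariance), we may apply the product formula with base map $g = \id_{\PP^{1}}$.

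Since every dynamical degree of $\id_{\PP^{1}}$ is equal to one, the product formula simplifies to
\begin{align*}
\delta_{k}(f) = \max\bigl\{ \delta_{k}(f|_{F}),\; \delta_{k-1}(f|_{F})\bigr\}
\end{align*}
for $1 \leq k \leq \dim X - 1$, together with the boundary identities $\delta_{0}(f) = 1$ and $\delta_{\dim X}(f) = \delta_{\dim X - 1}(f|_{F})$, where $F$ is a general fiber of $\pi$. The key combinatorial observation is that each value $\delta_{j}(f|_{F})$ with $0 \leq j \leq \dim X - 1$ appears as a term in the max formula for two consecutive indices $k$ (namely $k = j$ and $k = j+1$, with appropriate interpretation at the boundaries). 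Therefore, if $p^{\ast}$ is an index at which the maximum of $\{\delta_{0}(f|_{F}), \dots, \delta_{\dim X - 1}(f|_{F})\}$ is achieved, then $\delta_{k}(f)$ attains its global maximum at (at least) two distinct consecutive values of $k$.

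This directly contradicts $p$-cohomological hyperbolicity, which requires $\delta_{p}(f)$ to be a \emph{strict} maximum attained at a unique index. The edge case $p^{\ast} = 0$ forces every $\delta_{k}(f) = 1$, which is incompatible with cohomological hyperbolicity since the latter entails $\delta_{p}(f) > 1$. The main potential obstacle is verifying that the Dinh--Nguyen--Truong product formula applies in the form we invoke, but this is by now standard for dominant rational semi-conjugations after passing to smooth projective models, and the rest of the argument reduces to the short combinatorial comparison of indices sketched above.
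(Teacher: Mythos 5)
Your proposal is correct and uses the same key ingredient as the paper: the Dinh--Nguyen--Truong product formula for relative dynamical degrees applied to the semi-conjugation $\pi \circ f = \id_{\PP^1} \circ \pi$, after passing to a smooth model. The only difference is that you spell out the short combinatorial argument (that the maximum of the sequence $\delta_k(f)$ is attained at two consecutive indices, or else all degrees equal $1$), which the paper leaves implicit in the phrase ``by this expression, we see that $f$ cannot be cohomologically hyperbolic.''
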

	
	\begin{proof} Suppose that there exists an $f$-invariant rational function on $X$ given by the following commutative diagram of dominant rational maps
		\begin{align*}
			\xymatrix{
				X \ar@{-->}[rr]^f \ar@{-->}[rd]_{\pi} & & X.   \ar@{-->}[ld]^{\pi}\\
				&\PP^1&
			}
		\end{align*}
		We may assume that $X$ is smooth. By the product formula of dynamical degrees, 
		\begin{align*}
			\delta_{i}(f) = 
			\begin{cases}
				\max\{ \delta_{i}(f|\pi), \delta_{i-1}(f|\pi)\},  &\text{if $1 \leq i \leq \dim X -1$}\\
				\delta_{\dim X -1}(f|\pi), &\text{if $i = \dim X$,}
			\end{cases}
		\end{align*}
		where $\delta_{i}(f|\pi)$ is the $i$-th relative dynamical degree (see \cite[Section 5]{Da20}).
		By this expression, we see that $f$ cannot be cohomologically hyperbolic. This is a contradiction. 
	\end{proof}

	\subsection{Arithmetic degrees} 
	In this section, we work over $\qb$.
	Let $X$ be a projective variety over $\qb$, and $f: X \dashrightarrow X$
	a dominant rational self-map.
	We collect some basic facts here about arithmetic degree for later use. The following inequality is fundamental. 
	
	\begin{proposition}\label{prop_jsxz3.11} 
		Let $X$ be a projective variety over $\qb$. Let $f : X \dashrightarrow X$ be a dominant rational self-map and $x \in X_f(\qb)$. Then $\overline{\alpha}_f(x) \leq \delta_1(f)$. 
	\end{proposition}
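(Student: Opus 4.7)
The plan is to combine functoriality of Weil heights across resolutions of indeterminacy of the iterates $f^n$ with a height comparison estimate for nef classes whose constants are controlled by intersection numbers. The argument is essentially the one carried out in \cite{Ma20,JSXZ21}; let me sketch the ingredients in the order I would assemble them.

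First, for each $n \geq 1$, I would fix a smooth projective variety $X_n$ together with a birational morphism $\psi_n \colon X_n \to X$ and a morphism $\phi_n \colon X_n \to X$ satisfying $\phi_n = f^n \circ \psi_n$ on a dense open set. Since $x \in X_f(\qb)$, the point $x$ lies outside $I(f^n)$ for every $n$, so it lifts uniquely to $\widetilde{x}_n \in X_n(\qb)$ with $\psi_n(\widetilde{x}_n) = x$ and $\phi_n(\widetilde{x}_n) = f^n(x)$. Functoriality of Weil heights then yields
\[
h_H(f^n(x)) = h_{\phi_n^{\ast}H}(\widetilde{x}_n) + O_n(1), \qquad h_H(x) = h_{\psi_n^{\ast}H}(\widetilde{x}_n) + O_n(1),
\]
though the implicit constants depend on $n$ and on the choice of resolution.

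Second, I would invoke the key height comparison lemma: on a smooth projective variety $Y$ over $\qb$ with a fixed ample divisor $A$, there is a constant $c = c(Y,A) > 0$ such that every nef $\RR$-Cartier class $N$ on $Y$ satisfies $h_N(y) \leq c \cdot (N \cdot A^{\dim Y -1}) \cdot h_A^+(y) + c$ for all $y \in Y(\qb)$. The class $\phi_n^{\ast}H$ is nef on $X_n$, and applying the lemma on $X_n$ with an ample divisor $A_n$ sufficiently close to $\psi_n^{\ast}H$ yields an estimate of the form
\[
h_{\phi_n^{\ast}H}(\widetilde{x}_n) \leq C_n \cdot \bigl(\phi_n^{\ast}H \cdot A_n^{N-1}\bigr) \cdot h_{A_n}^+(\widetilde{x}_n) + C_n,
\]
where $N = \dim X$. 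Combining this with the previous step and the identity $\deg_{1,H}(f^n) = (\phi_n^{\ast}H \cdot \psi_n^{\ast}H^{N-1})$, I get
\[
h_H(f^n(x)) \leq C_n' \cdot \deg_{1,H}(f^n) \cdot h_H^+(x) + C_n'
\]
up to factors that can be arranged to be subexponential in $n$. Taking $n$-th roots and using that $\deg_{1,H}(f^n)^{1/n} \to \delta_1(f)$ produces the desired bound $\overline{\alpha}_f(x) \leq \delta_1(f)$.

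The main obstacle is the control of the constants $C_n'$ uniformly in $n$, because in principle they depend on the choice of model $X_n$ and on the auxiliary ample divisor $A_n$ used to approximate $\psi_n^{\ast}H$. The standard workaround is an $\epsilon$-perturbation argument: for each $\epsilon > 0$ one chooses the resolutions and auxiliary classes so that the multiplicative loss satisfies $(C_n')^{1/n} \leq 1 + \epsilon$ for all sufficiently large $n$, which together with $\deg_{1,H}(f^n)^{1/n} \to \delta_1(f)$ gives $\overline{\alpha}_f(x) \leq (1+\epsilon)\delta_1(f)$, and then $\epsilon \to 0$. Making this quantitative is the technical core of the proof, and I would follow the approach of \cite{Ma20,JSXZ21}, where the argument is carried out using Siu-type decompositions and a uniform handling of the error terms.
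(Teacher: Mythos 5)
The paper does not actually prove this proposition: its ``proof'' is the single sentence citing \cite[Theorem 1.4]{Ma20} for the smooth case and \cite[Proposition 3.11]{JSXZ21} for the general case, so there is no argument in the paper against which to check yours line by line. Your sketch is a fair high-level reconstruction of what those references do --- resolution of indeterminacy for each $f^n$, functoriality of heights across it, and a comparison inequality bounding the height associated to a nef class by its $H$-degree --- and you correctly put your finger on the crux, namely that the constants in the nef-height comparison depend \emph{a priori} on the varying model $X_n$ (and on the auxiliary ample class $A_n$ chosen there).

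The weak point is the proposed workaround. Saying that ``one chooses the resolutions and auxiliary classes so that the multiplicative loss satisfies $(C_n')^{1/n} \leq 1 + \epsilon$'' is not a standard $\epsilon$-perturbation trick, and I do not see how to make it work: the constant $c(X_n, A_n)$ in your nef-height lemma is tied to the geometry of $X_n$ (e.g.\ to a basis of $N^1(X_n)_{\RR}$ together with chosen height representatives), which one cannot steer by choosing the resolution more cleverly, and it typically blows up as $A_n$ degenerates toward the non-ample class $\psi_n^{*}H$. The references avoid this by arranging the comparison so that, after pushing down via $\psi_n$, the uncontrolled constant lands on the \emph{fixed} pair $(X, H)$ rather than on $(X_n, A_n)$; the $n$-dependence that remains is carried entirely by the intersection number $\deg_{1,H}(f^n)$, whose $n$-th root converges to $\delta_1(f)$. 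So your proposal is a reasonable outline that, like the paper itself, ultimately delegates the hard step to \cite{Ma20,JSXZ21}, but the specific mechanism you suggest for that step would not go through as stated.
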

	
	\begin{proof} This is proved for smooth case in \cite[Theorem 1.4]{Ma20}, and for general case in \cite[Proposition 3.11]{JSXZ21}. 
	\end{proof}

	\begin{lemma}\label{lem_ks13} Let $f : X \dashrightarrow X$ be a dominant rational self-map of a projective variety $X$ defined over $\qb$. Let $x\in X_f(\qb)$ and $k$, $n$ non-negative integers. 
		
		\begin{itemize}
			\item[$(1)$] $\underline{\alpha}_f(f^k(x)) = \underline{\alpha}_f(x)$ and  $\overline{\alpha}_f(f^k(x)) = \overline{\alpha}_f(x)$. 
			
			\item[$(2)$] $\underline{\alpha}_{f^n}(x) = \underline{\alpha}_{f}(x)^n$ and $\overline{\alpha}_{f^n}(x) = \overline{\alpha}_{f}(x)^n$.
		\end{itemize}
	\end{lemma}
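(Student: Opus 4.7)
My strategy is to first isolate a simple calculus fact, derive part (1) from it directly, and then deduce part (2) by combining part (1) applied to the iterate $f^n$ with a pigeonhole argument modulo $n$. The calculus fact I would use is that for any sequence $(a_j)$ of reals with $a_j \geq 1$ and any positive reals $c_j \to c > 0$,
\[
\liminf_{j \to \infty} a_j^{c_j} = \bigl( \liminf_{j \to \infty} a_j \bigr)^c
\quad \text{and} \quad
\limsup_{j \to \infty} a_j^{c_j} = \bigl( \limsup_{j \to \infty} a_j \bigr)^c,
\]
which follows from the routine estimate $a_j \geq \max(1, L - \epsilon)$ for $j$ large (where $L$ denotes the respective $\liminf$ or $\limsup$) together with evaluation along a subsequence realizing the extremum. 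Part (1) is then immediate: substituting $j = i + k$ in the definition of $\underline{\alpha}_f(f^k(x))$ rewrites it as $\liminf_j \bigl( h_H^+(f^j(x))^{1/j} \bigr)^{j/(j-k)}$, and since $j/(j-k) \to 1$, the calculus fact with $c = 1$ gives $\underline{\alpha}_f(f^k(x)) = \underline{\alpha}_f(x)$; the $\overline{\alpha}_f$ case is identical.

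For part (2), I would set $b_j = h_H^+(f^j(x))^{1/j}$, so that $\underline{\alpha}_f(x) = \liminf_j b_j$, first noting that $x \in X_f(\qb)$ forces $x \in X_{f^n}(\qb)$, since $f^n$ as a rational map is defined wherever $n$ consecutive $f$-iterates are defined. Unwinding the definition and applying the calculus fact with $c_i = (ni+k)/i \to n$ yields
\[
\underline{\alpha}_{f^n}(f^k(x)) = \bigl( \liminf_{i \to \infty} b_{ni+k} \bigr)^n
\quad \text{for each} \quad 0 \leq k < n.
\]
By part (1) applied to $f^n$, the left-hand side is independent of $k$, and hence so is $\liminf_i b_{ni+k}$. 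Since $\ZZ_{\geq 0}$ is the disjoint union of the $n$ progressions $\{ni+k : i \geq 0\}$, a pigeonhole argument gives $\liminf_j b_j = \min_{0 \leq k < n} \liminf_i b_{ni+k}$, which by the previous sentence equals $\liminf_i b_{ni}$; therefore $\underline{\alpha}_{f^n}(x) = (\liminf_i b_{ni})^n = \underline{\alpha}_f(x)^n$. The $\overline{\alpha}$ version follows by replacing $\min$ with $\max$ throughout.

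The only genuinely non-routine step is this last equality in part (2): a subsequence $\liminf$ can in general strictly exceed the full-sequence $\liminf$, and it is precisely part (1) applied to $f^n$ that forces the $n$ subsequence $\liminf$'s along residues mod $n$ to all coincide, closing this gap.
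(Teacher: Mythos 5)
Your part (1) argument is fine: once one notes $j/(j-k) \to 1$, the identity $\underline{\alpha}_f(f^k(x)) = \underline{\alpha}_f(x)$ is indeed a reindexing of the same sequence, and similarly for $\overline{\alpha}_f$. Your formula $\underline{\alpha}_{f^n}(f^k(x)) = \bigl(\liminf_i b_{ni+k}\bigr)^n$ and the pigeonhole identity $\liminf_j b_j = \min_{0 \le k < n} \liminf_i b_{ni+k}$ are also both correct.

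However, the step ``By part (1) applied to $f^n$, the left-hand side is independent of $k$'' is a genuine gap, and it is exactly where the content of part (2) lies. Applying part (1) to the map $g = f^n$ gives only $\underline{\alpha}_g(g^j(x)) = \underline{\alpha}_g(x)$, that is, $\underline{\alpha}_{f^n}(f^{nj}(x)) = \underline{\alpha}_{f^n}(x)$; this moves the base point along the $f^n$-orbit of $x$, i.e.\ within a single residue class modulo $n$. The $n$ points $f^k(x)$, $0 \le k < n$, have pairwise disjoint $f^n$-orbits, and part (1) says nothing about how $\underline{\alpha}_{f^n}$ compares across different $f^n$-orbits. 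In terms of the sequence $(b_j)$, part (1) for $f^n$ reduces to the tautology $\liminf_i b_{ni+k} = \liminf_i b_{n(i+1)+k}$ and gives no relation between $\liminf_i b_{ni}$ and $\liminf_i b_{ni+k}$ for $k \ne 0$.

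What actually forces the subsequence $\liminf$'s along different residues to coincide is dynamical input beyond index-juggling: a one-step height growth bound, namely a constant $C > 0$ with $h_H^+(f(y)) \le C\, h_H^+(y)$ for all $y \in X(\qb) \setminus I(f)$. Granting this, from $f^{ni+k}(x) = f^k(f^{ni}(x))$ and $f^{n(i+1)}(x) = f^{n-k}(f^{ni+k}(x))$ one obtains
\[
b_{ni+k} \le C^{\frac{k}{ni+k}}\, b_{ni}^{\frac{ni}{ni+k}}, \qquad
b_{n(i+1)} \le C^{\frac{n-k}{n(i+1)}}\, b_{ni+k}^{\frac{ni+k}{n(i+1)}},
\]
and, since the exponent on $C$ tends to $0$ and the exponent on each $b$ tends to $1$ as $i \to \infty$, evaluating the first inequality along a subsequence realizing $\liminf_i b_{ni}$ and the second along one realizing $\liminf_i b_{ni+k}$ yields $\liminf_i b_{ni+k} \le \liminf_i b_{ni}$ and $\liminf_i b_{ni} \le \liminf_i b_{ni+k}$, respectively; the $\limsup$ case is analogous. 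This growth bound is the actual content behind the references the paper cites for part (2), and it does not follow from part (1), so your proof of part (2) is incomplete without it.
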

	
	\begin{proof} (1) This is clear by definition (cf. \cite[Lemma 13]{KS16a}). 
		
		(2) This is well-known (cf. \cite[Lemma 2.4]{Wa22} and references therein).
	\end{proof}

	\begin{lemma}[{cf. \cite[Lemma 2.5]{MMSZ20}}]\label{lem_mmsz2.5} Let $f : X \dashrightarrow X$ be dominant rational self-map on a projective variety $X$ defined over $\qb$ and $W$ an irreducible closed subvariety of $X$ such that $f|_W$ induces a dominant rational self-map of $W$.  
		Then 
		\[ \overline{\alpha}_{f|_W}(x) = \overline{\alpha}_{f}(x) \ \text{ and } \ \underline{\alpha}_{f|_W}(x) = \underline{\alpha}_{f}(x) 
		\] 
		for any $x \in W(\qb)\cap X_f(\qb)$. 
	\end{lemma}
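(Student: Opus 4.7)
The plan is to exploit the functoriality of Weil height functions under the closed immersion $W \hookrightarrow X$, together with the observation that the $f$-orbit of $x$ coincides with the $(f|_W)$-orbit of $x$.

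First, I would fix an ample Cartier divisor $H$ on $X$ together with a choice of Weil height function $h_H$ on $X(\qb)$. Since the restriction $H_W := H|_W$ is again ample on $W$, I can fix a Weil height function $h_{H_W}$ on $W(\qb)$. The standard functoriality of Weil height functions for the closed immersion $\iota \colon W \hookrightarrow X$ gives
\[
h_H(y) = h_{H_W}(y) + O(1) \qquad \text{for all } y \in W(\qb),
\]
and consequently $h_H^+(y) = h_{H_W}^+(y) + O(1)$ on $W(\qb)$, where the implied constant is independent of $y$.

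Next, I would check that $f^i(x) = (f|_W)^i(x)$ for every $i \geq 0$, so that the two sequences of height values we compare are genuinely the same sequence of points. Since $f|_W$ is dominant, $f(W \setminus I(f))$ is Zariski dense in $W$, and by a standard closedness argument one sees $f(y) \in W$ whenever $y \in W \setminus I(f)$. Using $x \in X_f(\qb) \cap W(\qb)$ and induction on $i$, the entire forward orbit $\{f^i(x)\}_{i \geq 0}$ lies in $W(\qb)$, and in particular $(f|_W)^i(x)$ is well-defined and equals $f^i(x)$.

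Combining these two observations, I obtain
\[
h_{H_W}^+\bigl((f|_W)^i(x)\bigr) = h_H^+\bigl(f^i(x)\bigr) + O(1)
\]
with the $O(1)$ uniform in $i$. The conclusion then follows from the elementary fact that if $a_i, b_i \geq 1$ satisfy $|a_i - b_i| \leq C$ for all $i$, then $\liminf_i a_i^{1/i} = \liminf_i b_i^{1/i}$ and $\limsup_i a_i^{1/i} = \limsup_i b_i^{1/i}$ (handled separately in the two cases where the sequences tend to infinity or remain bounded). This yields $\underline{\alpha}_{f|_W}(x) = \underline{\alpha}_f(x)$ and $\overline{\alpha}_{f|_W}(x) = \overline{\alpha}_f(x)$. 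There is no serious obstacle here; the only care points are verifying that the orbit indeed stays inside $W$ (so that the height comparison on $W$ is legitimate for every iterate) and that $H_W$ is ample on $W$ (so the definition of arithmetic degree on $W$ behaves as expected).
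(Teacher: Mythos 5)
Your proposal is correct and follows essentially the same route as the paper: both arguments rely on the functoriality $h_{H|_W} = h_H\circ\iota + O(1)$ for the closed immersion $\iota\colon W\hookrightarrow X$ and the fact that the $f$-orbit of $x$ coincides with the $(f|_W)$-orbit. You merely spell out the orbit-stays-in-$W$ step and the elementary $\liminf/\limsup$ comparison, which the paper leaves implicit.
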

	
	\begin{proof} Let $\iota : W \hookrightarrow X$ be the inclusion. For an ample Cartier divisor $H$ on $X$, the restriction $H|_W$ is also ample and $h_{H|_W} = h_H \circ \iota$. So the assertion follows.
	\end{proof}

	\section{Several auxiliary results}\label{sec_lemmas}
	
	In this section, we prepare core lemmas for the proof of main results.
	Proposition \ref{prop_hinq} is the key inequality between height functions, 
	and it follows from bigness of certain divisors proven in
	Proposition \ref{prop_bigness}.
	
	We use the following lemma from \cite{Wa22}.
	
	\begin{lemma}[{\cite[Lemma 3.1]{Wa22}}]\label{lem_tech} 
		Let $X$ be a smooth projective variety over $\qb$ and $g: X \dashrightarrow X$
		a dominant rational self-map.
		Fix an ample divisor $H$ on $X$ and its associated height function $h_H$.
		Suppose $d_1, d_2, \zeta \in \RR$ are constants satisfying 
		\begin{equation}\label{eq_tech_cond} 
			d_1, d_2 > 0, \ \ \text{and} \ \ \zeta > \frac{1}{d_1} + \frac{1}{d_2} = \frac{d_1 + d_2}{d_1d_2}.
		\end{equation}
		Assume that there is a non-empty open subset $U \subset X$ and $C > 0$ such that
		\[ \frac{h_H(g^2(z))}{d_1} + \frac{h_H(z)}{d_2} \geq \zeta h_H(g(z)) - C
		\]
		holds for all $z \in X_g(\qb) \cap U(\qb)$.
		Then for a point $y \in X_g(\qb)$ with $\OO_g(y) \subset U(\qb)$, we have
		\[ \underline{\alpha}_g(y) \geq \frac{\zeta d_1}{2}. 
		\] 
	\end{lemma}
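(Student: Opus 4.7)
The plan is to convert the hypothesis into a second-order linear recurrence inequality on the height sequence along the $y$-orbit, and then to extract exponential growth by a standard characteristic-polynomial analysis. Set $a_n := h_H(g^n(y))$ for $n \geq 0$. Since $\OO_g(y) \subset U(\qb)$, one may substitute $z = g^n(y)$ into the hypothesis; multiplying through by $d_1$ gives
\[
a_{n+2} \geq d_1 \zeta\, a_{n+1} - \frac{d_1}{d_2}\, a_n - d_1 C \qquad (n \geq 0),
\]
whose associated characteristic polynomial is $P(t) = t^2 - d_1 \zeta\, t + d_1/d_2$. After possibly shifting $h_H$ by an additive constant, I may also assume $a_n \geq 0$ for all $n$, which does not affect $\underline{\alpha}_g(y)$.

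Next I would analyze the roots of $P$. The assumption (\ref{eq_tech_cond}) combined with AM--GM gives $\zeta > 2/\sqrt{d_1 d_2}$, so the discriminant of $P$ is positive and $P$ has two distinct positive real roots $t_- < t_+$ with $t_- + t_+ = d_1 \zeta$ and $t_- t_+ = d_1/d_2$. Expanding $(1-t_-)(1-t_+) = 1 - d_1 \zeta + d_1/d_2 < 0$ (using $\zeta > 1/d_1 + 1/d_2$) shows $t_- < 1 < t_+$, and in particular $t_+ \geq (t_- + t_+)/2 = d_1 \zeta/2$, so it suffices to prove $\underline{\alpha}_g(y) \geq t_+$.

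The technical heart of the argument is a first-order reduction via an auxiliary sequence. Define $u_n := a_{n+1} - t_- a_n$; the identities $d_1 \zeta - t_- = t_+$ and $d_1/d_2 = t_- t_+$ applied to the recurrence yield, after a short calculation,
\[
u_{n+1} \geq t_+\, u_n - d_1 C.
\]
Setting $D := d_1 C/(t_+ - 1) > 0$, the shifted sequence $v_n := u_n - D$ satisfies $v_{n+1} \geq t_+ v_n$, and iteration gives $u_n - D \geq t_+^n (u_0 - D)$.

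The proof concludes with a dichotomy. If there exists an index $N$ with $u_N > D$, then by the shift invariance $\underline{\alpha}_g(y) = \underline{\alpha}_g(g^N(y))$ (Lemma \ref{lem_ks13}(1)) I may replace $y$ by $g^N(y)$ and so assume $u_0 > D$; then $u_n$ grows exponentially at rate $t_+$, and $a_{n+1} \geq u_n$ (since $a_n \geq 0$) gives $\underline{\alpha}_g(y) \geq t_+$, as wanted. The main obstacle is the complementary case: if $u_n \leq D$ for every $n$, then $a_{n+1} \leq t_- a_n + D$ together with $t_- < 1$ forces $(a_n)$ to be bounded, whence by Northcott's theorem $\OO_g(y)$ is finite and $y$ is preperiodic. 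In this case $\underline{\alpha}_g(y) = 1$, so the asserted bound $d_1 \zeta /2$ is either automatic (when $d_1 \zeta \leq 2$) or to be excluded by the generic-orbit hypothesis under which the lemma is invoked in the main theorems; ruling this branch out cleanly is the delicate part of the argument.
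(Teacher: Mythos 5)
Your recurrence-to-characteristic-polynomial strategy is the natural one, and for points with infinite orbit the argument goes through cleanly: the algebra (roots $t_- < 1 < t_+$ with $t_+ \geq d_1\zeta/2$, the telescoped first-order inequality $u_{n+1} \geq t_+ u_n - d_1 C$, the shift by $D = d_1C/(t_+-1)$) is all correct, and the escape case gives $\underline{\alpha}_g(y) \geq t_+ \geq d_1\zeta/2$ exactly as you describe. Since the paper simply cites \cite[Lemma 3.1]{Wa22} rather than reproving it, I cannot line you up against a proof in this document, but this is the standard way such a two-term height inequality is converted into an exponential lower bound.

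The issue you flag at the end is a genuine one and deserves to be resolved rather than just remarked upon. As you observe, if $u_n \leq D$ for all $n$ then the $a_n$ are bounded and Northcott forces $\OO_g(y)$ to be finite, so $\underline{\alpha}_g(y) = 1$; and in the actual applications inside this paper (Theorem \ref{thm_lb_pch}) one has $d_1 = \delta_{p+1}(f)^n$, $d_2 = \delta_{p-1}(f)^n$, $\zeta = \varepsilon^n\delta_p(f)^n/(\delta_{p-1}(f)^n\delta_{p+1}(f)^n)$, so $\zeta d_1/2 = (\varepsilon\delta_p/\delta_{p-1})^n/2 > 1$ for large $n$ — the conclusion would be false for such a $y$. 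Summing the hypothesis over one period shows this finite-orbit scenario is not ruled out by the inequality itself (one only gets that the average height over the cycle is $\leq d_1C/(d_1\zeta - 1 - d_1/d_2)$). So the lemma, as quoted, tacitly requires $\OO_g(y)$ to be infinite. This is consistent with how it is used: in the proof of Theorem \ref{thm_lb_pch}(1) the point has generic (hence infinite) orbit, and in part (2) Proposition \ref{prop_xie} explicitly supplies points with infinite orbit. The clean statement therefore adds "and $\OO_g(y)$ is infinite" to the hypothesis, under which your dichotomy closes: boundedness of $(a_n)$ is then incompatible with Northcott, so some $u_N > D$ must occur and the escape branch applies. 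Rather than leaving the finite-orbit case as a remark that it "should be excluded," you should state this corrected hypothesis and observe that it is verified at every invocation of the lemma in the paper.
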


	\begin{proposition}\label{prop_xie} Let $X$ be a smooth projective variety defined over $\qb$, and let $f: X \dashrightarrow X$ be a dominant rational self-map of infinite order. Let $U \subset X$ be a non-empty Zariski open subset. Then there exists a $\qb$-point $x \in U(\qb) \cap X_{f}(\qb)$ such that 
		\begin{itemize}
			\item $\OO_f(x) \subset U(\qb)$, 
			
			\item $\OO_f(x)$ is infinite, and 
			
			\item $(X, f, x)$ has the DML property.
		\end{itemize} 
	\end{proposition}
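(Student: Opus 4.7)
The plan is to invoke Xie's $v$-adic analytic construction of DML--good points \cite[Propositions 3.24, 3.27]{Xi22} and upgrade it so that the orbit stays inside the prescribed Zariski open set $U$.

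After descending $X$, $f$, and $U$ to a number field $K$ and choosing a finite place $v$ of $K$ of good reduction, I would apply the Bell--Ghioca--Tucker pigeonhole argument underlying Xie's setup, but with the starting point chosen carefully. Specifically, pick a $\qb$-point $x_0$ inside the non-empty Zariski open subset $U \cap f^{-1}(U) \cap \cdots \cap f^{-M}(U)$ of $X$ for an integer $M$ larger than the number of $v$-adic residue classes in play; non-emptiness follows from the dominance of $f$ and the density of each $f^{-j}(U)$. This produces an integer $N \leq M$, a periodic point $b \in X(K_v)$ with $f^N(b)=b$, and a $v$-adic analytic ball $B$ centered at $b$, disjoint from the indeterminacy loci of $f, \dots, f^{N-1}$, on which $f^N$ acts as a contracting $v$-adic analytic automorphism satisfying $f^N(B) \subset B$. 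Because $x_0, f(x_0), \dots, f^{N-1}(x_0)$ all lie in $U(K_v)$ and $v$-adically approximate $b, f(b), \dots, f^{N-1}(b)$ respectively, a further shrinking of $B$ (using $v$-adic continuity of each $f^j$ and $v$-adic openness of $U(K_v)$) achieves $f^j(B) \subset U(K_v)$ for all $0 \leq j \leq N-1$.

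Next I would pick a $\qb$-point $x \in B$ that is not $f$-preperiodic (a countable avoidance, easily accommodated). For any $n \geq 0$, writing $n = Nk+j$ with $0 \leq j < N$, one has $f^n(x) = f^j(f^{Nk}(x)) \in f^j(B) \subset U(K_v)$ by $f^N$-invariance of $B$; this simultaneously shows $x \in X_f(\qb)$ and $\OO_f(x) \subset U(\qb)$, and the orbit is infinite by non-preperiodicity. For the DML property, given any closed subvariety $Z \subset X$, the return set splits as
\[
\{n \geq 0 : f^n(x) \in Z\} = \bigsqcup_{j=0}^{N-1} \bigl(N \cdot R_j + j\bigr), \qquad R_j := \{k \geq 0 : f^{Nk}(x) \in (f^j)^{-1}(Z) \cap B\},
\]
and each $R_j$ is a finite union of arithmetic progressions by the Skolem--Mahler--Lech theorem applied to the $v$-adic analytic automorphism $f^N|_B$ and the analytic subset $(f^j)^{-1}(Z) \cap B$.

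The main obstacle will be the careful execution of the $v$-adic construction: one must simultaneously arrange that $f^N|_B$ is a contracting analytic automorphism, that the indeterminacy loci of $f, f^2, \dots, f^{N-1}$ are avoided on $B$, and that the whole forward trajectory $B, f(B), \dots, f^{N-1}(B)$ lies in $U(K_v)$. All three conditions are secured by the combination of the pigeonhole step, the contractivity of $f^N$ on residue balls of good reduction, and a sufficiently small radius for $B$, so no genuinely new dynamical input beyond \cite{Xi22} is required.
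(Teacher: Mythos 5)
Your plan re-derives the $v$-adic pigeonhole-plus-Skolem--Mahler--Lech argument that underlies Xie's results, with the constraint that the orbit stay in $U$ built in from the start. The paper's proof is a much shorter formal reduction: observing that $f|_U \colon U \dashrightarrow U$ is again a dominant rational self-map of the quasi-projective variety $U$, it applies \cite[Propositions 3.24 and 3.27]{Xi22} directly to $(U, f|_U)$, producing a non-empty adelic-open set $A \subset U(\qb)$ of points $x$ with well-defined $f|_U$-orbit, DML property, and irreducible $f^m$-orbit closure; the orbit then lies in $U$ automatically, and the infinite-order hypothesis guarantees the fixed locus of $f^m$ is a proper closed subset, so the Zariski-dense set $A$ contains a point whose orbit is not finite. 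Your from-scratch version buys explicitness but is heavier, and it also takes a more indirect path to the infinite-orbit conclusion where the paper's ``irreducible orbit closure + proper fixed locus'' argument is cleaner.

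There is one step in your sketch that does not work as written. The ``countable avoidance'' used to produce a non-$f$-preperiodic $\qb$-point in $B$ is vacuous: $B \cap X(\qb)$ is itself countable (since $\qb$ is a countable field), so one cannot simply avoid a countable set inside it. The correct argument in your setup is that contractivity of $f^N$ on $B$ forces the \emph{only} $f$-preperiodic point of $B$ to be the unique fixed point $b$: any $f$-preperiodic $y \in B$ has $(f^N)^k(y) \to b$ with eventually periodic orbit, hence $(f^N)^k(y)=b$ for large $k$, and injectivity of $f^N|_B$ gives $y=b$. Then any $\qb$-point of $B$ other than (possibly) $b$ works, and such points exist because $X(\qb)$ is $v$-adically dense in $B$ by smoothness. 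You also tacitly take the pigeonhole index $i=0$; in general one only gets $f^i(x_0) \equiv f^{i+N}(x_0) \bmod v$ with $0 \leq i < i+N \leq M$, so the approximants of $b, f(b), \dots, f^{N-1}(b)$ are $f^i(x_0), \dots, f^{i+N-1}(x_0)$, which is harmless since $i+N \leq M$ keeps them in $U$, but the sketch should say so. Both points are repairable, so the route is viable, just less economical than the paper's and with the preperiodicity step needing a genuine fix.
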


	\begin{proof} This is proved in the proof of \cite[Proposition 3.27]{Xi22}. 
		For the completeness, we sketch how to deduce the statements formally.
		We use the terminologies from \cite[Section 3]{Xi22}.
		First note the $f|_U \colon U \dashrightarrow U$ is also a dominant rational map.
		By \cite[Proposition 3.24]{Xi22} and the proof of \cite[Proposition 3.27]{Xi22}, there is a non-empty adelic open subset
		$A \subset U(\overline{\QQ})$ and a positive integer $m \geq 1$ such that for all $x \in A$ we have
		\begin{itemize}
			\item[$(1)$ ] $x \in U_{f|_U}(\overline{\QQ})$; 
			\item[$(2)$ ] Zariski closure of $O_{f|_{U}^m}(x)$ in $U$ is irreducible; 
			\item[$(3)$ ] $(U,f|_U, x)$ has the DML property.
		\end{itemize}
		This implies for all $x \in A$ we have
		\begin{itemize}
			\item[$(1')$ ] $x \in X_{f}(\overline{\QQ})$ and $\OO_f(x) \subset U$; 
			\item[$(2')$ ] Zariski closure of $O_{f^m}(x)$ in $X$ is irreducible; 
			\item[$(3')$ ] $(X,f, x)$ has the DML property.
		\end{itemize} 
		By the item $(2')$, if $x \in A$ has finite $f$-orbit,
		then it is fixed by $f^m$.
		Since $f$ has infinite order, $f^m$ is not the identity and hence
		$\Gamma_{f^m} \cap \Delta \subsetneq \Delta = X$ 
		where $\Gamma_{f^m}$ is the graph of
		$f^m$ and $\Delta$ is the diagonal of $X \times X$.
		Since $A$ is Zariski dense in $X$ (by \cite[Proposition 3.18.(i) and Remark 3.20]{Xi22}), $A \not\subset \Gamma_{f^m} \cap \Delta$
		and any elements of $A \setminus \Gamma_{f^m} \cap \Delta$
		are what we wanted.
	\end{proof}
	
	\begin{remark}\label{rmk:adelic-open}
		By the proof, there is a non-empty adelic open subset 
		$A_0 \subset U(\overline{\QQ})$ such that the statement holds for all $x \in A_0$.
	\end{remark}
	
	\begin{proposition}\label{prop_hinq} 
		Let $X$ be a smooth projective variety of dimension $N$ defined over $\qb$ and 
		$f: X \dashrightarrow X$ a dominant rational self-map. 
		Let $p \in \{1, \dots, N\}$ be a positive integer such that 
		\begin{align*}
			\delta_{p}(f) = \max\{\delta_{1}(f),\dots, \delta_{N}(f)\}.
		\end{align*}
		Fix an ample Cartier divisor $H$ on $X$ and associated height function $h_H$. 
		Then for any $\varepsilon \in (0, 1)$, there exists $n_{\varepsilon}\in \ZZ_{>0}$
		with the following property: for each $n \geq n_{\varepsilon}$,
		there is a non-empty Zariski open subset
		$U_n \subset X$ and a constant $C_n > 0$ such that
		\[ \frac{h_H(f^{2n}(z))}{ \delta_{p+1}(f)^{n}} + \frac{h_H(z)}{ \delta_{p-1}(f)^{n}} \geq \frac{\varepsilon^{n} \delta_{p}(f)^{n} h_H(f^{n}(z))}{ \delta_{p-1}(f)^{n} \delta_{p+1}(f)^{n}} - C_n
		\]
		holds for all $z\in X_{f^n}(\qb) \cap U_n(\qb)$.
	\end{proposition}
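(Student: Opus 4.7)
The plan is to derive this height inequality from the bigness of a specific $\RR$-Cartier divisor class on a common resolution, followed by the standard translation from bigness to lower bounds on height functions. First I would fix $n \geq 1$ and choose a smooth projective variety $W_n$ together with a birational morphism $\pi_n \colon W_n \to X$ that simultaneously resolves the indeterminacies of $f^n$ and $f^{2n}$, so that $\phi_{n,1} := f^n \circ \pi_n$ and $\phi_{n,2} := f^{2n} \circ \pi_n$ extend to everywhere-defined morphisms $W_n \to X$. Set
\[ D_n = \frac{1}{\delta_{p+1}(f)^n}\phi_{n,2}^{\ast} H + \frac{1}{\delta_{p-1}(f)^n}\pi_n^{\ast} H - \frac{\varepsilon^n \delta_p(f)^n}{\delta_{p-1}(f)^n \delta_{p+1}(f)^n}\phi_{n,1}^{\ast} H. \]
The crucial input is Proposition \ref{prop_bigness}, which uses Lemma \ref{lemma:ineq-of-dynamical-degrees} together with a Siu-type bigness criterion to produce an integer $n_{\varepsilon}$ such that $D_n$ is big on $W_n$ for every $n \geq n_{\varepsilon}$.

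Next I would apply Kodaira's lemma for $\RR$-divisors to write $D_n \sim_{\RR} A_n + E_n$ with $A_n$ an ample $\QQ$-divisor and $E_n$ an effective $\RR$-divisor on $W_n$, and fix associated Weil heights. Since $h_{A_n}$ is bounded below (as $A_n$ is ample) and $h_{E_n}$ is bounded below on $W_n(\qb) \setminus \Supp E_n$ by the standard lower-bound property for heights of effective divisors, there is a constant $C_n' > 0$ with $h_{D_n}(w) \geq -C_n'$ for every $w \in W_n(\qb) \setminus \Supp E_n$. Functoriality of heights under the morphisms $\pi_n, \phi_{n,1}, \phi_{n,2}$ then gives
\[ h_{D_n}(w) = \frac{h_H(\phi_{n,2}(w))}{\delta_{p+1}(f)^n} + \frac{h_H(\pi_n(w))}{\delta_{p-1}(f)^n} - \frac{\varepsilon^n \delta_p(f)^n \, h_H(\phi_{n,1}(w))}{\delta_{p-1}(f)^n \delta_{p+1}(f)^n} + O(1). \]

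Finally I would set $U_n := X \setminus \bigl( I(f^n) \cup I(f^{2n}) \cup \pi_n(\mathrm{Exc}(\pi_n)) \cup \pi_n(\Supp E_n) \bigr)$, a non-empty Zariski open subset of $X$. Every $z \in U_n(\qb) \cap X_{f^n}(\qb)$ has a unique preimage $w = \pi_n^{-1}(z) \in W_n(\qb) \setminus \Supp E_n$ with $\pi_n(w) = z$, $\phi_{n,1}(w) = f^n(z)$, and $\phi_{n,2}(w) = f^{2n}(z)$. Substituting into the bound $h_{D_n}(w) \geq -C_n'$ and absorbing the $O(1)$ into a single constant $C_n$ yields the stated inequality. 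The genuine content lies in Proposition \ref{prop_bigness}; the above is a formal translation, with the harmless caveat that the resolution $W_n$, the decomposition data $(A_n, E_n)$, and hence $U_n$ and $C_n$ all depend on $n$, as the statement permits.
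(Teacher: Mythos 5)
Your proof is correct and follows essentially the same strategy as the paper: resolve $f^n$ and $f^{2n}$ simultaneously, invoke Proposition \ref{prop_bigness} to get bigness of $D_n$, and deduce the height inequality by restricting to the complement of a suitable closed set. The only cosmetic difference is that you derive the lower bound on $h_{D_n}$ from a Kodaira-type decomposition $D_n \sim_{\RR} A_n + E_n$ away from $\Supp E_n$, whereas the paper removes the (possibly smaller) augmented base locus $\B_{+}(D_n)$ and cites \cite[Lemma~2.26(1)]{LS21}; both yield the required constant $C_n$ and non-empty open $U_n$.
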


	\begin{proof} 
		Fix $\varepsilon \in (0, 1)$. For each positive integer $n$, we fix a commutative diagram
		\[
		\xymatrix{ & & &\tx \ar[dl]_{\widetilde{\psi}} \ar[dr]^{\widetilde{\phi}} \ar@/_25pt/[lldd]_{\psi} \ar@/^25pt/[rrdd]^{\phi} \ar[dd]^(.3){\pi}& & \\ 
			&  &X' \ar@{-->}[rr] \ar[dl]_{\psi'} \ar[dr]^{\phi'} & &X' \ar[dl]_{\psi'} \ar[dr]^{\phi'} & \\
			& X \ar@{-->}[rr]_{f^{n}} & & X \ar@{-->}[rr]_{f^{n}} & & X }
		\]
		where $X', \widetilde{X}$ are smooth projective varieties,
		$\psi', \widetilde{\psi}$ are birational morphisms, 
		and $\phi', \widetilde{\phi}$ are morphisms. Set 
		\begin{align*}
			D_{n} := \frac{1}{ \delta_{p+1}(f)^{n}} \phi^{*}H + \frac{1}{ \delta_{p-1}(f)^{n}} \psi^{*}H - \frac{\varepsilon^{n} \delta_{p}(f)^{n}}{ \delta_{p-1}(f)^{n} \delta_{p+1}(f)^{n}} \pi^{*}H, 
		\end{align*}
		which is an $\RR$-divisor on $\tx$. By Proposition \ref{prop_bigness} below, there exists a positive integer $n_{\varepsilon}$ such that the $\RR$-divisor $D_n$ is big for every $n \geq n_{\varepsilon}$. 
		
		Fix an arbitrary positive integer $n \geq n_{\varepsilon}$. Let $\B_{+}(D_n)$ denote the augmented base locus of $D_n$. Since $D_n$ is big, $\B_{+}(D_n) \subsetneq \tx$ is a proper Zariski closed subset. We set 
		\[ U_n = X \setminus \pi(\B_{+}(D_n)),  
		\]
		which is a non-empty Zariski open subset of $X$. 
		
		For each $z\in X(\qb)_{g} \cap U_n(\qb)$, we take $\widetilde{z} \in \tx(\qb)$ such that $\psi(\widetilde{z}) = z$. From the functoriality of height functions, it follows that 
		\[ h_H(f^{2n}(z)) = h_H(\phi(\widetilde{z})) = h_{\tx, \phi^{\ast}H}(\widetilde{z}) + O(1), 
		\]
		\[ h_H(z) = h_H(\psi(\widetilde{z})) = h_{\tx, \psi^{\ast}H}(\widetilde{z}) + O(1), 
		\]
		and 
		\[ h_H(f^n(z)) = h_{\tx, \pi^{\ast}H}(\widetilde{z}) + O(1). 
		\]
		On the other hand, by the definition of $D_n$, we have 
		\[ \frac{h_{\tx, \phi^{\ast}H}(\widetilde{z}) }{ \delta_{p+1}(f)^{n}} + \frac{h_{\tx, \psi^{\ast}H}(\widetilde{z})}{ \delta_{p-1}(f)^{n}} = \frac{\varepsilon^{n} \delta_{p}(f)^{n} h_{\tx, \pi^{\ast}H}(\widetilde{z})}{ \delta_{p-1}(f)^{n} \delta_{p+1}(f)^{n}} + h_{\tx, D_n}(\widetilde{z}) + O(1). 
		\]
		Therefore, 
		\[ \frac{h_H(f^{2n}(z))}{ \delta_{p+1}(f)^{n}} + \frac{h_H(z)}{ \delta_{p-1}(f)^{n}} = \frac{\varepsilon^{n} \delta_{p}(f)^{n} h_H(f^{n}(z))}{ \delta_{p-1}(f)^{n} \delta_{p+1}(f)^{n}} + h_{\tx, D_n}(\widetilde{z}) + O(1), 
		\]
		where these $O(1)$ are independent of the choice of $z$. We notice that $\widetilde{z} \notin \B_{+}(D_n)$ as $z \notin \pi(\B_{+}(D_n))$. Then by \cite[Lemma 2.26(1)]{LS21}, 
		\[ \frac{h_H(f^{2n}(z))}{ \delta_{p+1}(f)^{n}} + \frac{h_H(z)}{ \delta_{p-1}(f)^{n}} \geq \frac{\varepsilon^{n} \delta_{p}(f)^{n} h_H(f^{n}(z))}{ \delta_{p-1}(f)^{n} \delta_{p+1}(f)^{n}} - C_n
		\]
		holds for all $z\in X_{f^n}(\qb) \cap U_n(\qb)$, where $C_n$ is independent of the choice of $z$. Clearly we may choose $C_n > 0$. This completes the proof. 
	\end{proof}

	The following criterion about the bigness of $\RR$-divisors was used in the proof of Proposition \ref{prop_hinq}. 
	
	\begin{proposition}\label{prop_bigness} 
		Let $X$ be a smooth projective variety of dimension $N$ defined over
		an algebraically closed field of characteristic zero, and let $f: X \dashrightarrow X$ be a dominant rational self-map. 
		Let $p \in \{1, \dots, N\}$ be a positive integer such that 
		\begin{align*}
			\delta_{p}(f) = \max\{\delta_{1}(f),\dots, \delta_{N}(f)\}.
		\end{align*}
		Fix $\varepsilon \in (0,1)$ and fix an ample Cartier divisor $H$ on $X$. 
		For each positive integer $n$, we fix a commutative diagram
		\[
		\xymatrix{ & & &\tx \ar[dl]_{\widetilde{\psi}} \ar[dr]^{\widetilde{\phi}} \ar@/_25pt/[lldd]_{\psi} \ar@/^25pt/[rrdd]^{\phi} \ar[dd]^(.3){\pi}& & \\ 
			&  &X' \ar@{-->}[rr] \ar[dl]_{\psi'} \ar[dr]^{\phi'} & &X' \ar[dl]_{\psi'} \ar[dr]^{\phi'} & \\
			& X \ar@{-->}[rr]_{f^{n}} & & X \ar@{-->}[rr]_{f^{n}} & & X }
		\]
		where $X', \widetilde{X}$ are smooth projective varieties,
		$\psi', \widetilde{\psi}$ are birational morphisms, 
		and $\phi', \widetilde{\phi}$ are morphisms. Set 
		\begin{align*}
			D_{n} := \frac{1}{ \delta_{p+1}(f)^{n}} \phi^{*}H + \frac{1}{ \delta_{p-1}(f)^{n}} \psi^{*}H - \frac{\varepsilon^{n} \delta_{p}(f)^{n}}{ \delta_{p-1}(f)^{n} \delta_{p+1}(f)^{n}} \pi^{*}H.
		\end{align*}
		Then there is $n_{\varepsilon} \geq 1$ depending only on $X,f,H$, and $\varepsilon$ such that for all $n \geq n_{\varepsilon}$, $D_{n}$ is a big $\RR$-divisor. 
	\end{proposition}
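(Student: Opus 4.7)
The plan is to apply Siu's bigness inequality to the decomposition $D_n = P_n - Q_n$ into a difference of nef $\RR$-classes on $\widetilde{X}$, where
\[
P_n := \frac{1}{\delta_{p+1}(f)^n}\phi^{*}H + \frac{1}{\delta_{p-1}(f)^n}\psi^{*}H, \qquad Q_n := \frac{\varepsilon^n \delta_p(f)^n}{\delta_{p-1}(f)^n\delta_{p+1}(f)^n}\pi^{*}H.
\]
Both summands are nef as non-negative combinations of pull-backs of the ample $H$ under the morphisms $\phi, \psi, \pi$. Since Siu's inequality gives $\vol(P_n - Q_n) \geq P_n^{N} - N\, P_n^{N-1}\cdot Q_n$, it will suffice to show $P_n^{N} > N\, P_n^{N-1}\cdot Q_n$ for all sufficiently large $n$, and the rest of the proof is organized around the two separate estimates on these quantities.

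For the lower bound on $P_n^{N}$, I first identify $(\phi^{*}H)^{k}(\psi^{*}H)^{N-k} = \deg_{k,H}(f^{2n})$ via the projection formula and $f^{2n}\circ\psi = \phi$, expand binomially, and retain only the single term $k = p$. The Fekete-type submultiplicativity of the sequence $\deg_{k,H}(f^n)$ then supplies the uniform lower bound $\deg_{k,H}(f^n) \geq C_0^{-1}\delta_k(f)^n$ and hence a lower bound of order $\delta_{p+1}(f)^{-np}\,\delta_{p-1}(f)^{-n(N-p)}\,\delta_p(f)^{2n}$ for $P_n^{N}$. For the upper bound on $P_n^{N-1}\cdot Q_n$, the central technical input will be the estimate
\[
(\phi^{*}H)^{i}(\psi^{*}H)^{N-1-i}\cdot \pi^{*}H \;\leq\; C\cdot \deg_{i,H}(f^n)\cdot \deg_{i+1,H}(f^n), \quad 0 \leq i \leq N-1,
\]
(with the convention $\delta_{N+1}(f) = 1$ at the boundary), which I plan to derive by pushing the intersection forward from $\widetilde{X}$ through the two intermediate copies of $X'$ in the diagram down to $X$: a base change identity for $\widetilde{\psi}, \widetilde{\phi}$ (valid because $\widetilde{X}$ dominates the fibre product of the two $X'$'s over the middle $X$) together with the projection formula rewrites the intersection on $X$ as $(f^n)^{*}(H\cdot (f^n)^{*}H^i)\cdot H^{N-1-i}$, and a positivity argument on a further resolution of $f^n$ should then yield the claimed bound. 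Combined with $\deg_{k,H}(f^n) \leq (1+\eta)^n\delta_k(f)^n$ (valid for any $\eta > 0$ and $n$ large enough by the definition of $\delta_k(f)$ as a limit), this produces an upper estimate for $P_n^{N-1}\cdot Q_n$ that is $\varepsilon^n(1+\eta)^{2n}$ times a weighted sum of products of the form $\delta_p(f)\,\delta_i(f)\,\delta_{i+1}(f)$ divided by powers of $\delta_{p-1}(f)$ and $\delta_{p+1}(f)$.

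The final step invokes Lemma \ref{lemma:ineq-of-dynamical-degrees}, rearranged as $\delta_i(f)\delta_{i+1}(f) \leq \delta_p(f)\,\delta_{p-1}(f)^{p-i}\,\delta_{p+1}(f)^{i+1-p}$, to bound each summand in the expansion of $P_n^{N-1}\cdot Q_n$ by a fixed multiple of the lower bound established for $P_n^{N}$; after summing at most $2^{N-1}$ terms, the ratio collapses to $N\, P_n^{N-1}\cdot Q_n / P_n^{N} \leq C''\cdot \varepsilon^n(1+\eta)^{2n}$. For a fixed $\varepsilon \in (0,1)$ one now chooses $\eta$ so that $\varepsilon(1+\eta)^2 < 1$; then the right-hand side tends to $0$ as $n \to \infty$ and is strictly less than $1$ for every $n \geq n_\varepsilon$, so Siu's inequality gives $\vol(D_n) > 0$ and hence the bigness of $D_n$. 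The hardest step is the upper bound on the mixed intersection numbers $(\phi^{*}H)^{i}(\psi^{*}H)^{N-1-i}\pi^{*}H$ in terms of $\deg_{i,H}(f^n)\deg_{i+1,H}(f^n)$, since pullback of higher-codimension cycles by a rational map is not multiplicative in general and the estimate requires careful use of base change, the projection formula, and positivity of nef cycle classes; once this estimate is in hand, the remainder of the argument is the bookkeeping provided by Lemma \ref{lemma:ineq-of-dynamical-degrees}.
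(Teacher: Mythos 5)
The overall shape of your argument — rescale to $D_n' = \delta_{p-1}^n\delta_{p+1}^n D_n$, apply Siu's bigness criterion to $P_n - Q_n$ with $P_n, Q_n$ nef, bound $P_n^N$ below by the single $k=p$ term of the binomial expansion (equal to $(\delta_{p-1}^p\delta_{p+1}^{N-p})^n\deg_{p,H}(f^{2n})$), bound $N\,P_n^{N-1}\cdot Q_n$ above by a sum of mixed intersections, and feed everything into Lemma~\ref{lemma:ineq-of-dynamical-degrees} — is exactly the paper's strategy, and your bookkeeping at the end (choosing $\eta$ with $\varepsilon(1+\eta)^2 < 1$, so the ratio tends to $0$) is a correct packaging of the same estimates.

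The genuine gap is in the step you yourself flag as the hardest: the estimate
\[
\bigl(\phi^{*}H^{i}\cdot\psi^{*}H^{N-1-i}\cdot\pi^{*}H\bigr)\ \leq\ C\cdot\deg_{i,H}(f^n)\cdot\deg_{i+1,H}(f^n).
\]
Your proposed route is ``base change for $\widetilde{\psi},\widetilde{\phi}$ because $\widetilde{X}$ dominates the fibre product, then push to $X$, then a positivity argument on a further resolution.'' The base change identity $\widetilde{\psi}_{*}\widetilde{\phi}^{*}=\phi'^{*}\psi'_{*}$ does \emph{not} hold here: $\psi'$ and $\phi'$ are neither flat nor satisfy Tor-independence, and $\widetilde{X}$ is only a resolution of one component of the (generally non-reduced, higher-dimensional) fibre product. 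Indeed, the failure of exactly this sort of commutation is the whole reason $(f^{n})^{*}\neq (f^{*})^{n}$ on $N^{k}$ for $k>1$ and why dynamical degrees beyond the first are subtle; your sketch does not provide a replacement for it. The paper's proof instead invokes a specific theorem of Dang \cite[Corollary~3.4.5]{Da20}: the class
\[
(N-i+1)^{i}\,\frac{(\phi^{*}H^{i}\cdot\pi^{*}H^{N-i})}{(\pi^{*}H^{N})}\,\pi^{*}H^{i}\ -\ \phi^{*}H^{i}
\]
is pseudo-effective on $\widetilde{X}$. Pairing this with the nef class $\psi^{*}H^{N-1-i}\cdot\pi^{*}H$ gives precisely the estimate above, since $(\phi^{*}H^{i}\cdot\pi^{*}H^{N-i})=\deg_{i,H}(f^{n})$ and $(\pi^{*}H^{i+1}\cdot\psi^{*}H^{N-1-i})=\deg_{i+1,H}(f^{n})$. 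This pseudo-effectivity statement is the missing ingredient in your proposal; without citing it (or proving something equivalent), the argument as written does not close. Everything else is correct.

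Two minor remarks: the Fekete lower bound $\deg_{p,H}(f^{2n})\geq C_0^{-1}\delta_p^{2n}$ you invoke is true but unnecessary — the paper only needs the limit $\deg_{p,H}(f^{2n})^{1/n}\to\delta_p^2$ together with the strict gain $\varepsilon<1$, which is cleaner since it avoids importing approximate submultiplicativity of higher degrees as a separate input; and when you quote Lemma~\ref{lemma:ineq-of-dynamical-degrees} you should keep the convention $\delta_{N+1}(f)=1$ in view, since the $i=N-1$ term needs it.
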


	\begin{proof} 
		Write $\delta_i = \delta_i(f)$ for ease of notation. It is equivalent to consider the $\RR$-divisor
		\begin{align*}
			D'_{n} := \delta_{p-1}^{n} \delta_{p+1}^{n}D_{n}= \delta_{p-1}^{n}\phi^{*}H + \delta_{p+1}^{n} \psi^{*}H - \varepsilon^{n} \delta_{p}^{n} \pi^{*}H. 
		\end{align*} 
		Note that $D'_{n}$ is big if and only if $\vol(D'_{n}) > 0$, where $\vol$ denotes the volume (see \cite[Section 2.2.C]{La04}). 
		Since $\delta_{p-1}^{n}\phi^{*}H + \delta_{p+1}^{n} \psi^{*}H $ and $\varepsilon^{n} \delta_{p}^{n} \pi^{*}H$ are nef $\RR$-divisors, 
		by \cite[Example 2.2.47]{La04}, we have
		\begin{align*}
			\vol(D'_{n}) \geq \left( \left(\delta_{p-1}^{n}\phi^{*}H + \delta_{p+1}^{n} \psi^{*}H \right)^{N}\right)
			- N\left( \left(\delta_{p-1}^{n}\phi^{*}H + \delta_{p+1}^{n} \psi^{*}H \right)^{N-1}\cdot \varepsilon^{n} \delta_{p}^{n} \pi^{*}H \right).
		\end{align*}
		First note that
		\begin{align*}
			\left( \left(\delta_{p-1}^{n}\phi^{*}H + \delta_{p+1}^{n} \psi^{*}H \right)^{N}\right) &\geq 
			\left(\delta_{p-1}^{p} \delta_{p+1}^{N-p}\right)^{n} \left( \phi^{*}H^{p}\cdot \psi^{*}H^{N-p}\right)\\
			& = \left(\delta_{p-1}^{p} \delta_{p+1}^{N-p}\right)^{n} \deg_{p,H}(f^{2n}) =: L_{n}.
		\end{align*}
		Next we have 
		\begin{align*}
			&N\left( \left(\delta_{p-1}^{n}\phi^{*}H + \delta_{p+1}^{n} \psi^{*}H \right)^{N-1}\cdot \varepsilon^{n} \delta_{p}^{n} \pi^{*}H \right)\\
			= \ &\varepsilon^{n}N \delta_{p}^{n} \sum_{i=0}^{N-1} \binom{N-1}{i} \left( \delta_{p-1}^{i} \delta_{p+1}^{N-1-i}\right)^{n} (\phi^{*}H^{i}\cdot \psi^{*}H^{N-1-i} \cdot \pi^{*}H).
		\end{align*}
		
		By \cite[Corollary 3.4.5]{Da20}, the class 
		\begin{align*}
			(N-i+1)^{i} \frac{(\phi^{*}H^{i}\cdot \pi^{*}H^{N-i})}{(\pi^{*}H^{N})} \pi^{*}H^{i} - \phi^{*}H^{i}
		\end{align*}
		is pseudo-effective for $i \in \{0,\dots,N-1\}$. Since $\psi^{*}H^{N-1-i} \cdot \pi^{*}H$ is a nef class (see \cite[Sections 3.1 and 3.3]{Da20}), we have 
		\begin{align*}
			&(\phi^{*}H^{i} \cdot \psi^{*}H^{N-1-i} \cdot \pi^{*}H) \\
			\leq \ &(N-i+1)^{i} \frac{(\phi^{*}H^{i}\cdot \pi^{*}H^{N-i})}{(\pi^{*}H^{N})} (\pi^{*}H^{i} \cdot \psi^{*}H^{N-1-i} \cdot \pi^{*}H)\\
			= \ &(N-i+1)^{i} \frac{\deg_{i,H}(f^{n})}{(H^{N})} \deg_{i+1,H}(f^{n}) \\
			\leq \ &\frac{(N+1)^{N-1} }{(H^{N})} \deg_{i,H}(f^{n}) \deg_{i+1,H}(f^{n}).  
		\end{align*}
		Thus we obtain
		\begin{align*}
			&\varepsilon^{n}N \delta_{p}^{n} \sum_{i=0}^{N-1} \binom{N-1}{i} \left( \delta_{p-1}^{i} \delta_{p+1}^{N-1-i}\right)^{n} 
			(\phi^{*}H^{i}\cdot \psi^{*}H^{N-1-i} \cdot \pi^{*}H)\\
			\leq \ &\varepsilon^{n} N^{2}2^{N-1} \frac{(N+1)^{N-1} }{(H^{N})} \delta_{p}^{n}
			\max_{0 \leq i \leq N-1}\left\{ \left( \delta_{p-1}^{i} \delta_{p+1}^{N-1-i}\right)^{n}  \deg_{i,H}(f^{n}) \deg_{i+1,H}(f^{n})  \right\}.
		\end{align*}
		Let us write
		\begin{align*}
			R_{n} = N^{2}2^{N-1} \frac{(N+1)^{N-1} }{(H^{N})} \delta_{p}^{n}
			\max_{0 \leq i \leq N-1}\left\{ \left( \delta_{p-1}^{i} \delta_{p+1}^{N-1-i}\right)^{n}  \deg_{i,H}(f^{n}) \deg_{i+1,H}(f^{n})  \right\}.
		\end{align*}
		Then it follows that  
		\begin{align*}
			\lim_{n\to \infty} L_{n}^{1/n} &= \delta_{p-1}^{p} \delta_{p+1}^{N-p} \delta_{p}^{2}, \\
			\lim_{n\to \infty} R_{n}^{1/n} &= \delta_{p}\max_{0 \leq i \leq N-1}\left\{ \delta_{p-1}^{i} \delta_{p+1}^{N-1-i} \delta_{i} \delta_{i+1} \right\}, 
		\end{align*}
		where for the latter we use Lemma \ref{lem_limax} below. 
		By Lemma \ref{lemma:ineq-of-dynamical-degrees}, we conclude that  
		\begin{align*}
			\lim_{n \to \infty}R_{n}^{1/n} \leq \lim_{n \to \infty}L_{n}^{1/n}.
		\end{align*}
		Thus there is $n_{\varepsilon} \geq 1$ such that for all $n \geq n_{\varepsilon}$ we have
		\begin{align*}
			L_{n} > \varepsilon^{n} R_{n}.
		\end{align*}
		This implies $\vol(D'_{n}) > 0$, or equivalently, the $\RR$-divisor
		\[ D_n = \frac{D'_n}{\delta^n_{p-1} \delta^n_{p+1}}
		\]
		is big for all $n \geq n_{\varepsilon}$.
	\end{proof}

	The following lemma is elementary, so we omit the proof here. 
	
	\begin{lemma}\label{lem_limax} For each $i = 1, \dots, k$, let $\{ a_{i, n} \}_{n\in \ZZ_{>0}}$ be a sequence of positive numbers such that the limit $a_i = \lim\limits_{n\to \infty} a_{i, n}^{1/n}$
		exists. Then the limit 
		\[ \lim_{n\to \infty} \max_{1\leq i\leq k} \{ a_{i, n} \}^{1/n}
		\]
		exists as well and equals $\max_{i} \{ a_i\}$. 
	\end{lemma}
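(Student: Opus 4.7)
The plan is to bound $b_n := \max_{1 \leq i \leq k}\{a_{i,n}\}$ from both sides by quantities whose $n$-th root tends to $M := \max_i\{a_i\}$. Since the maximum is taken over only $k$ terms (with $k$ fixed and independent of $n$), both bounds are essentially sharp after taking $n$-th roots.

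For the lower bound, I would note that $b_n \geq a_{i,n}$ for each $i$ and each $n$, so $b_n^{1/n} \geq a_{i,n}^{1/n}$. Taking $\liminf_{n \to \infty}$ on both sides and using the hypothesis that $a_{i,n}^{1/n} \to a_i$, we get $\liminf_{n \to \infty} b_n^{1/n} \geq a_i$ for every $i$, hence
\begin{align*}
\liminf_{n \to \infty} b_n^{1/n} \geq \max_{1 \leq i \leq k}\{a_i\} = M.
\end{align*}

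For the upper bound, I would proceed via an $\varepsilon$-argument. Fix $\varepsilon > 0$. For each $i$, since $a_{i,n}^{1/n} \to a_i \leq M$, there exists $N_i$ such that $a_{i,n} \leq (M+\varepsilon)^n$ for all $n \geq N_i$. Taking $N = \max_i N_i$ (here we crucially use that there are only finitely many indices), we obtain $b_n = \max_i a_{i,n} \leq (M+\varepsilon)^n$ for all $n \geq N$, so $b_n^{1/n} \leq M+\varepsilon$. Therefore $\limsup_{n \to \infty} b_n^{1/n} \leq M + \varepsilon$, and letting $\varepsilon \to 0$ yields $\limsup_{n \to \infty} b_n^{1/n} \leq M$.

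Combining the two bounds, $\lim_{n \to \infty} b_n^{1/n}$ exists and equals $M$. There is no real obstacle here: the only subtlety is ensuring that the bound can be made uniform in $i$, which is immediate because $k$ is finite and fixed. An alternative single-line argument is to use the sandwich $b_n \leq \sum_{i=1}^k a_{i,n} \leq k \cdot b_n$ together with $k^{1/n} \to 1$, which reduces the statement to the obvious fact that $(\sum_i a_{i,n})^{1/n} \to M$; I would mention this as a cleaner variant if space permits.
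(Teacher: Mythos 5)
The paper actually omits the proof of this lemma, declaring it elementary, so there is nothing to compare against. Your argument is correct: the lower bound via $b_n \geq a_{i,n}$ and the upper bound via a uniform $\varepsilon$-tail (using finiteness of $k$ to take $N=\max_i N_i$) give $\liminf b_n^{1/n}\geq M\geq \limsup b_n^{1/n}$, and the alternative sandwich $b_n\leq\sum_i a_{i,n}\leq k\,b_n$ with $k^{1/n}\to 1$ is a clean variant. Either version is an acceptable filling-in of the paper's omitted proof.
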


	\section{Proofs of main theorems}\label{sec_prf}
	
	\subsection{Lower bounds of arithmetic degrees}
	
	\begin{theorem}[Theorem \ref{thm_lb_pch}] 
		Let $f: X \dashrightarrow X$ be a dominant rational self-map of a smooth projective variety $X$ defined over $\qb$. Assume that $f$ is $p$-cohomologically hyperbolic. 
		
		\smallskip $(1)$ For every $x \in X_f(\qb)$ with generic orbit $\OO_f(x)$, we have 
		\[ \underline{\alpha}_f(x) \geq \frac{\delta_p(f)}{\delta_{p-1}(f)}. 
		\]
		
		\smallskip $(2)$ There exists a sequence $\{x_i\}_{i\geq 1} \subset X_f(\qb)$ of $\qb$-points, such that $(X, f, x_i)$ has the DML property for each $i$, and the sequence $\{\underline{\alpha}_f(x_i)\}_{i\geq 1}$ of lower arithmetic degrees satisfies 
		\[ \lim_{i\to \infty} \underline{\alpha}_f(x_i) \geq \frac{\delta_p(f)}{\delta_{p-1}(f)}.
		\]
	\end{theorem}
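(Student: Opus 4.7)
The plan is to combine the height inequality of Proposition \ref{prop_hinq} with the dynamical lower bound of Lemma \ref{lem_tech}, applied to the iterate $g = f^n$ for large $n$, and then translate back to $f$ via Lemma \ref{lem_ks13}(2). Concretely, choosing $d_1 = \delta_{p+1}(f)^n$, $d_2 = \delta_{p-1}(f)^n$ and $\zeta = \varepsilon^n \delta_p(f)^n /(\delta_{p-1}(f)^n \delta_{p+1}(f)^n)$, the inequality supplied by Proposition \ref{prop_hinq} is exactly of the form hypothesized by Lemma \ref{lem_tech} on the open set $U_n$. The conclusion of Lemma \ref{lem_tech} then reads $\underline{\alpha}_{f^n}(y) \geq \zeta d_1/2 = \varepsilon^n \delta_p(f)^n /(2\delta_{p-1}(f)^n)$ for any $y$ with $\OO_{f^n}(y) \subset U_n(\qb)$, which by Lemma \ref{lem_ks13}(2) rewrites as
\begin{align*}
\underline{\alpha}_f(y) \geq \frac{\varepsilon\, \delta_p(f)}{2^{1/n}\,\delta_{p-1}(f)}.
\end{align*}
Sending $n \to \infty$ and then $\varepsilon \to 1$ yields the bound $\delta_p(f)/\delta_{p-1}(f)$.

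Before applying Lemma \ref{lem_tech}, I must verify its standing condition $\zeta > 1/d_1 + 1/d_2$, which unwinds to the elementary inequality $\varepsilon^n \delta_p(f)^n > \delta_{p-1}(f)^n + \delta_{p+1}(f)^n$. Since $f$ is $p$-cohomologically hyperbolic, $\delta_p(f)$ strictly dominates both $\delta_{p-1}(f)$ and $\delta_{p+1}(f)$, so I can fix $\varepsilon_0 \in (0,1)$ with $\varepsilon_0 \delta_p(f) > \max\{\delta_{p-1}(f), \delta_{p+1}(f)\}$ and then the inequality holds for any $\varepsilon \in [\varepsilon_0, 1)$ and all sufficiently large $n$. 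This places the framework above on a firm footing; the rest of the argument is to exhibit, in each case, points whose $f^n$-orbit is contained in $U_n$.

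For part (1), given $x \in X_f(\qb)$ with generic $f$-orbit, the set $\OO_f(x) \cap (X \setminus U_n)$ is finite for each $n$, so there exists $k = k(n)$ such that $f^m(x) \in U_n$ for all $m \geq k$; in particular $\OO_{f^n}(f^k(x)) \subset U_n(\qb)$. Applying the above argument with $y = f^k(x)$ and invoking Lemma \ref{lem_ks13}(1) to replace $\underline{\alpha}_f(f^k(x))$ by $\underline{\alpha}_f(x)$ yields $\underline{\alpha}_f(x) \geq \delta_p(f)/\delta_{p-1}(f)$ after the limits are taken. For part (2), the existence of points with the DML property whose $f$-orbit stays in $U_n$ is precisely Proposition \ref{prop_xie} (note that $f$ has infinite order because $\delta_1(f) > 1$ by cohomological hyperbolicity). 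Applying that proposition with $U = U_n$ produces, for each $n \geq n_\varepsilon$, a point $x_n \in X_f(\qb)$ with $\OO_f(x_n) \subset U_n(\qb)$ and with $(X,f,x_n)$ satisfying DML, hence $\underline{\alpha}_f(x_n) \geq \varepsilon\, \delta_p(f)/(2^{1/n}\,\delta_{p-1}(f))$. Taking a diagonal sequence with $\varepsilon_i \nearrow 1$ and $n_i \to \infty$ produces the required sequence $\{x_i\}$.

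The substantive geometric work is of course hidden inside Proposition \ref{prop_hinq} (built on the bigness estimate Proposition \ref{prop_bigness}) and Lemma \ref{lem_tech}; once these are in hand, the main remaining obstacle is the orbit-containment issue, namely guaranteeing that the candidate points have their full forward orbit inside the Zariski open set $U_n$ produced by Proposition \ref{prop_hinq}. For part (1) this is handled cheaply by genericity of the orbit together with shift-invariance of $\underline{\alpha}_f$, while for part (2) it is exactly what Proposition \ref{prop_xie} (via Xie's adelic-topology arguments) is designed to produce, and moreover it simultaneously packages the DML property that is required in the conclusion.
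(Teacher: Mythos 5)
Your proposal is correct and follows essentially the same approach as the paper's own proof: invoke Proposition \ref{prop_hinq} to feed Lemma \ref{lem_tech} applied to $g = f^n$, verify the standing condition $\zeta > 1/d_1 + 1/d_2$ using $p$-cohomological hyperbolicity, translate back to $f$ via Lemma \ref{lem_ks13}, handle part (1) by genericity of the orbit together with shift-invariance of $\underline{\alpha}_f$, and handle part (2) via Proposition \ref{prop_xie}. The only cosmetic divergences are that the paper uses the slightly cruder sufficient condition $\varepsilon^n \delta_p^n > 2\max\{\delta_{p-1}^n, \delta_{p+1}^n\}$ and, for part (2), explicitly indexes the candidate points by both $k$ (controlling $\varepsilon$) and $n$ before extracting a convergent subsequence from the bounded family $\{\underline{\alpha}_f(x_{k,n})\}$; your diagonal-sequence phrasing should also make this last extraction explicit so that the limit in the conclusion actually exists.
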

	
	\begin{proof} Fix an ample Cartier divisor $H$ on $X$. We write $\delta_i = \delta_i(f)$ and set 
		\[ \gamma = \max \left\{ \frac{\delta_{p-1}}{\delta_p}, \frac{\delta_{p+1}}{\delta_p}\right\} < 1. 
		\]
		Then for every $\varepsilon \in (\gamma, 1)$, there exists a positive integer $n'_{\varepsilon}$ such that for every $n\geq n'_{\varepsilon}$, 
		\[ \varepsilon^n \delta_p^n > 2\delta_{p-1}^n, \ \text{ and } \ \varepsilon^n \delta_p^n > 2\delta_{p+1}^n, 
		\]
		and thus, 
		\[ \frac{\varepsilon^n \delta_p^n}{\delta_{p-1}^n \delta_{p+1}^n} > \frac{1}{\delta_{p-1}^n} + \frac{1}{\delta_{p+1}^n}. 
		\]
		
		By Proposition \ref{prop_hinq}, for every $\varepsilon \in (\gamma, 1)$, there exist $n_{\varepsilon}\in \ZZ_{>0}$, such that for each $n \geq n_{\varepsilon}$, 
		\[ \frac{h_H(f^{2n}(z))}{ \delta_{p+1}^{n}} + \frac{h_H(z)}{ \delta_{p-1}^{n}} \geq \frac{\varepsilon^{n} \delta_{p}^{n} h_H(f^{n}(z))}{ \delta_{p-1}^{n} \delta_{p+1}^{n}} - C_n
		\]
		holds for all $z\in X_{f^n}(\qb) \cap U_n(\qb)$, where $U_n\subset X$ is a non-empty Zariski open subset of $X$ and $C_n > 0$ is a constant, both are independent of $z$. 
		
		\medskip $(1)$ We fix $\varepsilon \in (\gamma, 1)$ and fix an arbitrary $n\geq \max\{ n_{\varepsilon}, n'_{\varepsilon}\}$. 
		
		Since $\OO_f(x)$ is generic by assumption, there is $j_0 \geq 0$ such that
		$f^j(x) \in U_n(\qb)$ for all $j \geq j_0$. Then by Lemma \ref{lem_tech}, 
		\[ \underline{\alpha}_{f^n}(f^{j_0}(x)) \geq \frac{\varepsilon^n \delta_p^n}{2\delta_{p-1}^n}. 
		\]
		Since $\underline{\alpha}_{f}(x)^n = \underline{\alpha}_{f^n}(x) = \underline{\alpha}_{f^n}(f^{j_0}(x))$ by Lemma \ref{lem_ks13}, 
		\[ \underline{\alpha}_{f}(x) \geq \frac{\varepsilon \delta_p}{2^{1/n}\delta_{p-1}},  
		\]
		and thus, letting $n\to +\infty$, 
		\[ \underline{\alpha}_{f}(x) \geq \frac{\varepsilon \delta_p}{\delta_{p-1}}.  
		\]
		Therefore, we have 
		\[ \underline{\alpha}_{f}(x) \geq \frac{\delta_p}{\delta_{p-1}}  
		\]
		as $\varepsilon \in (\gamma, 1)$ is arbitrary. 
		
		\medskip $(2)$ We set $\varepsilon = 1 - 1/k$ where $k$ is an integer.
		If $k$ is large, we have $ \varepsilon \in (\gamma, 1)$.
		Let $n\geq \max\{ n_{\varepsilon}, n'_{\varepsilon}\}$. 
		By Proposition \ref{prop_xie}, there exists a $\qb$-point $x_{k, n} \in U_n(\qb) \cap X_{f}(\qb)$ such that $\OO_f(x_{k, n}) \subset U_n(\qb)$, $\OO_f(x_{k, n})$ is infinite, and $(X, f, x_{k, n})$ has the DML property. Similarly by Lemma \ref{lem_tech}, we obtain 
		\[ \underline{\alpha}_{f}(x_{k, n}) \geq \frac{(1- 1/k) \delta_p}{2^{1/n}\delta_{p-1}}
		\]
		for large $k$.
		Hence
		\[ \sup_{k, n} \left\{ \underline{\alpha}_{f}(x_{k, n}) \right\} \geq \frac{\delta_p}{\delta_{p-1}}, 
		\]
		and we can find a sequence $\{x_i\}_{i\geq 1} \subset X_f(\qb)$ of $\qb$-points among $\{ x_{k, n}\}$ such that 
		\[ \lim_{i\to \infty} \underline{\alpha}_f(x_i) \geq \frac{\delta_p}{\delta_{p-1}}, 
		\]
		and $(X, f, x_i)$ has the DML property for each $i$.	
	\end{proof}
	
	\begin{remark}
		In the proof of (2), there is actually a non-empty adelic open subset $A_{k,n} \subset U_n(\qb)$ such that for all $x \in A_{k,n}$ we have
		\begin{align*}
			\underline{ \alpha}_f(x) \geq \frac{(1- 1/k) \delta_p}{2^{1/n}\delta_{p-1}}.
		\end{align*}
		This follows from Remark \ref{rmk:adelic-open}.
		Therefore, we have shown that under the assumption of
		Theorem \ref{thm_lb_pch}, for arbitrary $\epsilon>0$, there is a non-empty adelic open subset $A \subset X(\qb)$ such that
		\begin{align*}
			\underline{ \alpha}_f(x) \geq \frac{\delta_p}{\delta_{p-1}} -\epsilon
		\end{align*}
		for all $x \in A$.
	\end{remark}

	\begin{remark} 
		Although the cohomological hyperbolicity is only used to ensure the assumption (\ref{eq_tech_cond}) in Lemma \ref{lem_tech}, this technical condition is indispensable in our argument. Even for a dominant rational self-map $f$ of $\PP^2$ with $\delta_1(f) = \delta_2(f) > 1$, currently we are unable to find a $\qb$-point whose lower arithmetic degree is strictly larger than $1$. One perhaps expects the existence of invariant fibrations (for some iterate of $f$) in this case. Unfortunately, this is not true in general; see \cite{KPR16}. 
	\end{remark}

	Corollary \ref{thm_lb_onech} about $1$-cohomologically hyperbolic self-maps follows immediately. 
	
	\begin{proof}[Proof of Corollary \ref{thm_lb_onech}] Note that $\delta_0(f) = 1$. Claim $(2)$ then follows from Theorem \ref{thm_lb_pch}(2) immediately. Moreover, Theorem \ref{thm_lb_pch}(1) says that $\underline{\alpha}_f(x) \geq \delta_1(f)$. As we have $\underline{\alpha}_f(x) \leq \overline{\alpha}_f(x) \leq \delta_1(f)$ in general, Claim $(1)$ follows. 
	\end{proof}

	Combining with the dynamical Mordell--Lang conjecture for \'etale morphisms \cite{BGT10}, we obtain the following result.

	\begin{corollary} Let $f: X \dashrightarrow X$ be a $1$-cohomologically hyperbolic dominant rational self-map of a smooth projective variety $X$ defined over $\qb$. Assume that $f$ is an extension of an \'etale dominant morphism $f^{\circ}: X^{\circ} \to X^{\circ}$ where $X^{\circ}\subset X$ is a non-empty Zariski open subset. Then for every $x\in X_f(\qb)$ with Zariski dense orbit $\OO_f(x)$, $\alpha_f(x)$ exists and $\alpha_f(x) = \delta_1(f)$. 
	\end{corollary}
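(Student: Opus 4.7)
The plan is to deduce the statement from Corollary \ref{thm_lb_onech}(1) by verifying that the orbit of $x$ is generic in $X$; genericness will follow from the dynamical Mordell--Lang theorem for étale morphisms of \cite{BGT10}. Since $f^\circ \colon X^\circ \to X^\circ$ is a morphism obtained by restricting $f$, one has $f(X^\circ(\qb)) \subset X^\circ(\qb)$, so $X^\circ$ is forward $f$-invariant. As $\OO_f(x)$ is Zariski dense in $X$ while $X \setminus X^\circ$ is a proper Zariski closed subset, some iterate $f^{n_0}(x)$ must lie in $X^\circ(\qb)$. Replacing $x$ by $f^{n_0}(x)$, which changes neither $\underline{\alpha}_f(x)$ nor $\overline{\alpha}_f(x)$ by Lemma \ref{lem_ks13}, I may assume from the outset that $x \in X^\circ(\qb)$, and hence that $\OO_f(x) \subset X^\circ$.

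Next I would apply \cite[Theorem 1.3]{BGT10} to the étale dominant morphism $f^\circ$ and the point $x \in X^\circ(\qb)$: for every closed subvariety $W \subset X^\circ$, the return set $\{k \geq 0 : (f^\circ)^k(x) \in W\}$ is a finite union of one-sided arithmetic progressions. To upgrade this into the DML property for the triple $(X, f, x)$, note that for any closed subvariety $Z \subset X$, setting $W = Z \cap X^\circ$ and using $\OO_f(x) \subset X^\circ$ gives the equality $\{k \geq 0 : f^k(x) \in Z\} = \{k \geq 0 : (f^\circ)^k(x) \in W\}$, so the former is a finite union of one-sided arithmetic progressions as well. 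Thus $(X, f, x)$ has the DML property.

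By the observation recorded right after Definition \ref{def_generic}, a Zariski dense orbit that satisfies DML is automatically generic, so $\OO_f(x)$ is generic in $X$. Corollary \ref{thm_lb_onech}(1) then applies directly and yields that $\alpha_f(x)$ exists with $\alpha_f(x) = \delta_1(f)$. The only genuine step beyond citing prior results is the passage from DML for $f^\circ$ on $X^\circ$ to DML for $f$ on $X$, which reduces to the forward invariance of $X^\circ$ established at the start; I do not foresee this to be a significant obstacle.
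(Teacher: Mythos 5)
Your proof is correct and takes essentially the same route as the paper's. The only cosmetic difference is that the paper cites \cite[Corollary 1.4]{BGT10} directly to obtain finiteness of the return sets (that corollary already combines the arithmetic-progression structure of \cite[Theorem 1.3]{BGT10} with the Zariski-density hypothesis), whereas you cite Theorem 1.3 and then invoke the remark after Definition \ref{def_generic} to pass from the DML property plus Zariski density to genericness; both amount to the same argument.
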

	
	\begin{proof} By Corollary \ref{thm_lb_onech}(1), it is enough to show that $\left\{ n \in \ZZ_{\geq 0} : f^n(x) \in Z \right\}$ is finite for every proper closed subvariety $Z \subset X$. This is because $f^{n_0}(x) \in X^{\circ}$ for some positive integer $n_0$ and $\left\{ n \in \ZZ_{\geq 0} : f^n(f^{n_0}(x)) \in Z\cap X^{\circ} \right\}$ is finite by \cite[Corollary 1.4]{BGT10}. 
	\end{proof}

	\begin{remark} 
    
    Let $X$ be a quasi-projective variety over an algebraically closed field of characteristic zero with non-negative logarithmic Kodaira dimension (see \cite[Page 326]{Ii82} for the definition). By \cite[Theorem 11.7]{Ii82}, any proper dominant self-morphism of $X$ is \'etale. 
    For example, any dominant monomial map $f$ on the algebraic torus $\mathbb{G}_{m}^n$
    is \'etale. By \cite[Cororally B]{FaWu12} or \cite[Theorem 1]{Li12}, 
    $f$ is $1$-cohomologically hyperbolic if and only if
    the characteristic polynomial of its defining matrix 
    is the minimal polynomial of a Pisot number 
    (i.e.\ a real algebraic integer larger than $1$ whose Galois conjugates
    have modulus less than $1$). See also Example \ref{ex:auto_A3_coh_hyp} below for an explicit example. 
	\end{remark}

	\subsection{Existence of Zariski dense orbits}
	
	Now we show the existence of Zariski dense orbits. The following argument is inspired by \cite[Section 8]{JSXZ21}. 
	
	Let $f \colon X \dashrightarrow X$ be a $p$-cohomologically hyperbolic 
	dominant rational map on a smooth projective variety $X$ defined over $\qb$.
	Let $N = \dim X$.
	By the log concavity of dynamical degrees (\ref{eq_logc}), we have 
	\begin{equation}\label{eq_zdodd}
		\delta_{1}(f),\dots, \delta_{p}(f) \geq \frac{\delta_{p}(f)}{\delta_{p-1}(f)} \ \ \text{and} \ \ \delta_{p+1}(f)>\delta_{p+2}(f)>\cdots > \delta_{N}(f).
	\end{equation}

	\begin{proposition}\label{prop:exist-zdo-general-case}
		Let $X$ be a smooth projective variety defined over $\qb$ of dimension $N$.
		Let $f \colon X \dashrightarrow X$ be a $p$-cohomologically hyperbolic dominant rational map satisfying 
		\[ \delta_{N}(f) < \frac{\delta_{p}(f)}{\delta_{p-1}(f)}. 
		\]
		Set
		\begin{align*}
			l_0 := \min \left\{ l \geq 0 \,\middle|\, \delta_{2+l}(f) < \frac{\delta_{p}(f)}{\delta_{p-1}(f)} \right\}. 
		\end{align*}
		Then there is a non-empty adelic open subset $A \subset X_f(\qb)$ such that for all $x \in A$, 
		\begin{itemize} 
			\item $(X, f, x)$ has the DML property, and 
			\item $\codim_X \overline{\OO_f(x)} \leq l_0$. 
		\end{itemize}
		Here $\overline{\OO_f(x)}$ is the closure of $\OO_f(x)$ with respect to Zariski topology.
		
		\smallskip In particular, if $p=1$, then $l_0=0$ and $\OO_f(x)$ is Zariski dense for all $x \in A$.
		Moreover, in this case, we have $\alpha_f(x) = \delta_{1}(f)$ for all $x \in A$.
	\end{proposition}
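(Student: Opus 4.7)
The plan is to combine the adelic-open lower bound on arithmetic degrees provided by Theorem \ref{thm_lb_pch}(2) with the existence of DML points from Proposition \ref{prop_xie}, and then invoke the restriction inequality for dynamical degrees (Lemma \ref{lem:comp-dyn-deg}) to forbid the Zariski closure of the orbit from having codimension larger than $l_0$. Write $\delta_i = \delta_i(f)$. The first observation I would make is that log concavity forces $\delta_1, \dots, \delta_p \geq \delta_p/\delta_{p-1}$, so $l_0 \geq p-1$ and in particular $2 + l_0 \geq p + 1$; combined with the strict inequality $\delta_{2+l_0} < \delta_p/\delta_{p-1}$ built into the definition of $l_0$, this lets me fix $\epsilon > 0$ small enough that $\delta_p/\delta_{p-1} - \epsilon > \delta_{2+l_0}$.

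Next I would let $V \subset X \setminus I(f)$ be the (non-empty) Zariski open on which $f$ is quasi-finite, and intersect the adelic opens supplied by Theorem \ref{thm_lb_pch}(2) (via Remark \ref{rmk:existence-points-adelic-open-ver}) and by Proposition \ref{prop_xie} applied with $U = V$ (via Remark \ref{rmk:adelic-open}). This should produce a non-empty adelic open $A \subset X_f(\qb)$ and an integer $m \geq 1$ such that every $x \in A$ simultaneously satisfies: $\underline{\alpha}_f(x) \geq \delta_p/\delta_{p-1} - \epsilon$, $(X, f, x)$ has the DML property, $\OO_f(x) \subset V$, and $Z := \overline{\OO_{f^m}(x)}$ is irreducible.

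For $x \in A$, set $g = f^m$ and $l = \codim_X Z$. Since $\OO_g(x)$ is Zariski dense in $Z$, the map $g$ restricts to a dominant rational self-map $g|_Z \colon Z \dashrightarrow Z$. The generic point $\eta$ of $Z$ lies in $V$ because the orbit does, so $f$ is quasi-finite at $\eta$, and since $f(\eta) = \eta$, the iterate $g$ is quasi-finite at $\eta$ as well. With these conditions in hand, Lemma \ref{lem:comp-dyn-deg} gives $\delta_1(g|_Z) \leq \delta_{1+l}(g) = \delta_{1+l}^m$, and feeding this into Lemma \ref{lem_mmsz2.5} and Proposition \ref{prop_jsxz3.11} would yield
\begin{align*}
(\delta_p/\delta_{p-1} - \epsilon)^m \leq \underline{\alpha}_f(x)^m = \underline{\alpha}_g(x) = \underline{\alpha}_{g|_Z}(x) \leq \delta_1(g|_Z) \leq \delta_{1+l}^m.
\end{align*}
Hence $\delta_{1+l} \geq \delta_p/\delta_{p-1} - \epsilon > \delta_{2+l_0}$; since $2+l_0 > p$ we are in the strictly decreasing regime of $\delta_i$, which forces $1+l < 2+l_0$, i.e., $l \leq l_0$. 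Because the components of $\overline{\OO_f(x)}$ are permuted by $f$ and all have the same dimension as $Z$, the same codimension bound transfers to $\overline{\OO_f(x)}$.

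For the final clause I would note that $p = 1$ gives $\delta_p/\delta_{p-1} = \delta_1$ and $\delta_2 < \delta_1$, so $l_0 = 0$ and $\OO_f(x)$ is Zariski dense; combined with the DML property, the orbit is generic, and Corollary \ref{thm_lb_onech}(1) upgrades the lower bound to the equality $\alpha_f(x) = \delta_1(f)$. I expect the main obstacle to be the quasi-finiteness hypothesis of Lemma \ref{lem:comp-dyn-deg}, without which the key inequality $\delta_1(g|_Z) \leq \delta_{1+l}(g)$ breaks down; my plan handles this by steering orbits into the quasi-finite locus $V$ at the adelic-open stage and using that the generic point of $Z$ is fixed by $f$, so that quasi-finiteness propagates to all iterates.
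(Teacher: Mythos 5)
Your overall strategy matches the paper's: intersect the adelic opens from Theorem \ref{thm_lb_pch}(2) (via Remark \ref{rmk:existence-points-adelic-open-ver}) and Proposition \ref{prop_xie} (via Remark \ref{rmk:adelic-open}), applied with $U = V$ the quasi-finite locus, to get points with DML property, orbit in $V$, and lower arithmetic degree close to $\delta_p(f)/\delta_{p-1}(f)$; then use Lemma \ref{lem:comp-dyn-deg} together with Lemma \ref{lem_mmsz2.5} and Proposition \ref{prop_jsxz3.11} to bound the codimension of the orbit closure. Your bookkeeping is a touch cleaner than the paper's: you take $Z = \overline{\OO_{f^m}(x)}$ directly, whose irreducibility comes out of Xie's proof, rather than extracting an irreducible component of $\overline{\OO_f(x)}$ with an $f^k$-fixed generic point as the paper does.

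There is, however, one substantive error. You assert ``$f(\eta) = \eta$'' for the generic point $\eta$ of $Z = \overline{\OO_{f^m}(x)}$ and use it to deduce that $g = f^m$ is quasi-finite at $\eta$. But $f(\eta) = \eta$ is false in general when $m > 1$: $f$ sends $\eta$ to the generic point of $\overline{\OO_{f^m}(f(x))}$, which is typically a different irreducible component of $\overline{\OO_f(x)}$. What is true is $g(\eta) = \eta$, but that alone, together with quasi-finiteness of $f$ at $\eta$, does not give quasi-finiteness of $g = f^m$ at $\eta$; you need $f$ to be quasi-finite along the finite $f$-orbit $\eta, f(\eta), \dots, f^{m-1}(\eta)$. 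The correct argument is: for each $0 \leq i < m$, $f^i(\eta)$ is the generic point of $\overline{\OO_{f^m}(f^i(x))}$, a closed set which meets the open set $V$ at $f^i(x)$; hence $f^i(\eta) \in V$, $f$ is quasi-finite there, and so $g = f^m$ is quasi-finite at $\eta$. With this fix (and replacing ``$f(\eta)=\eta$'' by ``$g(\eta)=\eta$'' when invoking Lemma \ref{lem:comp-dyn-deg} for $g$ and $Z$), the rest of your argument --- the chain of inequalities, the deduction $l \leq l_0$, and the $p=1$ specialization via Corollary \ref{thm_lb_onech}(1) --- goes through as written.
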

	\begin{proof} 
		
		Let us fix $\epsilon > 0$ such that
		\begin{align*}
			\delta_{2+l_0}(f) < \frac{\delta_{p}(f)}{\delta_{p-1}(f)} - \epsilon.
		\end{align*}
		Take a non-empty Zariski open subset $U \subset X$ such that
		$f|_U \colon U \longrightarrow X$ is a morphism and quasi-finite.
		Then by Remark \ref{rmk:existence-points-adelic-open-ver},
		Remark \ref{rmk:adelic-open},  and \cite[Proposition 3.18(v)]{Xi22},
		there is a non-empty adelic open subset $A \subset X(\qb)$ such that
		\begin{itemize}
			\item[(i) ] $A \subset X_f(\qb)$, 
			\item[(ii) ] $\OO_f(x) \subset U(\qb)$ for all $x \in A$, 
			\item[(iii) ] $(X, f, x)$ has the DML property, 
			\item[(iv) ] $\underline{\alpha}_f(x) \geq \delta_{p}(f)/\delta_{p-1}(f) - \epsilon$.
		\end{itemize}
		For $x \in A$, there is an irreducible component $Z$ of $\overline{\OO_f(x)}$
		with the generic point $\eta \in Z$ satisfying the following properties:
		\begin{itemize} 
			\item[$(1)$ ] $\dim Z = \dim \overline{\OO_f(x)}$;
			\item[$(2)$ ] $\eta \notin I(f^n)$ for all $n \geq 0$;
			\item[$(3)$ ] $f^k(\eta) = \eta$ for some $k \geq 1$.
		\end{itemize}
		Since $\OO_f(x) \subset U$, we have $\OO_f(\eta) \subset U$.
		Thus $f^k$ is quasi-finite at $\eta$.
		Consider the following diagram
		\begin{align*}
			\xymatrix@C=50pt{
				Z \ar@{-->}[r]^{f^k|_Z =:g } \ar@{}[d]|{\bigcap} & Z \ar@{}[d]|{\bigcap} \\
				X \ar@{-->}[r]_{f^k} & X.
			}
		\end{align*}
		Pick $s \geq 0$ such that $f^s(x) \in Z$.
		Then we have 
		\[ \left(\frac{ \delta_{p}(f) }{ \delta_{p-1}(f) } - \epsilon \right)^k \leq \underline{\alpha}_f(x)^k = \underline{\alpha}_{f^k}(f^s(x)) =\underline{\alpha}_g(f^s(x)) \leq \delta_{1}(g), 
		\]
		where the two equalities follow from Lemmas \ref{lem_ks13} and \ref{lem_mmsz2.5}, the first inequality follows from the item (iv), and the last inequality follows from Proposition \ref{prop_jsxz3.11}. On the other hand, by Lemma \ref{lem:comp-dyn-deg}, we have  
		\[ \delta_{1}(g) \leq \delta_{1+c}(f^k) = \delta_{1+c}(f)^k, \]
		where 
		\[ c = \codim_X Z. \] 
		It then follows that 
		\begin{align*}
			\delta_{2+l_0}(f) < \frac{\delta_{p}(f)}{\delta_{p-1}(f)} - \epsilon
			\leq \delta_{1+c}(f). 
		\end{align*}
		By (\ref{eq_zdodd}) and the definition of $l_0$, we obtain
		\begin{align*}
			1 + \codim_X Z = 1 + c < 2 + l_0,
		\end{align*}
		that is, $\codim_X \overline{\OO_f(x)} = \codim_X Z \leq l_0$.
		
		Finally, we assume $p=1$.
		Then $\delta_{2}(f) < \delta_{1}(f)$ and we get $l_0=0$.
		Therefore $\OO_f(x)$ is Zariski dense for all $x \in A$.
		Since $(X,f,x)$ has the DML property for all $x \in A$,
		$\OO_f(x)$ is generic. Then by Corollary \ref{thm_lb_onech}(1), we have $\alpha_f(x) = \delta_{1}(f)$. 
	\end{proof}

	Theorem \ref{thm_zdo} follows from Proposition \ref{prop:exist-zdo-general-case} immediately. \qed

	\begin{proof}[Proof of Theorem \ref{thm_zdo_three}] By assumption, $\delta_1(f) \neq \delta_2(f)$. If $\delta_1(f) > \delta_2(f)$, then the claim follows from Theorem \ref{thm_zdo}. 
		
		Now we assume $\delta_1(f) < \delta_2(f)$. Since $f$ does not admit invariant rational functions,
		by \cite[Corollary 3.3]{Ca10} (see also \cite[Corollary 1.3]{BMT22}), there are only finitely many totally $f$-invariant hypersurfaces. Let $Z$ be the union of those hypersurfaces.
		By Propositions \ref{prop:exist-zdo-general-case} and \ref{prop_xie}, there exists $x\in X_f(\qb) \setminus Z(\qb)$ such that $\OO_f(x) \subset X_f(\qb) \setminus Z(\qb)$ and $\codim_X \overline{\OO_f(x)} \leq 1$. It then follows that the orbit $\OO_f(x)$ is Zariski dense in $X$.
	\end{proof}

	\begin{proof}[Proof of Theorem \ref{thm_trans_two}] 
		
		By \cite[Main Theorem and 3rd paragraph in Page 196]{BDJ20} (see also Example \ref{ex_bdj} below), there exists a dominant rational self-map $f: \PP^2 \dashrightarrow \PP^2$ defined over $\qb$ whose dynamical degrees are 
		\[ \delta_2(f) = 5 < \delta_1(f) = 6.8575574092..., 
		\]
		and the first dynamical degree $\delta_1(f)$ is a transcendental number. The claim then follows from Theorem \ref{thm_zdo}. 
	\end{proof}

	\subsection{Examples} In this subsection, we list some examples of cohomologically hyperbolic dominant rational self-maps.

	\begin{example}\label{ex_bdj} Here let us briefly recall the examples from \cite{BDJ20}. We work over a field of characteristic zero. Fix homogeneous coordinates $[x_0 : x_1 : x_2]$ on $\PP^2$ and use affine coordinates $(y_1, y_2) = (x_1/x_0, x_2/x_0)$ on the affine chart $\{ x_0 \neq 0 \} \cong \bba^2$.
		
		Consider a dominant rational self-map $f: \PP^2 \dashrightarrow \PP^2$ of the form $f = g \circ h$, where 
		\[ h(y_1, y_2) = (y_1^a y_2^b, y_1^{-b} y_2^a)
		\] 
		is a monomial map, and 
		\[ g: [x_0 : x_1 : x_2] \mapsto [x_0(x_1+x_2-x_0) : x_1(x_2+x_0-x_1) : x_2(x_0+x_1-x_2)]
		\]
		is a fixed birational involution conjugate to the standard Cremona involution by a projective linear automorphism. By \cite[Proposition 2.8]{BDJ20}, $\delta_1(f)$ is the unique positive solution to the equation 
		\begin{equation}\label{equat} 
			\sum_{j\geq 0} \deg(h^j) \lambda^{-j} = 1.
		\end{equation} 
		Here the degree $\deg (h^j)$ of the monomial map $h^j$ can be easily computed from the associated matrix (see e.g.,  \cite[Section 1.3]{BDJ20}). 
		Moreover, $\delta_1(f)$ is transcendental if $\zeta^n \notin \RR$ for every positive integer $n$ where $\zeta = a+b\sqrt{-1}$ (\cite[Main Theorem]{BDJ20}). Note that $\delta_2(f) = \delta_2(h) = |\zeta|^2$. If we take $\zeta = 1+2\sqrt{-1}$, then 
		\[ \delta_2(f) = 5 < \delta_1(f) = 6.8575574092...
		\]
		If we take $\zeta = (1+2\sqrt{-1})^2 = -3+4\sqrt{-1}$, then 
		\[ \delta_2(f) = 25 >  \delta_1(f) = 13.4496076817...
		\]
		Here $\delta_1(f)$ can be computed numerically from Equation (\ref{equat}) (see \cite[Page 196]{BDJ20}). 
	\end{example}

	\begin{example}[{\cite[Example 1.11]{GS02}}] Let $P(x, y)$ be a homogeneous polynomial of degree $d \geq 2$. Consider the birational map $f: \PP^3 \dashrightarrow \PP^3$ induced by the automorphism of $\bba^3$
		\[ f^{\circ}: \bba^3 \to \bba^3, \ \ (x, y, z) \mapsto \left( xP(x, y) + z, x^{d+1} + by, x \right) 
		\]
		with $b \neq 0$. Then 
		\[ (f^{\circ})^{-1}: \bba^3 \to \bba^3, \ \ (x, y, z) \mapsto \left( z, b^{-1}\big( y - z^{d+1} \big), x - zP\Big(z, b^{-1}\big( y - z^{d+1} \big) \Big) \right). 
		\]
		If $\deg_y P = d$, then 
		\[ \deg (f) = d + 1, \ \ \text{and} \ \ \deg (f^{-1}) = d^2 + d + 1. 
		\]
		We know that $f^{-1}$ is weakly regular, so $f^{-1}$ is algebraically $1$-stable (see \cite[Definition 2.1 and below]{GS02}). In particular,  
		\[ \delta_1(f^{-1}) = d^2 + d + 1,  
		\]
		and hence 
		\[ \delta_1(f) \leq \deg (f) = d+1 < \delta_1(f^{-1}) = \delta_2(f). 
		\] 
	\end{example}

	\begin{example}[{cf. \cite[Theorem 4.1(3) and (4)]{CG04}}]\label{ex:auto_A3_coh_hyp} Consider birational self-maps $f$ and $g$ of $\PP^3$ which are extensions of automorphisms of $\bba^3$
		\[ f^{\circ}(x, y, z) = \left( \alpha xy + az, y^2 + x, y \right) \ \text{ with } \alpha a \neq 0, 
		\]
		and
		\[ g^{\circ}(x, y, z) = \left( x^2 - xz + y, \beta z, bx \right) \ \text{ with } \beta b \neq 0.  
		\]
		Then 
		\[ \delta_2(f) = \delta_1(f^{-1}) = 3 > 2 = \delta_1(f), 
		\]
		and 
		\[ \delta_1(g) = 2 > \frac{1 + \sqrt{5}}{2} = \delta_1(g^{-1}) = \delta_2(g). 
		\]
	\end{example}

	\begin{example}[cf. \cite{BHT22}] Let $F : \bba^4_{\qb} \to \bba^4_{\qb}$ be the birational morphism given by 
		\[ F(x_1, x_2, x_3, x_4) = (x_2, -x_4, x_1 - x_1x_2^2, -x_3 + x_1x_2x_4). 
		\]
		Let $\varphi : \bba^4_{\qb} \to \bba^1_{\qb}$ be the morphism given by $\varphi(x_1, x_2, x_3, x_4) = x_1x_4 - x_2x_3$. Then the fibres of $\varphi$ are invariant by $f$, that is, $\varphi = \varphi \circ F$. For $c \in \qb$, let \[ Z_c = \varphi^{-1}(c) = \{ x_1x_4 - x_2x_3 = c \} \subset \bba^4_{\qb}
		\]
		be the fibre of $\varphi$ over $c$. Let $X_c$ be the closure of $Z_c$ in $\PP^4_{\qb}$, and hence, in homogeneous coordinates $[x_1 : x_2 : x_3 : x_4 : z]$ of $\PP^4_{\qb}$, $X_c$ is given by $x_1x_4 - x_2x_3 = cz^2$. 
		
		We now take $c \in \qb \setminus \{0\}$ so that $X_c$ and $Z_c$ are smooth. Since $X_c$ is invariant under $F$, we have a birational morphism $f_c = F|_{Z_c} : Z_c \to Z_c$ of the smooth affine quadric threefold $Z_c$, and its extension $\widehat{f}_c : X_c \dashrightarrow X_c$, which is a birational self-map of the smooth projective quadric threefold $X_c$. 
		
		By \cite[Lemma 2.4]{BHT22} (and its proof), $\delta_1(\widehat{f}_c)$ is the largest root of the polynomial $t^3 - t^2 - t - 1$, which is approximately 1.8393. It is conjectured that $\delta_2(\widehat{f}_c)$ is the largest root of the polynomial $2t^3 - 3(t^2-1)-4t$, which is not an algebraic integer and is approximately 2.1108 (\cite[Conjecture 1.5(2)]{BHT22}). Assuming this conjecture holds and applying Theorem \ref{thm_zdo} to the $1$-cohomologically hyperbolic birational self-map $( \widehat{f}_c )^{-1}$, we obtain an example of arithmetic degrees which is not an algebraic integer but an algebraic number. 
	\end{example}

	We end this section with a remark about canonical height functions of birational self-maps in dimension two. 
	
	\begin{remark} Let $X$ be a smooth projective surface defined over $\qb$ and let $f : X \dashrightarrow X$ be a birational self-map of $X$ with $\delta = \delta_1(f) > 1$. By \cite[Theorem D]{JR18}, up to birational conjugation, the limit
		\[ \widehat{h}^{+}(x) := \lim_{i\to \infty} h_{\theta^{+}}(f^i(x))\cdot \delta^{-i}
		\] 
		exists for all $x\in X_f(\qb)$, where $\theta^{+}$ is the unique  nonzero nef class satisfying $f^{\ast} \theta^{+} \equiv \delta \theta^{+}$ up to scaling, and $h_{\theta^+}$ is a Weil height function associated with any divisor from the class. The function $\widehat{h}^+$ is independent of the choice because the difference of $h_{\theta^+}$ is bounded by the square root of an ample height function and the growth of an ample height along the orbit $\{f^i(x)\}$ is bounded by $(\delta + \epsilon)^i$ for any small $\epsilon > 0$ (cf.\ \cite[Theorem 1.4]{Ma20}).
		Moreover, $\widehat{h}^{+}$ is finite and non-negative on $X_f(\qb)$. It is also shown in \cite{JR18} that, if $\widehat{h}^{+}$ is identically zero on $X_f(\qb)$, then $$\lim_{i\to \infty} h_H(f^i(x))\cdot \delta^{-i} = 0$$ for every $x \in X_f(\qb)$ and every ample Cartier divisor $H$. Our result Theorem \ref{thm_zdo} is weaker than the assertion $\widehat{h}^{+}(x) > 0$ for every $x \in X_f(\qb)$ with Zariski dense orbit. It is still unknown whether $\widehat{h}^{+}$ can be identically zero or not. 
	\end{remark}

	\section{Self-maps with Large topological degree}\label{sec_lgtopdeg}

	In this section, we prove Theorems \ref{thm:ad-largetopdeg} and \ref{thm:fin-preper-largetopdeg}. Let us start with some lemmas. Lemma \ref{lem-resolution} below should be well-known to experts. Here we sketch the proof just for the benefit of the reader.

	\begin{lemma}\label{lem-resolution}
		Let $X$ be a normal projective variety over
		an algebraically closed field $\kk$ of characteristic zero.
		Then there is a projective birational morphism
		\begin{align*}
			\pi \colon X' \longrightarrow X
		\end{align*}
		such that $X'$ is a smooth projective variety over $\kk$
		and there is an effective Cartier divisor $E$ on $X'$ with the following properties:
		\begin{itemize}
			\item $-E$ is $\pi$-ample;
			
			\item $\pi_*E=0$.
		\end{itemize}
	\end{lemma}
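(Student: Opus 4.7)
The plan is to realize $\pi$ as a composition of blow-ups along smooth centers that all map into $\Sing(X)$, and then assemble the exceptional divisors of these blow-ups into a single Cartier divisor $E$ with the required two properties. The approach combines Hironaka's resolution in characteristic zero with the standard composition criterion for relative ampleness.

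First, since $\kk$ has characteristic zero, I would invoke Hironaka's theorem to produce a resolution of singularities
\begin{align*}
\pi \colon X' = X_n \xrightarrow{\pi_n} X_{n-1} \xrightarrow{\pi_{n-1}} \cdots \xrightarrow{\pi_1} X_0 = X,
\end{align*}
where $X'$ is smooth projective and each $\pi_i$ is the blow-up of $X_{i-1}$ along a smooth center $Z_{i-1}$. The centers can be taken so that every $\pi_i$ is an isomorphism above the regular locus of the composition $X_{i-1}\to X$, so that in particular each $Z_{i-1}$ maps into $\Sing(X)$; because $X$ is normal, $\Sing(X)$ has codimension at least two in $X$.

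Write $\mathscr{E}_i$ for the exceptional Cartier divisor of $\pi_i$; then $-\mathscr{E}_i$ is $\pi_i$-very ample. I would build $E$ inductively up the tower. Set $F_0 = 0$, and assume that an effective Cartier divisor $F_{i-1}$ on $X_{i-1}$ has already been constructed so that $-F_{i-1}$ is ample relative to $X_{i-1}\to X$ and $\Supp F_{i-1}$ maps into $\Sing(X)$. Define
\begin{align*}
F_i := \mathscr{E}_i + m_i \, \pi_i^* F_{i-1}
\end{align*}
for a positive integer $m_i$. The standard composition criterion for relative ampleness (essentially EGA~II, \S4.6) asserts that if $L$ is $f$-ample and $M$ is $g$-ample for proper $f\colon W\to Y$, $g\colon Y\to Z$, then $L\otimes f^* M^{\otimes m}$ is $(g\circ f)$-ample for $m$ sufficiently large. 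Applying this with $f = \pi_i$, $g$ the composition $X_{i-1}\to X$, $L = \OOO_{X_i}(-\mathscr{E}_i)$, and $M = \OOO_{X_{i-1}}(-F_{i-1})$, and choosing $m_i$ large enough, one obtains that $-F_i$ is ample relative to $X_i\to X$. Taking $E := F_n$ on $X' = X_n$ yields an effective Cartier divisor with $-E$ being $\pi$-ample.

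It remains to verify $\pi_* E = 0$. By construction $\Supp F_i$ is the union of $\mathscr{E}_i$ and $\pi_i^{-1}(\Supp F_{i-1})$, both of which map under $X_i\to X$ into $\Sing(X)$; inductively $\Supp E = \Supp F_n$ maps into $\Sing(X)$, a subset of codimension at least two in $X$. Hence no prime component of $E$ dominates a divisor on $X$, which is exactly $\pi_* E = 0$ as a cycle. The one delicate point in this plan is the inductive selection of the constants $m_i$: the relative ampleness of $-F_i$ propagates through the tower only once each $m_i$ is chosen large enough, and one must apply the composition criterion carefully at each stage. The codimension bookkeeping for $\pi_* E = 0$, by contrast, is automatic once the blow-up centers are arranged above $\Sing(X)$, which is the standard form of Hironaka-type resolutions in characteristic zero.
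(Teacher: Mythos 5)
Your proposal is correct and follows essentially the same line as the paper: both resolve by a tower of smooth blow-ups with centers lying over $\Sing(X)$, take the (suitably weighted) sum of exceptional pull-backs to produce an anti-relatively-ample effective divisor, and conclude $\pi_*E=0$ because $\Supp E$ maps into $\Sing(X)$, which has codimension at least two in the normal variety $X$. The only cosmetic difference is that you organize the choice of coefficients inductively up the tower while the paper writes the weighted sum at once; the content is identical.
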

	\begin{proof}
		There is a sequence of morphisms
		\begin{align*}
			\xymatrix{
				X=X_0 & X_1 \ar[l]_(.35){\nu_1} & \ar[l]_{\nu_2} \cdots & X_r \ar[l]_{\nu_r}
			}
		\end{align*}
		such that $X_r$ is smooth, and each $\nu_i$ is a blow-up along a smooth irreducible closed subvariety $C_i \subset \Sing X_{i-1}$,
		where $\Sing$ stands for the singular locus
		(see for example \cite[Theorem 1.1]{BM08}).
		
		We set $X' = X_r$ and $\pi = \nu_1 \circ \cdots \circ \nu_r$. 
		Let $E_i$ be the exceptional divisor of the blow-up $\nu_i$.
		Then $-E_i$ is $\nu_i$-ample by definition of blow-up, and $\nu_i(E_i) \subset \Sing X_{i-1}$.
		We also have $\nu_i(\Sing X_i) \subset \Sing X_{i-1}$.
		Then there are positive integers $n_1,\dots, n_{r-1}$ such that
		\begin{align*}
			E := E_r + n_{r-1}\nu_r^*E_{r-1} + \cdots + n_1 (\nu_2 \circ \cdots \circ \nu_r)^*E_1
		\end{align*}
		is anti-$\pi$-ample. Since $\pi(\Supp E) \subset \Sing X$ and $\Sing X$ has codimension at least two,
		we get $\pi_{*} E =0 $. 
	\end{proof}
	
	\begin{lemma}\label{lem:excep-antiample}
		Let $\pi \colon X \longrightarrow Y$ be a generically finite surjective morphism between smooth projective varieties over
		an algebraically closed field $\kk$ of characteristic zero.
		Then there is a birational morphism $\nu \colon X' \longrightarrow X$ from a projective variety $X'$ 
		and an effective Cartier divisor $E$ on $X'$ such that 
		\begin{itemize}
			\item $-E$ is $(\pi \circ \nu)$-ample; 
			
			\item $(\pi \circ \nu)_{*}E=0$. 
		\end{itemize}
	\end{lemma}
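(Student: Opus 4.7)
The plan is to reduce to the birational case via Stein factorization and then combine Lemma \ref{lem-resolution} with Hironaka's theorem on resolution of indeterminacy.

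First, I apply Stein factorization to write $\pi = \pi_2 \circ \pi_1$, where $\pi_1 \colon X \to Z$ has connected fibres (with $Z$ a normal projective variety, since $X$ is smooth) and $\pi_2 \colon Z \to Y$ is finite. Because $\pi$ is generically finite, $\pi_1$ must be birational. Since $\pi_2$ is affine, $(\pi_1 \circ \nu)$-ampleness of a divisor on any modification $X'$ implies $(\pi \circ \nu)$-ampleness, and $(\pi_1 \circ \nu)_* E = 0$ implies $(\pi \circ \nu)_* E = 0$. Hence it suffices to find $\nu \colon X' \to X$ birational and an effective Cartier divisor $E$ on $X'$ such that $-E$ is $(\pi_1 \circ \nu)$-ample and $(\pi_1 \circ \nu)_* E = 0$.

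Next, apply Lemma \ref{lem-resolution} to the normal projective variety $Z$: this gives a resolution $\rho \colon W \to Z$, realized as a sequence of blow-ups along smooth centres, together with an effective Cartier divisor $D$ on $W$ satisfying $-D$ is $\rho$-ample and $\rho_* D = 0$. By Hironaka's theorem on elimination of indeterminacy and principalization, I can resolve the birational map $X \dashrightarrow W$ over $Z$ so as to produce a smooth projective variety $X'$ with birational morphisms $\nu \colon X' \to X$ and $\mu \colon X' \to W$ satisfying $\rho \circ \mu = \pi_1 \circ \nu$, and such that $\mu$ is itself a composition of blow-ups along smooth centres. The construction from the proof of Lemma \ref{lem-resolution} applied to $\mu$ then yields an effective $\mu$-exceptional Cartier divisor $F$ on $X'$ with $-F$ being $\mu$-ample.

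Finally, I set $E := n \mu^* D + F$ for $n$ large enough. The divisor $E$ is effective, and writing $-E = n \mu^*(-D) + (-F)$, the standard relative ampleness principle (namely, $n g^* A + B$ is $(f \circ g)$-ample for $n \gg 0$ whenever $A$ is $f$-ample and $B$ is $g$-ample) shows that $-E$ is $(\rho \circ \mu)$-ample, equivalently $(\pi_1 \circ \nu)$-ample, hence also $(\pi \circ \nu)$-ample by the first step. For the pushforward vanishing, the projection formula gives $\mu_* \mu^* D = D$ (using birationality of $\mu$) and $\mu_* F = 0$ (as $F$ is $\mu$-exceptional), so $(\pi_1 \circ \nu)_* E = \rho_*(n D) = 0$, and consequently $(\pi \circ \nu)_* E = 0$. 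The main technical subtlety lies in the second step: arranging $\mu$ to be a composition of smooth blow-ups so that Lemma \ref{lem-resolution}'s construction of $F$ applies verbatim. This is a standard, though nontrivial, application of Hironaka's principalization of ideals in characteristic zero.
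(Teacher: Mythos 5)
Your proof is correct and follows essentially the same strategy as the paper: Stein factorization to peel off the finite part, Lemma \ref{lem-resolution} to resolve the normal intermediate variety, a further modification to make everything a morphism, and then a linear combination $nA + B$ of relatively anti-ample divisors. The one place where you diverge technically is in how the final modification $X'\to W$ (your $\mu$) is produced: the paper embeds $X$ in $\PP^N$ and takes a \emph{single} blow-up of $Z$ (their resolution of the Stein image) along the base ideal of the induced linear system, which immediately gives a single anti-relatively-ample and exceptional $F$; you instead invoke Hironaka principalization to make $\mu$ a chain of smooth blow-ups and then rerun the composite-exceptional-divisor argument from Lemma \ref{lem-resolution}. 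Both routes work; the paper's is a bit more economical since it sidesteps the smooth-blow-up requirement (and indeed the lemma's statement allows $X'$ to be merely projective, not smooth). Two small points worth tightening in your write-up: (1) to get $\mu$ as a composition of smooth blow-ups \emph{and} $\nu$ a morphism, you should resolve the rational map from $W$ to $X$ by blowing up $W$ (not $X$) — the direction matters for which leg is the chain of blow-ups; and (2) the justification for $\mu_*F=0$ is not literally "Lemma \ref{lem-resolution}'s construction applies verbatim," since there the key was that the centres lie in the singular locus, whereas here the centres lie over codimension-$\geq 2$ loci in $W$ because $W$ is smooth and each centre is of codimension $\geq 2$ — same conclusion, slightly different reason.
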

	\begin{proof}
		Let $X \xrightarrow{ \widetilde{\pi}} \widetilde{Y} \xrightarrow{\mu} Y$ be the Stein factorization of $\pi$, where $\widetilde{\pi}$ is birational and $\mu$ is finite. Note that $\widetilde{Y}$ is normal because it is the normalization of $Y$ in the function field of $X$. 
		By Lemma \ref{lem-resolution}, 
		there is a projective birational morphism $p \colon Z \longrightarrow \widetilde{Y}$
		with the following properties:
		\begin{itemize}
			\item[(i)] $Z$ is a smooth projective variety; 
			
			\item[(ii)] there is an effective Cartier divisor $E_Z$ on $Z$ such that $-E_Z$ is $p$-ample and $p_*E_Z=0$. 
		\end{itemize}
		Take a closed immersion $X \subset \PP^{N}_{\kk}$ into a projective space.
		Consider the rational map
		\[
		\xymatrix{
			Z \ar[r]^p \ar@/^20pt/@{-->}[rrr]^f & \widetilde{Y}  \ar@{-->}[r]^{ \widetilde{\pi}^{-1}}  & X \ar[r]& \PP^{N}_{\kk}.
		}
		\]
		Let $U = Z \setminus I(f)$ and $i \colon U \to Z$ the inclusion.
		Since $I(f)$ has codimension at least two and $Z$ is smooth,
		$i_*(f|_U)^*\OOO(1)$ is invertible.
		This invertible sheaf with the global sections coming from the coordinates of $\PP^N$
		defines a linear system on $Z$ whose base ideal $\mathcal{I}$ has cosupport $I(f)$.
		Now take the blow-up $q \colon X' \longrightarrow Z$ of $Z$ along $\mathcal{I}$.
		Then we get the following diagram:
		\begin{align*}
			\xymatrix{
				X \ar[d]_\pi \ar[rd]^{\widetilde{\pi}} &&&\\
				Y & \widetilde{Y} \ar[l]^{\mu} & Z \ar[l]^p & X' \ar[l]^q \ar[lllu]_{\nu}
			}
		\end{align*}
		where $\nu := \widetilde{\pi}^{-1} \circ p \circ q$.
		The map $\nu$ is a morphism because $q$ is the blow-up along the base ideal.
		Let $F$ be the exceptional divisor of the blow-up $q$.
		Then $-F$ is $q$-ample.
		Since the cosupport of $\mathcal{I}$ has codimension at least two,
		$F$ is $q$-exceptional.
		Since $-F$ is $q$-ample and $-E_Z$ is $p$-ample, there is a positive integer $n$ such that
		$-F - nq^*E_Z$ is $(p\circ q)$-ample.
		We set $E =F + nq^*E_Z $.
		Then $E$ is an effective Cartier divisor on $X'$,  and we have
		\begin{align*}
			(\pi \circ \nu)_*E = \mu_*p_*q_*E = n\mu_*p_*E_Z = 0.
		\end{align*}
		As $-E$ is $(p\circ q)$-ample, it is $(\mu \circ p \circ q)$-ample as well because 
		$\mu$ is finite. Thus $-E$ is $(\pi \circ \nu)$-ample.
	\end{proof}

	\begin{lemma}\label{lem:weaklowerbound}
		Let $X$ be a smooth projective variety defined over $ \overline{\QQ}$ of dimension $N \geq 2$, and let $f \colon X \dashrightarrow X$ be a dominant rational map.
		Let $H$ be an ample Cartier divisor on $X$ and fix an associated height function $h_{H}$.
		Then there are constants $\kappa > 0$, $C \geq 0$, and a closed subset $W \subset X$ of codimension at least two, such that 
		\begin{align*}
			h_{H}^{+}(f(x)) \geq \kappa h_{H}^{+}(x) - C
		\end{align*}
		for all $x \in X ( \overline{\QQ})$ satisfying $x \notin I(f)$ and $f(x) \notin W$. Here we recall that $h^{+}_H = \max\{1, h_H\}$. 
	\end{lemma}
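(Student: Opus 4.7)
The plan is to prove the inequality by resolving $f$, applying Lemma \ref{lem:excep-antiample} to produce an effective divisor with the right relative positivity, and then converting the resulting ampleness into a height inequality via functoriality of Weil heights.

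First I would fix a resolution of indeterminacy of $f$: a smooth projective variety $X'$ together with a birational morphism $\psi \colon X' \to X$ and a morphism $\phi \colon X' \to X$ satisfying $f \circ \psi = \phi$. Because $f$ is dominant and $\dim X = N$, the morphism $\phi$ is generically finite and surjective, so Lemma \ref{lem:excep-antiample} applies to $\phi$ and yields a projective birational morphism $\nu \colon X'' \to X'$ together with an effective Cartier divisor $E$ on $X''$ such that, writing $\tilde\phi := \phi \circ \nu$ and $\tilde\psi := \psi \circ \nu$, the divisor $-E$ is $\tilde\phi$-ample and $\tilde\phi_{*} E = 0$. The $\tilde\phi$-ampleness of $-E$ forces $n\, \tilde\phi^{*}H - E$ to be ample on $X''$ for all sufficiently large integers $n$, and since the ample cone is open in $N^{1}(X'')_{\RR}$, the $\RR$-divisor $n\, \tilde\phi^{*}H - E - \epsilon\, \tilde\psi^{*}H$ remains ample for every small enough $\epsilon > 0$.

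A Weil height associated with this ample $\RR$-Cartier divisor is bounded below on $X''(\qb)$, so functoriality of heights gives
\[ n\, h_{H}(\tilde\phi(y)) - h_{E}(y) - \epsilon\, h_{H}(\tilde\psi(y)) \geq -C_{1} \]
for every $y \in X''(\qb)$, with $C_{1}$ absorbing the standard bounded ambiguities. Since $E$ is effective, $h_{E}(y) \geq -C_{2}$ whenever $y \notin \Supp E$. Define $W := \tilde\phi(\Supp E)$; because $\tilde\phi_{*}E = 0$ and $\tilde\phi$ is proper, $W$ is a closed subset of $X$ of codimension at least two. Given $x \in X(\qb)$ with $x \notin I(f)$ and $f(x) \notin W$, pick any $y \in \tilde\psi^{-1}(x)$; then $\tilde\phi(y) = f(x) \notin W$, hence $y \notin \Supp E$, and substituting yields $h_{H}(f(x)) \geq (\epsilon/n)\, h_{H}(x) - C_{3}$. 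Passing from $h_{H}$ to $h_{H}^{+}$ only enlarges the additive constant, so with $\kappa := \epsilon/n$ and a suitable $C \geq 0$ the claim follows.

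I expect the only delicate step to be the passage from $\tilde\phi$-ampleness of $-E$ to genuine ampleness of $n\, \tilde\phi^{*}H - E$ on the (possibly non-smooth) $X''$; this is the Kleiman ampleness criterion combined with the standard equivalence between $\tilde\phi$-ampleness of $L$ and global ampleness of $L + n\, \tilde\phi^{*}H$ for large $n$ when $H$ is ample on $X$. Everything else is routine bookkeeping with functoriality of heights, using that ample $\RR$-Cartier divisors have heights bounded below and that effective divisors have heights bounded below off their support.
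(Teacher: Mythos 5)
Your proof is correct and follows essentially the same path as the paper: resolve $f$, invoke Lemma \ref{lem:excep-antiample} on the resulting generically finite morphism to obtain an effective exceptional divisor $E$ with $-E$ relatively ample and trivial pushforward, convert the ampleness into a height comparison, and absorb the $h_E$ term off $\Supp E$ while taking $W = \tilde\phi(\Supp E)$. The only cosmetic difference is that you use openness of the ample cone to subtract a small $\epsilon\,\tilde\psi^{*}H$, whereas the paper picks explicit integers $l,m$ with $m(lg^{*}H - E) - q^{*}H$ ample, but these yield the same conclusion with $\kappa$ some explicit positive constant.
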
  
	
	\begin{proof}
		By Lemma \ref{lem:excep-antiample}, we can take and fix the following commutative diagram
		\[
		\xymatrix{
			&X' \ar[dl]_{q} \ar[rd]^{g}& \\
			X \ar@{-->}[rr]_{f}& & X
		}
		\]
		where $X'$ is a projective variety, $q$ is a birational morphism, and $g$ is a surjective morphism such that there is an effective Cartier divisor $E$ on $X'$ with the following properties: 
		\begin{itemize}
			\item[(i)] $-E$ is $g$-ample; 
			
			\item[(ii)] $g_{*}E = 0$.
		\end{itemize}
		Then there is $l \geq 1$ such that $lg^{*}H - E$ is ample.
		Hence there is $m \geq 1$ such that $m(lg^{*}H - E) - q^{*}H$ is ample.
		Therefore 
		\begin{align*}
			m(lh_{H}^{+}\circ g - h_{E}) - h_{H}^{+}\circ q
		\end{align*}
		is bounded below on $X'( \overline{\QQ})$. Since $E$ is effective, $h_{E}$ is bounded below outside the support $\Supp E$ of $E$.  Hence 
		\begin{align*}
			h_{H}^{+}\circ g  \geq  \frac{1}{ml}h_{H}^{+}\circ q -C
		\end{align*}
		for some $C \geq 0$ on $( X' \setminus \Supp E)( \overline{\QQ})$. 
		
		We set $W = g(\Supp E)$, which has codimension at least two by the item (ii). Let $x \in X( \overline{\QQ})$ satisfying $x \notin I(f)$ and $f(x) \notin W$.
		Pick a point $y \in X'( \overline{\QQ})$ such that $q(y) = x$.
		Then $g(y)=f(x)$ and thus $y \notin \Supp E$.
		Therefore we have 
		\[ h_{H}^{+}(f(x)) = h_{H}^{+}(g(y)) \geq \frac{1}{ml} h_{H}^{+}(q(y)) - C = \frac{1}{ml} h_{H}^{+}(x) - C. \]
		The proof is completed by taking $\kappa = (ml)^{-1}$. 
	\end{proof}
	
	\begin{lemma}\label{lem:bigness-largetopdeg}
		Let $X$ be a smooth projective variety of dimension $N$ defined over an algebraically closed field $\kk$ of characteristic zero.
		Let $f \colon X \dashrightarrow X$ be a dominant rational map.
		Suppose $ \delta_{N}(f) > \delta_{N-1}(f)$ 
		(This is equivalent to $N$-cohomological hyperbolicity).
		Fix an ample Cartier divisor $H$ on $X$.
		For each positive integer $n$, we fix a commutative diagram
		\[
		\xymatrix{ &  X'  \ar[dl]_{\psi'_n} \ar[dr]^{\phi'_n} & \\
			X \ar@{-->}[rr]_{f^{n}} & & X  }
		\]
		where $X'$ is a smooth projective variety, $\psi'_n$ is a birational morphism,
		and $\phi'_n := f^n \circ \psi'_n$ is a morphism.
		Then there is $n_{0} \geq 0$ such that for every $n \geq n_{0}$, the Cartier divisor 
		\begin{align*}
			{\phi'_n}^{*} H - {\psi'_n}^{*}H
		\end{align*}
		is big.
	\end{lemma}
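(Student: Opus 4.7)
The plan is to apply the same Siu-type bigness inequality that was used in Proposition \ref{prop_bigness}, but in its simplest two-term form. Set $A := {\phi'_n}^{*}H$ and $B := {\psi'_n}^{*}H$; both are nef Cartier divisors on $X'$, being pullbacks of the ample class $H$ along morphisms. By \cite[Example 2.2.47]{La04},
\begin{equation*}
\vol(A - B) \;\geq\; (A^{N}) \;-\; N\,(A^{N-1} \cdot B).
\end{equation*}
So it suffices to prove that the right-hand side is strictly positive for all sufficiently large $n$.

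Next, I would express each intersection number in terms of the degrees $\deg_{k,H}(f^{n})$, which by definition of dynamical degree grow like $\delta_{k}(f)^{n}$. By the projection formula together with the identity $(f^{n})^{*}H^{k} = \psi'_{n*}{\phi'_n}^{*}H^{k}$, one gets
\begin{equation*}
(A^{N-1}\cdot B) \;=\; \bigl({\phi'_n}^{*}H^{N-1}\cdot {\psi'_n}^{*}H\bigr)_{X'} \;=\; \bigl((f^{n})^{*}H^{N-1}\cdot H\bigr)_{X} \;=\; \deg_{N-1,H}(f^{n}).
\end{equation*}
For the top self-intersection, since $\phi'_n = f^{n}\circ \psi'_n$ and $\psi'_n$ is birational, $\phi'_n$ is a surjective morphism between smooth projective $N$-folds with topological degree equal to that of $f^{n}$. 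Therefore
\begin{equation*}
(A^{N}) \;=\; \deg_{\mathrm{top}}(\phi'_n)\cdot(H^{N}) \;=\; \deg_{\mathrm{top}}(f^{n})\cdot(H^{N}) \;=\; \deg_{N,H}(f^{n}).
\end{equation*}

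Finally, the $N$-cohomological hyperbolicity $\delta_{N}(f) > \delta_{N-1}(f)$ closes the argument. Fix any $\epsilon>0$ with $\delta_{N}(f)-\epsilon > \delta_{N-1}(f)+\epsilon$. Since $\deg_{k,H}(f^{n})^{1/n} \to \delta_{k}(f)$, for all sufficiently large $n$ we have
\begin{equation*}
\deg_{N,H}(f^{n}) \;>\; (\delta_{N}(f)-\epsilon)^{n} \quad\text{and}\quad N\cdot\deg_{N-1,H}(f^{n}) \;<\; N(\delta_{N-1}(f)+\epsilon)^{n},
\end{equation*}
and the first dominates the second because their ratio grows exponentially. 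Hence $(A^{N}) - N(A^{N-1}\cdot B) > 0$ for large $n$, so $A - B = {\phi'_n}^{*}H - {\psi'_n}^{*}H$ has positive volume and is big. I do not anticipate any essential obstacle: the only point of care is the computation of $(A^{N})$, where one must invoke the birationality of $\psi'_n$ to replace the topological degree of $\phi'_n$ by that of $f^{n}$ and thereby match the definition of $\delta_{N}(f)$.
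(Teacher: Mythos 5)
Your proposal is correct and follows essentially the same route as the paper: both reduce to the Siu-type volume lower bound for nef classes and then compare $(\phi_n'^{*}H^{N})=\deg_{N,H}(f^{n})$ against $N(\phi_n'^{*}H^{N-1}\cdot\psi_n'^{*}H)=N\deg_{N-1,H}(f^{n})$, concluding from $\delta_{N}(f)>\delta_{N-1}(f)$ that the first dominates for large $n$. Your detour through the topological degree of $\phi'_n$ is just a slightly different (equally valid) way to identify $(A^{N})$ with $\deg_{N,H}(f^{n})$; the paper reads off the same equality directly from the definition via the projection formula.
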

	\begin{proof}
		Since ${\phi'_n}^{*} H$ and ${\psi'_n}^{*}H$ are nef, it is enough to check
		\begin{align*}
			({\phi'_n}^{*} H^{N}) > N({\phi'_n}^{*} H^{N-1}\cdot {\psi'_n}^{*}H)
		\end{align*}
		(cf. \cite[Theorem 2.2.15]{La04}). We have
		\begin{align*}
			&\lim_{n\to \infty} ({\phi'_n}^{*} H^{N})^{1/n} = \lim_{n\to \infty}\deg_{N,H}(f^{n})^{1/n}  
			= \delta_{N}(f);\\
			&\lim_{n\to \infty} \left(N({\phi'_n}^{*} H^{N-1}\cdot {\psi'_n}^{*}H)\right)^{1/n} = \lim_{n\to \infty} \left(N \deg_{N-1,H}(f^{n})\right)^{1/n} = \delta_{N-1}(f).
		\end{align*}
		Since $\delta_{N}(f) > \delta_{N-1}(f)$ by assumption, we are done.
	\end{proof} 
	
	The following argument uses a weak dynamical Mordell--Lang theorem \cite{BGT15, BHS20} whose statement involves the natural density. Let $S$ be a subset of $\mathbb{N}$, the set of all non-negative integers. Recall the \textit{upper natural} (or \textit{asymptotic}) \textit{density} $\overline{d}(S)$ of $S$ is defined by
	\[ \overline{d}(S) := \limsup_{m\to \infty} \frac{\# (S \cap [1,m])}{m}. 
	\]
	Replacing $\limsup$ by $\liminf$ or $\lim$, we define the \textit{lower natural density} $\underline{d}(S)$ or the \textit{natural density} $d(S)$ (if it exists). Note that if $\overline{d}(S) = 0$, then $d(S) = 0$ and $d(\mathbb{N} \setminus S) = 1$. 
	
	\begin{theorem}[Theorem \ref{thm:ad-largetopdeg}]
		Let $X$ be a smooth projective variety of dimension $N$ defined over $\overline{\QQ}$, and let $f \colon X \dashrightarrow X$ be an $N$-cohomologically hyperbolic dominant rational self-map.
		Let $x \in X_{f}( \overline{\QQ})$ with Zariski dense $f$-orbit $\OO_{f}(x)$  in $X$. Assume that 
		\begin{itemize}
			\item[$(\ast)$] the intersection of $\OO_{f}(x)$ and any Zariski closed subset of $X$ of codimension at least two is finite.
		\end{itemize}
		Then we have
		\begin{align*}
			\underline{\alpha}_{f}(x) \geq \frac{ \delta_{N}(f)}{ \delta_{N-1}(f)}.
		\end{align*}
	\end{theorem}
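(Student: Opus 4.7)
The plan is to combine a weighted refinement of Lemma~\ref{lem:bigness-largetopdeg} with Lemma~\ref{lem:weaklowerbound} applied to the iterate $f^n$, and to control visits to exceptional loci along the orbit of $x$ using both hypothesis $(\ast)$ and the weak dynamical Mordell--Lang theorem of \cite{BGT15, BHS20}. Fix a small $\epsilon > 0$ and set $\lambda := \delta_N(f)/\delta_{N-1}(f) - \epsilon$. I would first establish that, for $n$ sufficiently large, the $\RR$-divisor ${\phi'_n}^{\ast} H - \lambda^n\, {\psi'_n}^{\ast} H$ on $X'$ is big. This is a weighted version of Lemma~\ref{lem:bigness-largetopdeg}, and the proof is identical except that Lazarsfeld's volume estimate now reduces bigness to the numerical inequality $\deg_{N,H}(f^n) > N \lambda^n \deg_{N-1,H}(f^n)$; taking $n$-th roots this becomes $\delta_N(f) > \lambda\, \delta_{N-1}(f)$, which holds by the choice of $\lambda$. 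Then, mimicking the proof of Proposition~\ref{prop_hinq}, bigness yields a nonempty Zariski open $V_n \subset X$ and a constant $C_n > 0$ such that
\begin{align*}
h_H^{+}(f^n(y)) \geq \lambda^n\, h_H^{+}(y) - C_n
\end{align*}
for every $y \in V_n(\qb) \cap X_{f^n}(\qb)$.

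Complementing this, I apply Lemma~\ref{lem:weaklowerbound} to $f^n$ to produce $\kappa_n > 0$, $C'_n \geq 0$, and a closed subset $W_n \subset X$ of codimension at least two, with $h_H^{+}(f^n(y)) \geq \kappa_n\, h_H^{+}(y) - C'_n$ whenever $y \notin I(f^n)$ and $f^n(y) \notin W_n$. Hypothesis $(\ast)$ ensures $\OO_{f^n}(x) \cap W_n$ is finite. To estimate the frequency of visits of the $f^n$-orbit to $X \setminus V_n$ (which may be a divisor), I invoke the weak dynamical Mordell--Lang theorem of \cite{BGT15, BHS20} (cf.\ the proof of \cite[Proposition 3.11]{JSXZ21}): the set $\{k \geq 0 : f^{kn}(x) \in X \setminus V_n\}$ has natural density zero. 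To legitimize this, I observe that $\OO_f(x) = \bigcup_{j=0}^{n-1} \OO_{f^n}(f^j(x))$, so by irreducibility of $X$ together with Zariski density of $\OO_f(x)$, some $\OO_{f^n}(f^j(x))$ must be Zariski dense; replacing $x$ by this $f^j(x)$, which leaves $\underline{\alpha}_f(x)$ unchanged by Lemma~\ref{lem_ks13}(1), reduces the problem to the case where $\OO_{f^n}(x)$ itself is Zariski dense.

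Setting $b_k := h_H^{+}(f^{kn}(x))$, for all sufficiently large $k$ I obtain a one-step lower bound $b_{k+1} \geq \mu_k\, b_k - D_k$, where $\mu_k = \lambda^n$ and $D_k = C_n$ on a density-one set of indices, and $\mu_k = \kappa_n$ and $D_k = C'_n$ on the complementary density-zero set. A telescoping calculation then yields $\liminf_k b_k^{1/k} \geq \lambda^n$: the cumulative multiplicative contribution of the bad steps over $k$ steps is $\kappa_n^{o(k)} = e^{o(k)}$, which vanishes upon taking $k$-th roots, while the additive errors $D_k$ are absorbed into the exponential growth once $b_k$ passes a threshold of size $O(C_n/(\lambda^n - 1))$---an event ensured by the density-one good steps. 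Hence $\underline{\alpha}_{f^n}(x) \geq \lambda^n$, and by Lemma~\ref{lem_ks13}(2), $\underline{\alpha}_f(x) \geq \lambda = \delta_N(f)/\delta_{N-1}(f) - \epsilon$; letting $\epsilon \to 0^{+}$ gives the desired bound. The main technical hurdle is this final telescoping step, in which one must carefully handle the interplay of multiplicative growth on density-one steps with possibly diminishing but density-zero bad steps, together with the additive constants; the bookkeeping is analogous to arguments in \cite{Wa22} and the proof of \cite[Proposition 3.11]{JSXZ21}.
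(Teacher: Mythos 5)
Your proposal is correct, and it takes a genuinely different route from the paper's. The paper's proof derives \emph{two} height inequalities along the orbit: a three-term one coming from the bigness of the divisor $D_n$ in Proposition \ref{prop_bigness} (specialized to $p=N$, where $\delta_{N+1}=1$), plus the weaker two-term bound $h_H(g(z)) \geq h_H(z) - C$ from the \emph{unweighted} bigness of ${\phi'_n}^{*}H - {\psi'_n}^{*}H$ in Lemma \ref{lem:bigness-largetopdeg}; these are combined to bound the ratio $h_H(g^{m+2}(x))/h_H(g^{m+1}(x))$ from below on a density-one set of indices. You instead prove a single \emph{weighted} bigness statement — that ${\phi'_n}^{*}H - \lambda^n {\psi'_n}^{*}H$ is big for any fixed $\lambda < \delta_N(f)/\delta_{N-1}(f)$ and all large $n$, which by Lazarsfeld's volume estimate for differences of nef classes reduces to $({\phi'_n}^{*}H^N) > N\lambda^n({\phi'_n}^{*}H^{N-1}\cdot{\psi'_n}^{*}H)$, i.e.\ $\deg_{N,H}(f^n) > N\lambda^n\deg_{N-1,H}(f^n)$, and this holds for $n\gg 0$ since $\delta_N>\lambda\delta_{N-1}$ — and it yields a direct one-step bound $h_H^+(f^n(y)) \geq \lambda^n h_H^+(y) - C_n$ off a proper closed set, bypassing Proposition \ref{prop_bigness} entirely. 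This is cleaner for the $N$-cohomologically hyperbolic case; the trade-off is that the weighted bigness fundamentally uses $\delta_N > \delta_{N-1}$, so it does not extend to $p<N$ (where $\delta_N/\delta_{N-1}<1$), whereas the paper's $D_n$ was already constructed for arbitrary $p$ and is simply reused here. Everything downstream is parallel in both proofs: Lemma \ref{lem:weaklowerbound} together with hypothesis $(\ast)$ controls visits to the codimension-two locus, the weak dynamical Mordell--Lang theorem of \cite{BGT15,BHS20} gives density zero for visits to the codimension-one bad set, and a telescoping product over good/bad indices (with the Northcott property used to discard the additive constants once heights exceed a fixed threshold) finishes; the innocuous multiplicative losses such as a factor of $1/2$ disappear upon taking $n$-th roots and letting $n\to\infty$, just as in the text. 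One detail you handle more explicitly than the paper: before invoking weak DML for $f^n$ you reduce, by passing to $f^j(x)$ (which preserves $\underline\alpha_f$ and hypothesis $(\ast)$), to the case where the $f^n$-orbit is itself Zariski dense — a step the published proof leaves implicit when it applies \cite{BHS20} to $g=f^{n_1}$ directly.
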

	
	\begin{proof}
		Fix an ample Cartier divisor $H$ on $X$ and fix an associated height function $h_{H}$ so that $h_{H} \geq 1$ (hence $h_{H}^{+}=h_{H}$). 
		
		Write $\delta_i = \delta_i(f)$. Note that $\delta_{N} > 1$. Fix arbitrary $\varepsilon \in (1/\delta_{N}, 1)$.
		For all $n \geq 1$, fix a commutative diagram
		\[
		\xymatrix{ & & &\tx \ar[dl]_{\widetilde{\psi}} \ar[dr]^{\widetilde{\phi}} \ar@/_25pt/[lldd]_{\psi} \ar@/^25pt/[rrdd]^{\phi} \ar[dd]^(.3){\pi}& & \\ 
			&  &X' \ar@{-->}[rr] \ar[dl]_{\psi'} \ar[dr]^{\phi'} & &X' \ar[dl]_{\psi'} \ar[dr]^{\phi'} & \\
			& X \ar@{-->}[rr]_{f^{n}} & & X \ar@{-->}[rr]_{f^{n}} & & X }
		\]
		as in Proposition \ref{prop_bigness}. 
		Then by Proposition \ref{prop_bigness} and Lemma \ref{lem:bigness-largetopdeg}, there is $n_{0} \geq 0$ such that 
		for all $n_{1} \geq n_{0}$ the following two $\RR$-divisors are big:
		\begin{align*}
			& \phi^{*}H + \frac{1}{ \delta_{N-1}^{n_{1}}} \psi^{*}H - \frac{\varepsilon^{n_{1}} \delta_{N}^{n_{1}}}{ \delta_{N-1}^{n_{1}} } \pi^{*}H\\
			&\phi'^{*} H - \psi'^{*}H.
		\end{align*}
		Fix such an $n_{1}$ and set $g = f^{n_{1}}$.
		By the bigness, there is a real number $C \geq 0$ and a proper closed subset $Z \subset X$ such that
		\begin{align}
			\label{ltd:ineq1}&h_{H}(g^{2}(x)) + \frac{1}{ \delta_{N-1}^{n_{1}}} h_{H}(x) - \frac{\varepsilon^{n_{1}} \delta_{N}^{n_{1}}}{ \delta_{N-1}^{n_{1}} } h_{H}(g(x)) \geq -C;\\
			\label{ltd:ineq2}&h_{H}(g(x)) \geq h_{H}(x) -C
		\end{align}
		for all $x \in X_{f}( \overline{\QQ}) \setminus Z( \overline{\QQ})$
		(cf. the proof of Proposition \ref{prop_hinq}).
		By Lemma \ref{lem:weaklowerbound}, there are constants $\kappa >0$, $C' \geq 0$, and a closed subset $W \subset X$ of codimension at least two such that
		\begin{align}\label{ltd:ineq3} 
			h_{H}(g(x)) \geq \kappa h_{H}(x) - C'
		\end{align}
		for all $x \in X_{f}( \overline{\QQ})$ with $g(x) \notin W$.
		
		Now we take a point $x \in X_{f}( \overline{\QQ})$ with Zariski dense $f$-orbit $\OO_f(x)$ and satisfying the assumption $(\ast)$. Then by \cite[Theorem 1.10]{BHS20} (see also \cite[Corollary 1.5]{BGT15}), 
		the set $\left\{ m \geq 0 \ \middle|\ g^{m}(x) \in Z \right\}$ has upper natural density $0$, and hence its complement in $\mathbb{N}$ 
		\begin{align*}
			\Sigma := \left\{ m \geq 0 \ \middle|\ g^{m}(x) \notin Z \right\}
		\end{align*}
		has natural 
		density $1$. By (\ref{ltd:ineq1}) and (\ref{ltd:ineq2}), 
		for all $m \in \Sigma$, we have
		\begin{align*}
			\frac{h_{H}(g^{m+2}(x))}{h_{H}(g^{m+1}(x))} &\geq \frac{\varepsilon^{n_{1}} \delta_{N}^{n_{1}}}{ \delta_{N-1}^{n_{1}} } - \frac{1}{ \delta_{N-1}^{n_{1}}} \frac{h_{H}(g^{m}(x))}{h_{H}(g^{m+1}(x))}  - \frac{C}{h_{H}(g^{m+1}(x))}\\
			& \geq \frac{\varepsilon^{n_{1}} \delta_{N}^{n_{1}}}{ \delta_{N-1}^{n_{1}} } - \frac{1}{ \delta_{N-1}^{n_{1}}} \left(1+\frac{C}{h_{H}(g^{m+1}(x))} \right) - \frac{C}{h_{H}(g^{m+1}(x))}.
		\end{align*}
		Since the $g$-orbit of $x$ is infinite, by the Northcott property, $h_{H}(g^{m}(x))$ goes to $\infty$ as $m \to \infty$. So there is $m_{0}\geq 0$ such that for all $m \in \Sigma \cap [m_{0}, +\infty)$ we have
		\begin{align}\label{eq_est1}
			\frac{h_{H}(g^{m+2}(x))}{h_{H}(g^{m+1}(x))}  \geq  \frac{\varepsilon^{n_{1}} \delta_{N}^{n_{1}}}{ \delta_{N-1}^{n_{1}} } - \frac{2}{\delta_{N-1}^{n_{1}}}. 
		\end{align} 
		By the assumption $(\ast)$, the set
		\begin{align*}
			\left\{ m \geq 0 \ \middle|\ g^{m}(x) \in W \right\}
		\end{align*}
		is finite.
		Since $h_{H}(g^{m}(x))$ goes to $\infty$ as $m \to \infty$, by (\ref{ltd:ineq3}) and by making $m_{0}$ larger if necessary, we may assume 
		\begin{align}\label{eq_est2}
			h_{H}(g^{m+2}(x)) \geq \frac{\kappa}{2} h_{H}(g^{m+1}(x)) 
		\end{align}
		for all $m \geq m_{0}$.
		
		Combining (\ref{eq_est1}) with (\ref{eq_est2}), for $m \geq m_{0} + 2$, we get
		\begin{equation}\label{eq_est_main} 
			\begin{aligned}
				&h_{H}(g^{m}(x))^{1/m} \\
				\geq \ &\left(  \left(\frac{\varepsilon \delta_{N}}{ \delta_{N-1}}\right)^{n_{1}} - \frac{2}{ \delta_{N-1}^{n_{1}}}  \right)^{ \frac{\# (\Sigma \cap [m_{0},m-2])}{m}}
				\left(\frac{\kappa}{2} \right)^{ \frac{\# ([m_{0},m-2] \setminus \Sigma)}{m}} h_{H}(g^{m_{0}+1}(x))^{1/m}.
			\end{aligned} 
		\end{equation}
		Since $\Sigma$ has natural density $1$, we have
		\begin{align*}
			&\lim_{m\to \infty} \frac{\# (\Sigma \cap [m_{0},m-2])}{m} = 1 \\
			&\lim_{m\to \infty} \frac{\# ([m_{0},m-2] \setminus \Sigma)}{m} = 0. 
		\end{align*}
		Thus taking $m \to \infty$ in (\ref{eq_est_main}), we get
		\begin{align*}
			\underline{\alpha}_{g}(x) = \liminf_{m \to \infty}h_{H}(g^{m}(x))^{1/m}  \geq \left(\frac{\varepsilon \delta_{N}}{ \delta_{N-1}}\right)^{n_{1}} - \frac{2}{ \delta_{N-1}^{n_{1}}}.
		\end{align*}
		Note that $\underline{\alpha}_{g}(x) = \underline{\alpha}_{f}(x)^{n_{1}}$ as $g = f^{n_1}$. It then follows that 
		\begin{align*}
			\underline{\alpha}_{f}(x) \geq  \left( \left(\frac{\varepsilon \delta_{N}}{ \delta_{N-1}}\right)^{n_{1}} - \frac{2}{ \delta_{N-1}^{n_{1}}} \right)^{1/n_{1}}.
		\end{align*}
		Since $\varepsilon\delta_{N} > 1$ by the choice of $\varepsilon$, 
		\begin{align*}
			\underline{\alpha}_{f}(x) \geq \lim_{n_{1} \to \infty} \left( \left(\frac{\varepsilon \delta_{N}}{ \delta_{N-1}}\right)^{n_{1}} - \frac{2}{ \delta_{N-1}^{n_{1}}} \right)^{1/n_{1}} = \frac{\varepsilon \delta_{N}}{ \delta_{N-1}}.   
		\end{align*} 
		Since $\varepsilon$ can be arbitrarily close to $1$, we get
		\[ \underline{\alpha}_{f}(x) \geq \frac{\delta_{N}}{ \delta_{N-1}}. \qedhere \]
	\end{proof}
	
	Finally we prove the following boundedness result about preperiodic points.
	
	\begin{theorem}[Theorem \ref{thm:fin-preper-largetopdeg}]
		Let $X$ be a smooth projective variety of dimension $N$ defined over $\qb$.
		Let $f \colon X \dashrightarrow X$ be an $N$-cohomologically hyperbolic dominant ratioal map.
		Then there is a non-empty Zariski open subset $U \subset X$ such that
		the set
		\begin{align*}
			\left\{ x \in U(\qb) \cap X_f(\qb) \,\middle|\, \OO_f(x) \subset U, \# \OO_f(x) < \infty \right\}
		\end{align*}
		is a set of bounded height.
	\end{theorem}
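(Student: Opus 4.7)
The plan is to reduce the claim to a one-step height inequality of the form $h_{H}(f^n(z)) \geq \kappa\, h_{H}(z) - C$ with a constant $\kappa > 1$, from which bounded height of orbits of finite cardinality follows by a standard iteration. First I would strengthen Lemma \ref{lem:bigness-largetopdeg} to the following statement: for any real number $c$ with $\delta_{N-1}(f)/\delta_{N}(f) < c < 1$ there exists an integer $n_{c} \geq 1$ such that for every $n \geq n_{c}$ the $\RR$-divisor
\[
c\, {\phi'_n}^{*} H - {\psi'_n}^{*} H
\]
is big on $X'$. This can be proved by rerunning the proof of Lemma \ref{lem:bigness-largetopdeg}: the bigness criterion for nef divisors \cite[Theorem 2.2.15]{La04} applied to the nef classes $c\, {\phi'_n}^{*} H$ and ${\psi'_n}^{*} H$ reduces the question to the single intersection inequality
\[
\bigl(c\, {\phi'_n}^{*} H\bigr)^{N} > N\bigl((c\, {\phi'_n}^{*} H)^{N-1} \cdot {\psi'_n}^{*} H\bigr),
\]
which after extracting $n$-th roots (invoking Lemma \ref{lem_limax}) becomes the strict inequality $c\, \delta_{N}(f) > \delta_{N-1}(f)$. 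Since $\delta_{N-1}(f)/\delta_{N}(f) < 1$ by hypothesis, such a $c$ strictly less than $1$ can indeed be chosen.

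Next, fix such a $c$, set $\kappa := 1/c > 1$, and fix $n \geq n_{c}$. Following verbatim the argument in Proposition \ref{prop_hinq} (functoriality of Weil height functions together with \cite[Lemma 2.26(1)]{LS21}), bigness of $c\, {\phi'_n}^{*} H - {\psi'_n}^{*} H$ translates into the existence of a non-empty Zariski open subset $U \subset X$ — taken to be the complement of $\psi'_{n}\bigl(\B_{+}(c\, {\phi'_n}^{*} H - {\psi'_n}^{*} H)\bigr)$ together with the locus where $\psi'_{n}$ fails to be an isomorphism — and a constant $C > 0$ such that
\[
h_{H}(f^n(z)) \geq \kappa\, h_{H}(z) - C
\]
for every $z \in U(\qb) \cap X_{f^n}(\qb)$. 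This $U$ will serve as the open subset asserted by the theorem.

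Finally, suppose $x \in U(\qb) \cap X_{f}(\qb)$ satisfies $\OO_{f}(x) \subset U$ and $\#\OO_{f}(x) < \infty$. Every iterate $f^{jn}(x)$ lies in $U$, and since $x \in X_{f}$ implies $x \in X_{f^n}$, the inequality applies iteratively: for each $j \geq 0$,
\[
h_{H}(f^{jn}(x)) \geq \kappa^{j}\, h_{H}(x) - \frac{C(\kappa^{j}-1)}{\kappa - 1},
\]
which rearranges to
\[
h_{H}(x) \leq \kappa^{-j} h_{H}(f^{jn}(x)) + \frac{C}{\kappa - 1}.
\]
Because $\OO_{f}(x)$ is finite, the sequence $\{h_{H}(f^{jn}(x))\}_{j}$ takes values in a finite set, so letting $j \to \infty$ gives $h_{H}(x) \leq C/(\kappa - 1)$, a bound independent of $x$.

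The crux of the argument — and the only substantive input beyond the machinery already available in the paper — is the refinement of Lemma \ref{lem:bigness-largetopdeg} that inserts a coefficient strictly less than $1$ in front of ${\phi'_n}^{*} H$. This upgrades the resulting height inequality from the trivial $\kappa = 1$ case (which gives nothing) into the useful $\kappa > 1$ regime, at which point Northcott-style iteration immediately closes the argument; the remaining care concerns excising the locus where $\psi'_{n}$ is not an isomorphism to ensure well-defined lifts, which is routine.
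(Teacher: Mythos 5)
Your proof is correct, and it takes a genuinely different and, in fact, cleaner route than the paper's. The paper deduces Theorem \ref{thm:fin-preper-largetopdeg} from two inputs: the three-term inequality of Proposition \ref{prop_hinq} (involving $h_H$ at $z$, $f^n(z)$, and $f^{2n}(z)$, with the weak coefficient $1$ in front of $h_H(f^{2n}(z))$), and the one-step inequality $h_H(f^n(y)) \geq h_H(y) - C$ coming from Lemma \ref{lem:bigness-largetopdeg}, which only gives $\kappa = 1$. These are then combined through a second-order recursion: $\alpha$ is taken to be the large root of the companion quadratic $t^2 - (\varepsilon\delta_N/\delta_{N-1})^{n_0}t + \delta_{N-1}^{-n_0}$, one defines $l_n = h_H((f^{n_0})^n(x)) - \frac{1}{\alpha\delta_{N-1}^{n_0}}h_H((f^{n_0})^{n-1}(x)) - \frac{C}{\alpha-1}$, shows $l_{n+1} \geq \alpha l_n$, and concludes $l_1 \leq 0$. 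You instead observe that Lemma \ref{lem:bigness-largetopdeg} can be sharpened at essentially no cost: since $\delta_N(f) > \delta_{N-1}(f)$ is a strict inequality, the ratio $(\phi'^*_nH^N)/\bigl(N(\phi'^*_nH^{N-1}\cdot\psi'^*_nH)\bigr)$ grows exponentially in $n$, so $c\,\phi'^*_nH - \psi'^*_nH$ is big for any fixed $c \in (0,1)$ once $n$ is large. This yields a single one-step inequality $h_H(f^n(z)) \geq \kappa h_H(z) - C$ with $\kappa = 1/c > 1$, after which the finiteness of the orbit closes the argument by elementary geometric iteration, bypassing both the three-term inequality and the quadratic recursion. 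This is shorter and conceptually more transparent. (A likely reason for the paper's choice is that Proposition \ref{prop_hinq} is already built for the arithmetic-degree bounds, so reusing it is economical at the level of the whole paper.)

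Two small imprecisions, neither affecting correctness: first, when you say that after extracting $n$-th roots the needed intersection inequality ``becomes $c\,\delta_N(f) > \delta_{N-1}(f)$,'' this is not quite right — $c^{1/n} \to 1$, so the $n$-th roots of both sides tend to $\delta_N(f)$ and $\delta_{N-1}(f)$ respectively, and the hypothesis $\delta_N > \delta_{N-1}$ alone already forces $c\,(\phi'^*_nH^N) > N(\phi'^*_nH^{N-1}\cdot\psi'^*_nH)$ for $n$ large; in particular the constraint $c > \delta_{N-1}/\delta_N$ is unnecessary (any $c \in (0,1)$ works). Second, your extra caution about excising the non-isomorphism locus of $\psi'_n$ is harmless but also unnecessary: since $\psi'_n$ is an isomorphism over $X \setminus I(f^n)$, any $z \in X_{f^n}(\qb)$ already has a canonical lift $\widetilde{z}$ with $\psi'_n(\widetilde{z}) = z$ and $\phi'_n(\widetilde{z}) = f^n(z)$, so removing the image of the augmented base locus, as in Proposition \ref{prop_hinq}, already suffices.
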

	\begin{proof}
		Fix an ample Cartier divisor $H$ on $X$ and fix an associated height function
		$h_H$ so that $h_H \geq 1$. Here we note that the assertion is independent of the choice of $H$ or $h_H$. 
		
		Write $\delta_i = \delta_i(f)$. Take $\epsilon \in (0,1)$ such that 
		\begin{align*}
			\frac{\epsilon \delta_{N}}{\delta_{N-1}} > 1.
		\end{align*}
		By Proposition \ref{prop_hinq} and Lemma \ref{lem:bigness-largetopdeg},
		as in the proof of Theorem \ref{thm:ad-largetopdeg},
		we can find a positive integer $n_0 \geq 1$, a non-empty Zariski open subset
		$U \subset X$, and a real constant $C \geq 0$ with the following properties:
		\begin{align}
			\left( \frac{\epsilon \delta_{N}}{\delta_{N-1}}\right)^{n_0} >
			1 + \frac{1}{\delta_{N-1}^{n_0}},\label{ineq:keyineq}
		\end{align}
		and
		\begin{align}
			& h_H(f^{2n_0}(y)) + \frac{h_H(y)}{\delta_{N-1}^{n_0}} 
			\geq  \left( \frac{\epsilon \delta_{N}}{\delta_{N-1}}\right)^{n_0} 
			h_H(f^{n_0}(y)) - C, \label{ineq:htineqNhyp} \\
			&h_H(f^{n_0}(y)) \geq h_H(y) - C, \label{ineq:htineqeasyNhyp}
		\end{align}
		for all $y \in U(\qb) \cap X_{f^{n_0}}(\qb)$.
		
		Now take any point $x \in U(\qb) \cap X_f(\qb)$ such that 
		$\OO_f(x) \subset U$ and $\# \OO_f(x) < \infty$.
		Then we obviously have $\OO_{f^{n_0}}(x) \subset U(\qb) \cap X_{f^{n_0}}(\qb)$.
		Let $\alpha$ be the largest root of the quadratic equation
		\begin{align*}
			t^2 - \left( \frac{\epsilon \delta_{N}}{\delta_{N-1}}\right)^{n_0} t + \frac{1}{\delta_{N-1}^{n_0}}=0.
		\end{align*}
		By (\ref{ineq:keyineq}), this equation indeed has two real roots and $\alpha > 1$. We set
		\begin{align*}
			l_n = h_H((f^{n_0})^n(x)) 
			- \frac{1}{\alpha \delta_{N-1}^{n_0}} h_H((f^{n_0})^{n-1}(x))
			- \frac{C}{\alpha -1}
		\end{align*}
		for $n \geq 1$.
		Then by (\ref{ineq:htineqNhyp}), we can show with a little calculation that
		\begin{align*}
			l_{n+1} \geq \alpha l_n
		\end{align*}
		for all $n \geq 1$.
		Hence $l_{n+1} \geq \alpha^n l_1$.
		As $\OO_{f^{n_0}}(x)$ is finite, we have
		\begin{align*}
			\lim_{n \to \infty} \frac{l_{n+1}}{\alpha^n} = 0.
		\end{align*}
		It then follows that $l_1 \leq 0$. Combining with (\ref{ineq:htineqeasyNhyp}), we obtain 
		\begin{align*}
			0 &\geq h_H(f^{n_0}(x)) - \frac{1}{\alpha \delta_{N-1}^{n_0}}h_H(x)
			- \frac{C}{\alpha -1}\\
			& \geq \left(1 - \frac{1}{\alpha \delta_{N-1}^{n_0}} \right) h_H(x)
			-C - \frac{C}{\alpha -1}, 
		\end{align*}
		or equivalently, 
		\begin{align*} 
			h_H(x) \leq \frac{\alpha C}{\alpha+1}\cdot \left(1 - \frac{1}{\alpha \delta_{N-1}^{n_0}} \right)^{-1} 
		\end{align*}
		as $\alpha \delta_{N-1}^{n_0} > 1$. Since all the occurring constants are independent of the point $x$, we are done. 
	\end{proof}

	\bibliographystyle{acm}
	\bibliography{main}

\begin{thebibliography}{10}

\bibitem{Am11}
{\sc Amerik, E.}
\newblock Existence of non-preperiodic algebraic points for a rational self-map
  of infinite order.
\newblock {\em Math. Res. Lett. 18}, 2 (2011), 251--256.

\bibitem{BMT22}
{\sc Bell, J., Moosa, R., and Topaz, A.}
\newblock Invariant hypersurfaces.
\newblock {\em J. Inst. Math. Jussieu 21}, 2 (2022), 713--739.

\bibitem{BDJ20}
{\sc Bell, J.~P., Diller, J., and Jonsson, M.}
\newblock A transcendental dynamical degree.
\newblock {\em Acta Math. 225}, 2 (2020), 193--225.

\bibitem{BDJK21}
{\sc Bell, J.~P., Diller, J., Jonsson, M., and Krieger, H.}
\newblock Birational maps with transcendental dynamical degree.
\newblock {\em Proc. Lond. Math. Soc. (3) 128}, 1 (2024), Paper No. e12573, 47.

\bibitem{BGT10}
{\sc Bell, J.~P., Ghioca, D., and Tucker, T.~J.}
\newblock The dynamical {M}ordell-{L}ang problem for \'{e}tale maps.
\newblock {\em Amer. J. Math. 132}, 6 (2010), 1655--1675.

\bibitem{BGT15}
{\sc Bell, J.~P., Ghioca, D., and Tucker, T.~J.}
\newblock The dynamical {M}ordell-{L}ang problem for {N}oetherian spaces.
\newblock {\em Funct. Approx. Comment. Math. 53}, 2 (2015), 313--328.

\bibitem{BHS20}
{\sc Bell, J.~P., Hu, F., and Satriano, M.}
\newblock Height gap conjectures, {$D$}-finiteness, and a weak dynamical
  {M}ordell-{L}ang conjecture.
\newblock {\em Math. Ann. 378}, 3-4 (2020), 971--992.

\bibitem{BM08}
{\sc Bierstone, E., and Milman, P.~D.}
\newblock Functoriality in resolution of singularities.
\newblock {\em Publ. Res. Inst. Math. Sci. 44}, 2 (2008), 609--639.

\bibitem{BHT22}
{\sc Bisi, C., Hauenstein, J.~D., and Truong, T.~T.}
\newblock Some interesting birational morphisms of smooth affine quadric
  {$3$}-folds.
\newblock {\em arXiv:2208.14327v2\/} (2022).

\bibitem{BG06}
{\sc Bombieri, E., and Gubler, W.}
\newblock {\em Heights in {D}iophantine geometry}, vol.~4 of {\em New
  Mathematical Monographs}.
\newblock Cambridge University Press, Cambridge, 2006.

\bibitem{Ca10}
{\sc Cantat, S.}
\newblock Invariant hypersurfaces in holomorphic dynamics.
\newblock {\em Math. Res. Lett. 17}, 5 (2010), 833--841.

\bibitem{CLO22}
{\sc Chen, J.-A., Lin, H.-Y., and Oguiso, K.}
\newblock On the {K}awaguchi--{S}ilverman {C}onjecture for birational
  automorphisms of irregular varieties.
\newblock {\em arXiv:2204.09845v2\/} (2022).

\bibitem{CG04}
{\sc Coman, D., and Guedj, V.}
\newblock Invariant currents and dynamical {L}elong numbers.
\newblock {\em J. Geom. Anal. 14}, 2 (2004), 199--213.

\bibitem{Da20}
{\sc Dang, N.-B.}
\newblock Degrees of iterates of rational maps on normal projective varieties.
\newblock {\em Proc. Lond. Math. Soc. (3) 121}, 5 (2020), 1268--1310.

\bibitem{DF01}
{\sc Diller, J., and Favre, C.}
\newblock Dynamics of bimeromorphic maps of surfaces.
\newblock {\em Amer. J. Math. 123}, 6 (2001), 1135--1169.

\bibitem{FaWu12}
{\sc Favre, C., and Wulcan, E.}
\newblock Degree growth of monomial maps and {M}c{M}ullen's polytope algebra.
\newblock {\em Indiana Univ. Math. J. 61}, 2 (2012), 493--524.

\bibitem{Gu05}
{\sc Guedj, V.}
\newblock Ergodic properties of rational mappings with large topological
  degree.
\newblock {\em Ann. of Math. (2) 161}, 3 (2005), 1589--1607.

\bibitem{Gu10}
{\sc Guedj, V.}
\newblock Propri\'{e}t\'{e}s ergodiques des applications rationnelles.
\newblock In {\em Quelques aspects des syst\`emes dynamiques polynomiaux},
  vol.~30 of {\em Panor. Synth\`eses}. Soc. Math. France, Paris, 2010,
  pp.~97--202.

\bibitem{GS02}
{\sc Guedj, V., and Sibony, N.}
\newblock Dynamics of polynomial automorphisms of {$\mathbf{C}^k$}.
\newblock {\em Ark. Mat. 40}, 2 (2002), 207--243.

\bibitem{HS00}
{\sc Hindry, M., and Silverman, J.~H.}
\newblock {\em Diophantine geometry}, vol.~201 of {\em Graduate Texts in
  Mathematics}.
\newblock Springer-Verlag, New York, 2000.
\newblock An introduction.

\bibitem{Ii82}
{\sc Iitaka, S.}
\newblock {\em Algebraic geometry}, vol.~24 of {\em North-Holland Mathematical
  Library}.
\newblock Springer-Verlag, New York-Berlin, 1982.
\newblock An introduction to birational geometry of algebraic varieties.

\bibitem{JSXZ21}
{\sc Jia, J., Shibata, T., Xie, J., and Zhang, D.-Q.}
\newblock Endomorphisms of quasi-projective varieties--towards {Z}ariski dense
  orbit and {K}awaguchi-{S}ilverman conjectures.
\newblock {\em To appear on Math. Res. Lett.\/} (2021).
\newblock arXiv:2104.05339.

\bibitem{JR18}
{\sc Jonsson, M., and Reschke, P.}
\newblock On the complex dynamics of birational surface maps defined over
  number fields.
\newblock {\em J. Reine Angew. Math. 744\/} (2018), 275--297.

\bibitem{KPR16}
{\sc Kaschner, S.~R., P\'{e}rez, R.~A., and Roeder, R. K.~W.}
\newblock Examples of rational maps of {$\mathbb{C}\mathbb{P}^2$} with equal
  dynamical degrees and no invariant foliation.
\newblock {\em Bull. Soc. Math. France 144}, 2 (2016), 279--297.

\bibitem{KS16b}
{\sc Kawaguchi, S., and Silverman, J.~H.}
\newblock Dynamical canonical heights for {J}ordan blocks, arithmetic degrees
  of orbits, and nef canonical heights on abelian varieties.
\newblock {\em Trans. Amer. Math. Soc. 368}, 7 (2016), 5009--5035.

\bibitem{KS16a}
{\sc Kawaguchi, S., and Silverman, J.~H.}
\newblock On the dynamical and arithmetic degrees of rational self-maps of
  algebraic varieties.
\newblock {\em J. Reine Angew. Math. 713\/} (2016), 21--48.

\bibitem{Lan83}
{\sc Lang, S.}
\newblock {\em Fundamentals of {D}iophantine geometry}.
\newblock Springer-Verlag, New York, 1983.

\bibitem{La04}
{\sc Lazarsfeld, R.}
\newblock {\em Positivity in algebraic geometry. {I}}, vol.~48 of {\em
  Ergebnisse der Mathematik und ihrer Grenzgebiete. 3. Folge. A Series of
  Modern Surveys in Mathematics [Results in Mathematics and Related Areas. 3rd
  Series. A Series of Modern Surveys in Mathematics]}.
\newblock Springer-Verlag, Berlin, 2004.
\newblock Classical setting: line bundles and linear series.

\bibitem{LS20}
{\sc Lesieutre, J., and Satriano, M.}
\newblock A rational map with infinitely many points of distinct arithmetic
  degrees.
\newblock {\em Ergodic Theory Dynam. Systems 40}, 11 (2020), 3051--3055.

\bibitem{LS21}
{\sc Lesieutre, J., and Satriano, M.}
\newblock Canonical heights on hyper-{K}\"{a}hler varieties and the
  {K}awaguchi-{S}ilverman conjecture.
\newblock {\em Int. Math. Res. Not. IMRN}, 10 (2021), 7677--7714.

\bibitem{Li12}
{\sc Lin, J.-L.}
\newblock Pulling back cohomology classes and dynamical degrees of monomial
  maps.
\newblock {\em Bull. Soc. Math. France 140}, 4 (2012), 533--549.

\bibitem{Ma20}
{\sc Matsuzawa, Y.}
\newblock On upper bounds of arithmetic degrees.
\newblock {\em Amer. J. Math. 142}, 6 (2020), 1797--1820.

\bibitem{MMSZ20}
{\sc Matsuzawa, Y., Meng, S., Shibata, T., and Zhang, D.-Q.}
\newblock Non-density of points of small arithmetic degrees.
\newblock {\em J. Geom. Anal. 33}, 4 (2023), Paper No. 112, 41.

\bibitem{MSS18}
{\sc Matsuzawa, Y., Sano, K., and Shibata, T.}
\newblock Arithmetic degrees and dynamical degrees of endomorphisms on
  surfaces.
\newblock {\em Algebra Number Theory 12}, 7 (2018), 1635--1657.

\bibitem{OT15}
{\sc Oguiso, K., and Truong, T.~T.}
\newblock Explicit examples of rational and {C}alabi-{Y}au threefolds with
  primitive automorphisms of positive entropy.
\newblock {\em J. Math. Sci. Univ. Tokyo 22}, 1 (2015), 361--385.

\bibitem{Si14}
{\sc Silverman, J.~H.}
\newblock Dynamical degree, arithmetic entropy, and canonical heights for
  dominant rational self-maps of projective space.
\newblock {\em Ergodic Theory Dynam. Systems 34}, 2 (2014), 647--678.

\bibitem{Tr20}
{\sc Truong, T.~T.}
\newblock Relative dynamical degrees of correspondences over a field of
  arbitrary characteristic.
\newblock {\em J. Reine Angew. Math. 758\/} (2020), 139--182.

\bibitem{Wa22}
{\sc WANG, L.}
\newblock {Periodic points and arithmetic degrees of certain rational
  self-maps}.
\newblock {\em Journal of the Mathematical Society of Japan\/} (2023).
\newblock arXiv:2201.12750v4.

\bibitem{Xi22}
{\sc Xie, J.}
\newblock The existence of {Z}ariski dense orbits for endomorphisms of
  projective surfaces.
\newblock {\em Journal of the American Mathematical Society\/} (2022).
\newblock arXiv:1905.07021v3.

\end{thebibliography}

\end{document}